\numberwithin{equation}{section}
\theoremstyle{plain}
\newtheorem{thm}{Theorem}[section]
\newtheorem{defn}[thm]{Definition}
\newtheorem{prop}[thm]{Proposition}
\newtheorem{lem}[thm]{Lemma}
\newtheorem{cor}[thm]{Corollary}
\theoremstyle{definition}
\newtheorem{rem}[thm]{Remark}
\newcommand{\beast}{\begin{eqnarray*}}
\newcommand{\east}{\end{eqnarray*}}
\newcommand{\N}{{\Bbb N}}
\newcommand{\Z}{{\Bbb Z}}
\newcommand{\Q}{{\Bbb Q}}
\newcommand{\Af}{{\Bbb A}}
\renewcommand{\Pr}{{\Bbb P}}
\newcommand{\A}{{\bold A}}
\renewcommand{\b}{{\bold b}}
\newcommand{\Spec}{{\mathrm{Spec}}\,}
\newcommand{\Spf}{{\mathrm{Spf}}\,}
\newcommand{\Zar}{{\mathrm{Zar}}}
\newcommand{\lra}{\longrightarrow}
\newcommand{\lla}{\longleftarrow}
\newcommand{\wt}[1]{\widetilde{#1}}
\newcommand{\ul}[1]{\underline{#1}}
\newcommand{\ol}[1]{\overline{#1}}
\newcommand{\os}{\overset}
\newcommand{\et}{{\mathrm{et}}}
\newcommand{\crys}{{\mathrm{crys}}}
\newcommand{\Cor}{{\mathrm{Cor}}}
\newcommand{\Fil}{{\mathrm{Fil}}}
\newcommand{\id}{{\mathrm{id}}}
\newcommand{\Inthom}{{\cal H}{\mathrm{om}}}
\newcommand{\pr}{{\mathrm{pr}}}
\newcommand{\dlog}{{\mathrm{dlog}}\,}
\newcommand{\Frac}{{\mathrm{Frac}}\,}
\newcommand{\cD}{{\cal D}}
\newcommand{\cE}{{\cal E}}
\newcommand{\cF}{{\cal F}}
\newcommand{\cH}{{\cal H}}
\newcommand{\cI}{{\cal I}}
\newcommand{\cO}{{\cal O}}
\newcommand{\cP}{{\cal P}}
\newcommand{\cQ}{{\cal Q}}
\newcommand{\cU}{{\cal U}}
\newcommand{\cX}{{\cal X}}
\newcommand{\cY}{{\cal Y}}
\newcommand{\fX}{{\mathfrak{X}}}
\newcommand{\fY}{{\mathfrak{Y}}}
\newcommand{\Bl}{{\mathrm{Bl}}}
\renewcommand{\tt}{{\bold t}}
\newcommand{\x}{{\bold x}}
\newcommand{\op}{{\mathrm{op}}}
\renewcommand{\wt}{\widetilde}
\newcommand{\e}{{\bold e}}
\newcommand{\MCorL}{\ul{\rm M}\Cor^{\Lambda}}
\newcommand{\MCorQ}{\ul{\rm M}\Cor^{\Q}}
\newcommand{\MCorLls}{\ul{\rm M}\Cor^{\Lambda, {\rm ls}}}
\newcommand{\MCorQls}{\ul{\rm M}\Cor^{\Q, {\rm ls}}}
\newcommand{\MCorZls}{\ul{\rm M}\Cor^{\Z, {\rm ls}}}
\newcommand{\Sm}{{\rm Sm}}
\newcommand{\Ab}{{\rm Ab}}
\newcommand{\Nis}{{\rm Nis}}
\renewcommand{\1}{{\bold 1}}
\renewcommand{\c}{{\bold c}}
\newcommand{\MDMeff}{\ul{\rm M}{\rm DM}^{\rm eff}}
\newcommand{\bMDWn}{\ul{\bf M}{\bf W}_n{\bf \Omega}^i}
\begin{document}
\title{Blow-up invariance for Hodge-Witt sheaves with modulus}
\author{Atsushi Shiho
\footnote{
Graduate School of Mathematical Sciences, 
University of Tokyo, 3-8-1 Komaba, Meguro-ku, Tokyo 153-8914, JAPAN. 
E-mail address: shiho@ms.u-tokyo.ac.jp \, 
Mathematics Subject Classification (2020): 14F30.}}
\date{}
\maketitle

\begin{abstract}
In this paper, we prove the blow-up invariance for Hodge-Witt sheaves with modulus, which is a generalization of a result of 
Koizumi for Witt sheaves and that of Kelly-Miyazaki and Koizumi for 
Hodge sheaves. As a consequence, we obtain the representability of 
Hodge-Witt sheaves with modulus in the category of motives with modulus under 
the assumption of resolution of singularities. 
\end{abstract}

\tableofcontents

\section{Introduction: modulus pairs}


The purpose of this article is to define the Hodge-Witt sheaves 
with modulus which are compatible with the Hodge sheaves with modulus 
by Kelly-Miyazaki \cite{KM23a}, \cite{KM23b} and Koizumi \cite{K23} 
and the Witt sheaves with modulus by Koizumi \cite{K23}, 
and prove the blow-up invariance of it. 

To explain our result precisely, first 
we recall several definitions concerning modulus pairs 
of Kahn-Miyazaki-Saito-Yamazaki \cite{KMSY21a}, \cite{KMSY21b}, \cite{KMSY22}, 
and recall the result of Koizumi \cite{K23} 
on the blow-up invariance of certain Nisnevich sheaves. 

Throughout the article, let $k$ be a perfect field. 
Let $\Sm$ be the category of smooth $k$-schemes. For 
$X, Y \in \Sm$, let $\Cor(X,Y)$ be the group of algebraic cycles 
on $X \times Y$ whose components are finite over $X$ and dominate some 
irreducible component of $X$. Let $\Cor$ be the category whose set of objects 
is the same as that of $\Sm$ and whose set of morphisms for 
$X, Y \in \Sm$ is given by $\Cor(X,Y)$. 
A morphism $f: X \lra Y$ in $\Sm$ is regarded as 
an element in $\Cor(X,Y)$ by identifying it with its graph. 
Also, a finite morphism $f: X \lra Y$ defines an 
element ${}^tf$ in $\Cor(Y,X)$ which is the transpose 
of the graph of $f$. 
For a presheaf $F:\Cor^{\op} \lra \Ab$ on 
$\Cor$ and a finite morphism $f: X \lra Y$, we denote the map 
$F(X) \lra F(Y)$ associated to ${}^tf$ by $f_*$ or ${\rm Tr}_{X/Y}$, 
and call it the pushforward map or the trace map associated to $f$. 

A presheaf $F:\Cor^{\op} \lra \Ab$ on $\Cor$ is called a Nisnevich sheaf 
if for any $X \in \Sm$, the presheaf 
$$ F_X: X^{\op}_{\Nis} \lra \Ab; \quad U \mapsto F(U) $$
is a sheaf, where $\Nis$ denotes the Nisnevich site. 

It is known that, for any $i \in \N$, 
the Hodge presheaf $X \mapsto \Omega^i_X$
on $\Sm$ is a Nisnevich sheaf $\Omega^i$ 
on $\Cor$ (\cite[Thm.~A.4.1]{KSY16}), which we call the 
Hodge sheaf. 
Also, when $k$ has characteristic $p>0$, 
it is known that, for any $n, i \in \N$, the Hodge-Witt presheaf 
$X \mapsto W_n\Omega^i_X$ on $\Sm$ is a Nisnevich sheaf 
$W_n\Omega^i$ 
on $\Cor$ (\cite[I Prop.1.14]{I79} and \cite[Thm.~B.2.2]{KSY16}), which 
we call the Hodge-Witt sheaf. 
We denote $\Omega^0, W_n\Omega^0$ also by 
$\cO, W_n\cO$ respectively, and we call $W_n\cO$ the Witt sheaf. 

For $\Lambda \in \{\Z,\Q\}$, a $\Lambda$-modulus pair 
is a pair $\fX = (X,D_X)$ consisting of an algebraic $k$-scheme and 
an effective $\Lambda$-Cartier divisor $D_X$ on $X$ such that 
$X^{\circ} := X \setminus |D_X|$ is smooth. 
The $\Lambda$-modulus pair $\ol{\square} := (\Pr^1_k, \{ \infty \})$ is called 
the cube. For $\Lambda$-modulus pairs $\fX, \fY$, let 
$\MCorL(\fX,\fY)$ be the subgroup of 
$\Cor(X^{\circ},Y^{\circ})$ consisting of cycles 
any of whose component $V$ satisfies the 
following conditions: 
\begin{itemize}
\item
The closure $\ol{V}$ of $V$ in $X \times Y$ is proper over $X$. 
\item The inequality of $\Lambda$-Cartier divisors 
$(\pr_1^*D_X)|_{\ol{V}^{\rm N}} \geq 
(\pr_2^*D_Y)|_{\ol{V}^{\rm N}}
$ holds, where $\pr_1: X \times Y \lra X, \pr_2: X \times Y \lra Y$ 
are projections and $\ol{V}^{\rm N}$ is the normalization of $\ol{V}$. 
\end{itemize}
Then, let $\MCorL$ be the category 
whose objects are $\Lambda$-modulus pairs and 
whose set of morphisms for $\Lambda$-modulus pairs 
$\fX, \fY$ is given by $\MCorL(\fX,\fY)$. 
For $\fX, \fY \in \MCorL(\fX,\fY)$, we denote the 
$\Lambda$-modulus pair 
$(X \times Y, \pr_1^*D_X + \pr_2^*D_Y)$ by 
$\fX \otimes \fY$.

An ambient morphism between $\Lambda$-modulus pairs 
$f: \fX \lra \fY$ is defined to be a morphism $f: X \lra Y$ of 
$k$-schemes with $f(X^{\circ}) \subseteq Y^{\circ}$, 
$D_X \geq f^*D_Y$. 
It is called minimal if it satisfies $D_X = f^*D_Y$ in addition. 
An ambient morphism $f$ is regarded as 
a morphism in $\MCorL$ by identifying it with its graph. 
Also, a finite minimal ambient morphism $f:\fX \lra \fY$ defines 
an element ${}^tf$ in $\MCorL(\fY,\fX)$ which is the transpose 
of the graph of $f$.

A presheaf $F:(\MCorL)^{\op} \lra \Ab$ on $\MCorL$ is called a Nisnevich sheaf 
if for any $\fX \in \MCorL$, the presheaf 
$$ F_{\fX}: X^{\op}_{\Nis} \lra \Ab; \quad U \mapsto F(U,D_X|_U) $$
is a sheaf.

We call a $\Lambda$-modulus pair $\fX = (X,D_X)$ log smooth 
if $X$ is smooth and $|D_X|$ is a simple normal crossing divisor on $X$, 
and let $\MCorLls$ be the full subcategory of 
$\MCorL$ consisting of log smooth $\Lambda$-modulus pairs. 
For $\fX \in \MCorLls$, we say that 
a closed subscheme $Z$ in $|D_X|$ has simple normal crossing with $|D_X|$ 
if, Zariski locally on $X$, there is a coordinate 
$x_1, \dots, x_n$ and $1 \leq a \leq b \leq c \leq n$ such that 
$|D_X| = \{ x_1x_2 \cdots x_b = 0\}, Z = \{x_1 = \cdots = x_a = 
x_{b+1} = \cdots x_c = 0\}$. In this situation, we can form the blow up 
$\varphi: \Bl_Z(\fX) \lra \fX$, where 
$\Bl_Z(\fX) = (\Bl_Z(X), D_X|_{\Bl_Z(X)})$. 

Then one of the main result of \cite{K23} is as follows: 

\begin{thm}[{Koizumi \cite[Thm.~1.4]{K23}}]\label{thm:koizumi1} 
Let $F$ be a Nisnevich sheaf on $\MCorQ$ satisfying the following three conditions$:$ \\
$(1)($Cohomological cube invariance$)$ \, 
For $\fX \in \MCorQls$, the morphism $R\Gamma(X,F_{\fX}) \lra 
R\Gamma(X \times \Pr^1, F_{\fX \otimes \ol{\square}})$ is a quasi-isomorphism. \\
$(2)($Affine vanishing property$)$ \, 
For any effective divisors $E_1, \dots, E_n$ on $\Af^1$, 
$\fX \in \MCorQls$ and $i>0$, 
$$ R^i\pr_{1,*}F_{\fX \otimes (\A^1,E_1) \otimes \cdots \otimes 
(\A^1,E_n)} = 0. $$ 
$(3)($Left continuity$)$ \, For any $\fX \in \MCorQls$, 
The canonical map 
$\varinjlim_{\epsilon \to 0}F(X,(1-\epsilon)D_X) \lra 
F(\fX)$ is an isomorphism. \\
Then, for any object $\fX \in \MCorQls$ and a closed subscheme $Z$ of 
$|D_X|$ which has simple normal crossing with $|D_X|$, the morphism 
$$ R\Gamma(X,F_{\fX}) \lra R\Gamma(\Bl_ZX, F_{\Bl_Z\fX}) $$
is an isomophism. 
\end{thm}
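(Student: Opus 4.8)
The plan is to pass to the Leray spectral sequence. Since $\varphi:\Bl_Z X\to X$ is proper, $R\Gamma(\Bl_Z X,F_{\Bl_Z\fX})\simeq R\Gamma(X,R\varphi_*F_{\Bl_Z\fX})$, and the morphism of the statement is $R\Gamma(X,-)$ applied to the natural map $\alpha:F_\fX\to R\varphi_*F_{\Bl_Z\fX}$ adjoint to the pullbacks along the minimal ambient morphisms $\varphi^{-1}(U)\to U$ ($U\subseteq X$ open); so it suffices to prove that $\alpha$ is a quasi-isomorphism of complexes of Nisnevich sheaves on $X$ (I write $R\Gamma(\fX)$ for $R\Gamma(X,F_\fX)$ from now on). This is local on $X$ for the Nisnevich topology; since $\varphi$ is an isomorphism over $X\setminus Z$, and since the formation of $\alpha$ is compatible with étale base change (for $\pi:X'\to X$ étale one has $F_{\fX'}=\pi^*F_\fX$ and $\Bl_{\pi^{-1}Z}X'=X'\times_X\Bl_Z X$), the hypothesis that $(X,|D_X|,Z)$ is Zariski-locally in standard coordinates reduces us to the standard local model $X=\A^n_k$, $D_X=\sum_{i=1}^b n_i\{x_i=0\}$ with $n_i\in\Q_{>0}$, $Z=\{x_i=0:i\in I\}$ where $I=\{1,\dots,a\}\cup\{b+1,\dots,c\}$.

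Next, let $r$ be the codimension of $Z$ and split the coordinates of $\A^n$ into the $r$ ones indexed by $I$ and the remaining $n-r$. Then $\A^n=\A^{n-r}\times\A^r$ with $Z=\A^{n-r}\times\{0\}$, hence $\Bl_Z\A^n=\A^{n-r}\times\Bl_0\A^r$, and matching up divisor components gives $\fX=\fY\otimes\fX_0$, $\Bl_Z\fX=\fY\otimes\Bl_0\fX_0$ and $\varphi=\id_Y\times\beta$, where $\fY=(\A^{n-r},\sum_{a<i\leq b}n_i\{x_i=0\})\in\MCorQls$, $\fX_0=(\A^r,\sum_{1\leq i\leq a}n_i\{x_i=0\})$ and $\beta:\Bl_0\A^r\to\A^r$. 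So it is enough to prove, for every $\fY\in\MCorQls$, that $R\Gamma(\fY\otimes\fX_0)\to R\Gamma(\fY\otimes\Bl_0\fX_0)$ is an isomorphism. The relevant geometry is that $\rho:\Bl_0\A^r\to\Pr^{r-1}$ is a line bundle, namely the total space of $\cO_{\Pr^{r-1}}(-1)$, whose zero section is the exceptional divisor $E_0$, and that the pulled-back modulus equals $mE_0+\rho^*(\sum_{1\leq i\leq a}n_i\ol{H}_i)$ with $m=\sum_{i=1}^a n_i$ and $\ol{H}_i\subseteq\Pr^{r-1}$ the coordinate hyperplanes.

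For this model computation I would push forward along $\rho$ first: Zariski-locally on $\Pr^{r-1}$ the bundle $\rho$ is a product, and there the modulus is the pullback of a divisor from the base plus $m\{0\}$ in the fibre $\A^1$, so the affine vanishing property (condition (2)) shows $R\rho_*F_{\Bl_0\fX_0}$ is concentrated in degree $0$; likewise $R\Gamma(\fX_0)\simeq F(\fX_0)$ by condition (2), $\A^r$ being an iterated $(\A^1,\bullet)$-bundle over $\Spec k$. It remains to show $F(\fX_0)\to R\Gamma(\Pr^{r-1},\rho_*F_{\Bl_0\fX_0})$ is an isomorphism, for which I would cover $\Pr^{r-1}$ by its standard affine charts, run Mayer--Vietoris, evaluate each chart and intersection by condition (2), and collapse the projective fibre directions by building up $\Pr^{r-1}$ (and the modulus on it) from $\Pr^1$'s and $\ol{\square}$-type pairs, to which the cohomological cube invariance (condition (1)) applies, inducting on $r$. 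Because the divisor on those $\Pr^1$-fibres is weighted by the $n_i$ and not of the form $\{\infty\}$, and because $E_0$ occurs with the possibly large multiplicity $m$, one first bootstraps from conditions (1) and (2) a more flexible invariance — e.g.\ that $R\Gamma(\fX)\to R\Gamma(\fX\otimes(\Pr^1,n_0\{0\}+n_\infty\{\infty\}))$ is an isomorphism for all admissible weights — and to run this one works throughout with $D_X$ replaced by $(1-\epsilon)D_X$, so that the modulus inequalities needed for the auxiliary section and projection correspondences become strict, and then passes to the colimit over $\epsilon\in\Q\cap(0,1)$, applying left continuity (condition (3)) on both sides; this is legitimate since $|(1-\epsilon)D_X|=|D_X|$, so the perturbed data is still within the theorem's hypotheses, and since the colimit commutes with $R\Gamma$ and with the proper pushforward $R\varphi_*$.

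The main obstacle is precisely this last step: establishing the bootstrapped cube invariance and then controlling the modulus $\varphi^*D_X$ throughout the reduction — checking at every stage that the modulus pairs that occur (the charts of $\Bl_0\A^r$ and of $\Pr^{r-1}$, their intersections, and the successive bases of the bundle decompositions) are still log smooth and in a form to which conditions (1) and (2) literally apply, and that each comparison map arises from an honest morphism, or the transpose of a finite minimal ambient morphism, in $\MCorQ$, which ultimately reduces to verifying modulus inequalities on normalizations of the supports of the correspondences involved. The subtlest point inside this is the coordination with left continuity: that the $\epsilon$-perturbation remains compatible with the chart-by-chart and bundle-by-bundle arguments and that all the colimits in play interchange with the spectral sequences used.
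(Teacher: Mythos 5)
First, a remark on the comparison itself: the paper does not prove this statement. Theorem \ref{thm:koizumi1} is quoted verbatim from Koizumi \cite[Thm.~1.4]{K23} and used as a black box, so there is no in-paper proof to measure your argument against; I can only assess the proposal on its own terms.

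Your opening reductions are sound and are of the same general flavour as the arguments of Kelly--Miyazaki and Koizumi: localizing the cone of $F_{\fX}\to R\varphi_*F_{\Bl_Z\fX}$ in the Nisnevich topology, passing to the standard coordinate model, splitting off the directions transverse to $Z$ so that only $\Bl_0\Af^r\to\Af^r$ remains, identifying $\Bl_0\Af^r$ with the total space of $\cO_{\Pr^{r-1}}(-1)$ carrying the modulus $mE_0+\rho^*(\sum_i n_i\ol{H}_i)$ with $m=\sum_{i\le a}n_i$, and using condition (2) to kill $R^{>0}\rho_*$ chart by chart.

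The genuine gap is the endgame, and it is not merely a deferred technicality. The ``more flexible invariance'' you propose to bootstrap --- that $R\Gamma(\fX)\to R\Gamma(\fX\otimes(\Pr^1,n_0\{0\}+n_\infty\{\infty\}))$ is an isomorphism for all admissible weights --- is false, and in particular cannot follow from conditions (1)--(3): the sheaf $\ul{\rm M}\cO$ satisfies (1)--(3) by Theorem \ref{thm:koizumi2}, yet $\ul{\rm M}\cO(\Pr^1,2\{\infty\})=\Gamma(\Pr^1,\cO(1))=k\oplus kx\ne k=\ul{\rm M}\cO(\Spec k,\emptyset)$. Since you invoke this invariance precisely to absorb the exceptional multiplicity $m=\sum_{i\le a}n_i$, which is exactly a ``large weight'' on a $\Pr^1$-direction, the proposed mechanism breaks at the one place it is needed. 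Relatedly, after applying $R\rho_*$ the object on $\Pr^{r-1}$ is no longer of the form $F_{\fP}$ for a modulus pair, so conditions (1) and (2) do not literally apply to it, and the Mayer--Vietoris comparison maps between charts have not been exhibited as morphisms (or transposes of finite minimal ambient morphisms) in $\MCorQ$. The actual content of the theorem is the global cancellation between the weight $m$ on $E_0$ and the weights $n_i$ on the strict transforms, which is not captured by any fibrewise $\Pr^1$-invariance. As it stands the proposal is an accurate reduction to an unproved --- and, in the form stated, false --- sub-claim, so it does not constitute a proof.
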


We call the property in the conclusion of the above theorem 
the blow-up invariance for $F$. 

Next we recall a way to construct a Nisnevich sheaf on 
$\MCorQ$ from a Nisnevich sheaf on $\Cor$, which is also due to 
Koizumi \cite{K23}. 
We say a henselian discrete valuation field $(L,v_L)$ isomorphic to 
${\rm Frac}\,\cO_{X,x}^{\rm h}$ for some 
$X \in \Sm$ and a codimension $1$ point $x$ of $X$ a geometric henselian 
discrete valuation field, and denote the set of geometric henselian 
discrete valuation fields by $\Phi$. 
For $L \in \Phi$, let $O_L$ be its valuation ring. 
For a Nisnevich sheaf $F$ on $\Cor$ and $L \in \Phi$, we can consider 
the value $F(L), F(O_L)$ at $L$ and at $O_L$ because $\Spec L, \Spec O_L$ 
can be written as the filtered projective limit of a system of 
objects in $\Sm$ (and it is easy to see the independence of the choice of the projective system). Also, 
for any finite extension $L \subseteq L'$ of objects in $\Phi$, 
we can define the trace morphism ${\rm Tr}_{L'/L}: F(L') \lra F(L)$ because 
the morphism $\Spec L' \lra \Spec L$ can be also written as the 
filtered projective limit of a system of 
finite morphisms $X' \lra X$ in $\Sm$ with 
$\Spec L \times_X X' = \Spec L'$. (See the beginning of Section 2 for the detail.) 

A ramification filtration $\Fil$ 
on a Nisnevich sheaf $F$ on $\Cor$ is 
a collection of increasing filtations $\{\Fil_r F(L)\}_{r \in \Q_{\geq 0}}$ on 
$F(L)$ for all $L \in \Phi$ such that the following two conditions are 
satisfied: 
\begin{itemize}
\item For all $L \in \Phi$, ${\rm Im}(F(O_L) \lra F(L)) \subseteq 
\Fil_0F(L)$. 
\item For any finite extension $L \subseteq L'$ of objects in $\Phi$ 
with ramification index $e$, ${\rm Tr}_{L'/L}(\Fil_r F(L')) \subseteq 
\Fil_{r/e}(F(L))$. 
\end{itemize}
Then, for $\fX \in \MCorQ$ we say that an element $a \in F(X^{\circ})$ 
is bounded by $D_X$ if, for any $L \in \Phi$ and any commutative 
diagram over $k$ 
\begin{equation}\label{eq:ramdiag}
\xymatrix{
\Spec L \ar[r]^-{\rho} \ar[d] & X^{\circ} \ar[d] \\
\Spec O_L \ar[r]^-{\tilde{\rho}} & X, 
}
\end{equation}
$\rho^*(a)$ belongs to $\Fil_{v_L(\tilde{\rho}^*D_X)}F(L)$. 
Then we define $F_{\Fil}(\cX)$ to be the set of sections in 
$F(X^{\circ})$ which are bounded by $D_X$. 
Then Koizumi proved that $F_{\Fil}$ is a Nisnevich sheaf on 
$\MCorQ$ (\cite[Lem.~2.5]{K23}). 

We recall two examples of the above construction which are also 
given in \cite{K23}. 

\begin{prop}\label{prop:koizumi}
$(1)$ {\rm \cite[Def.~4.1, Lem.~4.4]{K23}} \, 
Let $i \in \N$. Then the collection of increasing filtations 
$\{\Fil_r \Omega^i(L)\}_{r \in \Q_{\geq 0}}$ on $\Omega^i(L)$ for $L \in \Phi$ 
defined by 
\[ 
\Fil_r\Omega^i(L) = 
\left\{ 
\begin{aligned}
& \Omega^i(O_L) & (r=0) \\
& t^{-\lceil r \rceil + 1} \Omega^i(O_L)(\log t) & (r>0) 
\end{aligned}
\right. 
\] 
$($where $t$ denotes a uniformizer of $O_L)$ defines 
a ramification filtration on the Hodge sheaf $\Omega^i$. 
We denote the associated sheaf $(\Omega^i)_{\Fil}$ on $\MCorQ$ by 
$\ul{\rm M}\Omega^i$. \\
$(2)$ {\rm \cite[Def.~4.8, Lem.~4.11]{K23}} \, 
Let $n \in \N$. Then the collection of increasing filtations 
$\{\Fil_r W_n\cO(L)\}_{r \in \Q_{\geq 0}}$ on $W_n\cO(L)$ for $L \in \Phi$ 
defined by 
\[ 
\Fil_rW_n\cO(L) = 
\left\{ 
\begin{aligned}
& W_n(O_L) & (r=0) \\
& 
\{a \in W_n(L) \,|\, \ul{t}^{\lceil r \rceil -1}F^{n-1}(a) \in W_n(O_L)\} & 
(r>0)
\end{aligned}
\right. 
\] 
$($where $t$ is a uniformizer of $O_L$, $\ul{t}$ is the Teichm\"ller lift 
of $t$ in $W_n(O_L)$ and $F$ is the Frobenius operator on $W_n(L))$ 
defines a ramification filtration on the sheaf $W_n\cO$. 
We denote the associated sheaf $(W_n\cO)_{\Fil}$ on $\MCorQ$ by 
$\ul{\rm M}W_n\cO$. 
\end{prop}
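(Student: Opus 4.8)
The plan is to verify, separately for each of the two cases, that the stated collection $\{\Fil_r\}_{r\in\Q_{\geq 0}}$ is (i) an increasing filtration on $F(L)$ for every $L\in\Phi$, (ii) compatible with the image of $F(O_L)$ in the sense that $\im(F(O_L)\to F(L))\subseteq \Fil_0 F(L)$, and (iii) compatible with traces under finite extensions $L\subseteq L'$ of ramification index $e$, i.e. $\tr_{L'/L}(\Fil_r F(L'))\subseteq \Fil_{r/e}F(L)$. Points (i) and (ii) are essentially immediate from the definitions: for $F=\Omega^i$ the inclusions $\Omega^i(O_L)\subseteq t^{-\lceil r\rceil +1}\Omega^i(O_L)(\log t)\subseteq t^{-\lceil r'\rceil +1}\Omega^i(O_L)(\log t)$ for $0<r\leq r'$ follow from $\lceil r\rceil\leq\lceil r'\rceil$ and the obvious containment $\Omega^i(O_L)\subseteq\Omega^i(O_L)(\log t)$; for $F=W_n\cO$ one uses that $\ul{t}^{\lceil r\rceil-1}F^{n-1}(a)\in W_n(O_L)$ implies $\ul{t}^{\lceil r'\rceil-1}F^{n-1}(a)\in W_n(O_L)$ when $\lceil r\rceil\leq\lceil r'\rceil$, since $\ul{t}\in W_n(O_L)$, and that $\Fil_0 = W_n(O_L)$ already contains the image of $F(O_L)$. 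So the genuine content is (iii).

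For the Hodge case, I would argue as follows. Let $L\subseteq L'$ have ramification index $e$, let $t$ be a uniformizer of $O_L$ and $t'$ one of $O_{L'}$, so $t = u\,t'^{\,e}$ with $u\in O_{L'}^\times$. The trace map on $\Omega^i$ is the one induced functorially from $\Cor$, and the key input is the behavior of $\tr_{L'/L}$ on the log-pole part: one shows $\tr_{L'/L}\bigl(\Omega^i(O_{L'})(\log t')\bigr)\subseteq \Omega^i(O_L)(\log t)$, essentially because $\dlog t' = \tfrac1e(\dlog t - \dlog u)$ and $\tr$ of regular forms is regular (this is the residue/duality compatibility for differentials on the normalization, which underlies that $\Omega^i$ is a Nisnevich sheaf on $\Cor$). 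Combining this with the elementary estimate that $\tr_{L'/L}(t'^{-\lceil r\rceil+1}\,\omega)$ lands in $t^{-\lceil r/e\rceil+1}\Omega^i(O_L)(\log t)$ — which reduces to checking $v_L$ of the relevant power, i.e. that $\lceil(\lceil r\rceil-1)/e\rceil\leq\lceil r/e\rceil-1$ fails in general so one instead works with the sharper bound $v_{L'}\geq \lceil r\rceil -1$ hence $v_L\geq\lceil(\lceil r\rceil-1)/e\rceil$, and then verifies $\lceil r/e\rceil-1\geq\lceil(\lceil r\rceil-1)/e\rceil$ whenever $r>0$ — gives the desired inclusion into $\Fil_{r/e}\Omega^i(L)$. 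Here one must be slightly careful to treat the boundary case where $r/e$ could be an integer versus not; I would handle this by the uniform inequality $\lfloor r\rfloor/e$-type bookkeeping, noting $r>0 \Rightarrow \lceil r\rceil\geq 1$.

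For the Witt case the mechanism is parallel but the computation runs through the Frobenius. With notation as above, the defining condition for $a'\in W_n(L')$ to lie in $\Fil_r$ is $\ul{t'}^{\,\lceil r\rceil-1}F^{n-1}(a')\in W_n(O_{L'})$. The trace $\tr_{L'/L}$ on $W_n\cO$ commutes with $F$ (the trace is a map of de Rham–Witt-type sheaves, and $F$ is a natural operator), so $F^{n-1}(\tr_{L'/L}a') = \tr_{L'/L}(F^{n-1}a')$; and from $\ul{t} = \ul{u}\,\ul{t'}^{\,e}$ in $W_n(O_{L'})$ one gets $\ul{t}^{\,\lceil r/e\rceil-1}F^{n-1}(\tr_{L'/L}a') = \ul{u}^{\,\lceil r/e\rceil-1}\,\tr_{L'/L}\bigl(\ul{t'}^{\,e(\lceil r/e\rceil-1)}F^{n-1}a'\bigr)$, and since $e(\lceil r/e\rceil-1)\geq \lceil r\rceil - 1$ together with $\ul{t'}\in W_n(O_{L'})$ and $\tr_{L'/L}(W_n(O_{L'}))\subseteq W_n(O_L)$ (because $O_{L'}$ is finite over $O_L$ and $W_n\cO$ is a Nisnevich sheaf on $\Cor$, so the trace of an integral section is integral), the right-hand side lies in $W_n(O_L)$. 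This shows $\tr_{L'/L}a'\in\Fil_{r/e}W_n\cO(L)$, as required.

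The main obstacle I anticipate is not any single step but the bookkeeping of ceilings and the normalization of the trace map in the two cases — specifically, pinning down that $\tr_{L'/L}$ as defined via the category $\Cor$ (from the transpose of the graph of a finite morphism, passed to the limit over $\Sm$) genuinely coincides with the classical trace on differentials / Witt vectors so that the compatibilities "$\tr$ commutes with $F$" and "$\tr$ preserves integral (log-)sections" may be invoked. Once that identification is in hand — which is where the cited references \cite{KSY16} for the sheaf structure and the standard theory of the trace on $\Omega^i$ and on $W_n\cO$ enter — the inequalities $e(\lceil r/e\rceil-1)\geq\lceil r\rceil-1$ and $\lceil(\lceil r\rceil-1)/e\rceil\leq\lceil r/e\rceil-1$ (valid for $r>0$) close the argument. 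The case $r=0$ is trivial since $\Fil_0$ on both sides is the integral part and traces preserve integrality.
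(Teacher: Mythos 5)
This proposition is not proved in the paper at all: it is a verbatim recollection of \cite[Def.~4.1, Lem.~4.4]{K23} and \cite[Def.~4.8, Lem.~4.11]{K23}, so the only thing to compare your argument against is the mechanism that the paper does spell out in its own analogues, Lemmas \ref{lem:4.2} and \ref{lem:4.3}. Your reduction to the three points (i)--(iii) is fine and (i), (ii) are indeed immediate, but your argument for the trace compatibility (iii) does not close, in either case, because the ceiling inequality it rests on is false. In the Witt case you invoke $e(\lceil r/e\rceil-1)\geq\lceil r\rceil-1$: for $r=e=2$ the left side is $0$ and the right side is $1$. In the Hodge case you first observe, correctly, that $\lceil(\lceil r\rceil-1)/e\rceil\leq\lceil r/e\rceil-1$ ``fails in general'' and then assert that you ``verify'' that very same inequality; again $r=e=2$ gives $1\leq 0$. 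The underlying problem is that ${\rm Tr}_{L'/L}(\Fil_rF(L'))\subseteq\Fil_{r/e}F(L)$ is \emph{not} a consequence of ``the trace preserves integral sections'' plus exponent bookkeeping: already for $n=1$, $i=0$, $e=2$, $r=2$ the claim is ${\rm Tr}_{L'/L}\bigl((t')^{-1}O_{L'}\bigr)\subseteq O_L$, which holds only because the trace \emph{improves} integrality --- the inverse different $(t')^{-(e-1)}O_{L'}$ lies in the trace dual of $O_{L'}$. This gain of $e-1$ units of $t'$-valuation is exactly what your argument omits, and without it the inclusion you need fails whenever $e\geq2$ divides $\lceil r\rceil$.

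The correct mechanism, visible in Lemma \ref{lem:4.3}, is to first prove the sharper statement $f_*^i\bigl(t'\Omega^i_{A'}(\log t')\bigr)\subseteq t\Omega^i_A(\log t)$ (part (1) there); this is where the duality input enters, via the criterion of Lemma \ref{lem:4.2} that $\omega\in t\Omega^i(\log t)$ iff $\omega\wedge\dlog t\in\Omega^{i+1}$, together with the projection formula, and it is strictly stronger than ``$f_*$ preserves regular forms.'' One then multiplies $\omega\in(t')^{-\lceil r\rceil+1}\Omega^i(\log t')$ by $(t')^{e\lceil r/e\rceil}=u^{-\lceil r/e\rceil}t^{\lceil r/e\rceil}$ and uses the \emph{true} inequality $e\lceil r/e\rceil\geq\lceil r\rceil$, so that the shift by $-1$ on both exponents is absorbed by (1) rather than by ceiling arithmetic. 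Two further points: your identity $\dlog t'=\tfrac1e(\dlog t-\dlog u)$ is unusable when $p\mid e$ (wild ramification); and the valuation bound ``$v_{L'}({\rm Tr}\,x)\geq v_{L'}(x)$, hence $v_L({\rm Tr}\,x)\geq\lceil v_{L'}(x)/e\rceil$'' produces a floor, not the ceiling you wrote (with the floor it does salvage the case $i=0$, $n=1$, but it does not extend to $i\geq1$, where one must work with the $O_L$-module structure of $\Omega^i(O_L)(\log t)$ rather than a valuation, nor to $W_n$ with $n\geq2$, where your route of multiplying by a nonnegative power of $\ul{t'}$ forces precisely the false inequality above). So the skeleton is right, but the key lemma --- the analogue of Lemma \ref{lem:4.3}(1), i.e.\ \cite[Lem.~4.3]{K23} and its Witt-vector counterpart --- is missing, and it cannot be replaced by integrality of the trace alone.
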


Then the following theorem is known as 
the second main result of \cite{K23}. 

\begin{thm}[{Koizumi \cite[Cor.~4.7, Cor.~1.5]{K23}}]\label{thm:koizumi2}
$(1)$ \, Let $i \in \N$. Then the Nisnevich sheaf 
$\ul{\rm M}\Omega^i$ on $\MCorQ$ in Proposition \ref{prop:koizumi}$(1)$ 
satisfies the conditions $(1)$, $(2)$, $(3)$ of Theorem 
\ref{thm:koizumi1} $($hence satisfies the blow-up invariance$)$. \\
$(2)$ \, Assume $k$ has characteristic $p>0$ and let $n \in \N$. Then 
the Nisnevich sheaf 
$\ul{\rm M}W_n\cO$ on $\MCorQ$ in Proposition \ref{prop:koizumi}$(2)$ 
satisfies the conditions $(1)$, $(2)$, $(3)$ of Theorem 
\ref{thm:koizumi1} $($hence satisfies the blow-up invariance$)$. 
\end{thm}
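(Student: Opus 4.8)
The strategy is to reduce the verification of $(1)$, $(2)$, $(3)$ to a single \emph{explicit description} of the two modulus sheaves on log smooth modulus pairs; granting this, $(2)$ and $(3)$ are essentially formal, and $(1)$ becomes a cohomology computation on $\Pr^1$.

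\emph{Step 1 (explicit description).} For $\fX=(X,D_X)\in\MCorQls$ write $j\colon X^{\circ}\hra X$ for the open immersion. The first point is that, for $F=\Omega^i$ or $F=W_n\cO$, a section $a\in F(X^{\circ})$ is bounded by $D_X$ if and only if, at each generic point of each component of $|D_X|$, the germ of $a$ lies in the $\Fil$-piece prescribed in Proposition \ref{prop:koizumi} for the geometric henselian discrete valuation field there. The non-trivial implication rests on the fact that an arbitrary diagram \eqref{eq:ramdiag} can, after a normalization, be dominated by such a divisorial valuation, using the compatibility of $\Fil$ with trace maps and with ramification indices. Together with the formulas of Proposition \ref{prop:koizumi}, this yields
\[
\ul{\rm M}\Omega^i_{\fX}=\Omega^i_X(\log|D_X|)\otimes_{\cO_X}\cO_X(\lceil D_X\rceil-|D_X|),
\]
and identifies $\ul{\rm M}W_n\cO_{\fX}$ with the $\cO_X$-submodule of $j_*W_n\cO_{X^{\circ}}$ whose sections satisfy, at each generic point of $|D_X|$, the pole condition of Proposition \ref{prop:koizumi}$(2)$ for $r$ equal to the multiplicity of $D_X$ there. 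It then remains to check that these prescriptions define quasi-coherent (indeed coherent) $\cO_X$-modules that agree, as Nisnevich sheaves on $X$, with $F_{\fX}$. For $\Omega^i$ this is routine; the Witt case, which requires carrying out the pole-order bookkeeping among $F$, $V$, Teichm\"uller lifts and the divisor multiplicities, is where I expect the real difficulty to lie.

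\emph{Step 2 (conditions $(3)$ and $(2)$).} Left continuity follows at once: since $|D_X|$ has finitely many components, for all small enough $\epsilon>0$ one has $|(1-\epsilon)D_X|=|D_X|$, $\lceil(1-\epsilon)D_X\rceil=\lceil D_X\rceil$, and the pole condition for $W_n\cO$ is unaffected, so $F(X,(1-\epsilon)D_X)=F(\fX)$ for all such $\epsilon$ and the colimit in $(3)$ is an isomorphism. For the affine vanishing property, $\fX\otimes(\A^1,E_1)\otimes\cdots\otimes(\A^1,E_n)$ is again log smooth, so by Step 1 the associated sheaf is a quasi-coherent module on its underlying scheme $X\times(\A^1)^n$; as $\pr_1$ is an affine morphism, $R^i\pr_{1,*}$ of such a sheaf vanishes for $i>0$.

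\emph{Step 3 (condition $(1)$).} Apply Step 1 to $\fX\otimes\ol{\square}=(X\times\Pr^1,\pr_1^*D_X+\{\infty\})$ and compute along $\pr_1\colon X\times\Pr^1\to X$. For $\Omega^i$, the K\"unneth decomposition of log differentials gives
\[
\ul{\rm M}\Omega^i_{\fX\otimes\ol{\square}}=\pr_1^*\ul{\rm M}\Omega^i_{\fX}\oplus\bigl(\pr_1^*\ul{\rm M}\Omega^{i-1}_{\fX}\otimes\pr_2^*\Omega^1_{\Pr^1}(\log\infty)\bigr),
\]
and since $R\Gamma(\Pr^1,\cO_{\Pr^1})=k$ while $R\Gamma(\Pr^1,\Omega^1_{\Pr^1}(\log\infty))=R\Gamma(\Pr^1,\cO_{\Pr^1}(-1))=0$, one obtains $R\pr_{1,*}\ul{\rm M}\Omega^i_{\fX\otimes\ol{\square}}=\ul{\rm M}\Omega^i_{\fX}$. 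For $W_n\cO$, the same identity $R\pr_{1,*}\ul{\rm M}W_n\cO_{\fX\otimes\ol{\square}}=\ul{\rm M}W_n\cO_{\fX}$ follows by d\'evissage through the Verschiebung sequences $0\to W_{n-1}\cO\xrightarrow{V}W_n\cO\to\cO\to 0$, reducing to the case $n=1$, together with Illusie's computation of the de Rham--Witt cohomology of $\Pr^1$; the coefficient-one modulus at $\infty$ is exactly what makes all the ``positive'' contributions along $\Pr^1$ acyclic, leaving only $R\Gamma(\Pr^1,W_n\cO_{\Pr^1})=W_n(k)$ in degree $0$. Applying $R\Gamma(X,-)$ gives the quasi-isomorphism $R\Gamma(X,F_{\fX})\xrightarrow{\ \sim\ }R\Gamma(X\times\Pr^1,F_{\fX\otimes\ol{\square}})$ of condition $(1)$.
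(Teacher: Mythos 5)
First, an important caveat: the paper does not prove this statement at all. Theorem \ref{thm:koizumi2} is recalled verbatim from Koizumi with the citation \cite[Cor.~4.7, Cor.~1.5]{K23}, so there is no in-paper proof to measure your attempt against. That said, your outline does reproduce the strategy that Koizumi uses and that this paper adapts to the Hodge--Witt case (Proposition \ref{prop:ls-form} and the proof of Theorem \ref{thm:main}(2)): an explicit coherent description of $F_{\fX}$ for log smooth $\fX$, from which (3) follows because $\lceil(1-\epsilon)b_a\rceil=\lceil b_a\rceil$ for small $\epsilon>0$, (2) follows from quasi-coherence together with affineness of $\pr_1$, and (1) follows from an explicit computation over $\Pr^1$ in which the coefficient-one modulus at $\infty$ kills the higher cohomology. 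The architecture is right, and the $\Omega^i$ half is essentially complete modulo Step 1.

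Two steps carry the real content and are currently assertions rather than proofs. (a) In Step 1, the nontrivial inclusion (explicit sheaf $\subseteq$ sections bounded by $D_X$) is not obtained by ``dominating'' an arbitrary diagram \eqref{eq:ramdiag} by a divisorial valuation: the test valuation $v_L$ is fixed and may be centered at a deep stratum of $|D_X|$. One must pull the section back along $\tilde{\rho}$ and bound its pole order using $\tilde{\rho}^*x_a=u_at^{e_a}$ with $e_a=v_L(\tilde{\rho}^*D_a)$; the whole point is the ceiling inequality $\sum_ae_a(\lceil b_a\rceil-1)\le\lceil\sum_ae_ab_a\rceil-1$ (compare the computation in Proposition \ref{prop:ls-form}). (b) For the cube invariance of $\ul{\rm M}W_n\cO$, the d\'evissage along $0\to W_{n-1}\cO\xrightarrow{V}W_n\cO\to\cO\to0$ does not literally reduce to the case $n=1$. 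Using $\ul{t}^{N}V(c)=V(\ul{t}^{pN}c)$ and $F^{n-1}V=VF^{n-1}$ one checks that $V(b)\in\Fil_rW_n\cO(L)$ iff $b\in\Fil_rW_{n-1}\cO(L)$, but the image of $\Fil_rW_n\cO(L)$ in $L$ under $a\mapsto a_0$ is $t^{-\lfloor(\lceil r\rceil-1)/p^{n-1}\rfloor}O_L$, which is strictly smaller than $\Fil_r\cO(L)=t^{-\lceil r\rceil+1}O_L$; so the induced short exact sequence of modulus subsheaves has a rescaled quotient, not $\ul{\rm M}\cO$ with the same modulus. This can still be pushed through for the specific purpose of the $\Pr^1$-computation (at $\infty$ the coefficient is $1$, so the rescaling is invisible there, and the rescaling in the $X$-direction is pulled back along $\pr_1$), but that bookkeeping is exactly where the work is; Koizumi instead uses an explicit generating description of $\ul{\rm M}W_n\cO(\fX)$ by $V^j$ of Teichm\"uller monomials and computes on $\Pr^1$ termwise. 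As written, Step 3 for $W_n\cO$ is a gap.
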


We note that, when $k$ has characteristic zero, 
the blow-up invariance for $\ul{\rm M}\Omega^i$ in the case $\fX \in \MCorZls$ 
is proven first by Kelly-Miyazaki \cite{KM23a}, \cite{KM23b}. 

In Theorem \ref{thm:koizumi2}, (1) is the statament for Hodge sheaves, and 
(2) is the statement for Witt sheaves. So it would be natural to expect 
that the analogue of 
Proposition \ref{prop:koizumi} and Theorem \ref{thm:koizumi2} 
would be true also for Hodge-Witt sheaves. This is our main result. 

\begin{thm}\label{thm:main}
Assume $k$ has positive characteristic $p>0$ and let $i,n \in \N$. \\
$(1)$ \, 
There exists 
a ramification filtration on the Hodge-Witt sheaf $W_n\Omega^i$
which coincides with the ramification filtration on 
the Hodge sheaf $\Omega^i$ when $n=1$ and that on the Witt sheaf 
$W_n\cO$ when $i=0$. 
We denote the associated sheaf $(W_n\Omega^i)_{\Fil}$ on $\MCorQ$ by 
$\ul{\rm M}W_n\Omega^i$. \\
$(2)$ \, 
The Nisnevich sheaf 
$\ul{\rm M}W_n\Omega^i$ on $\MCorQ$ in $(1)$ 
satisfies the conditions $(1)$, $(2)$, $(3)$ of Theorem 
\ref{thm:koizumi1} $($hence satisfies the blow-up invariance$)$. 
\end{thm}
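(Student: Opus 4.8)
The plan is to define the ramification filtration on $W_n\Omega^i$ by mimicking the two known cases and then verifying the three hypotheses of Theorem \ref{thm:koizumi1} via a dévissage on $n$. For $L \in \Phi$ with uniformizer $t$ and Teichm\"uller lift $\ul{t} \in W_n(O_L)$, I would set
\[
\Fil_r W_n\Omega^i(L) =
\left\{
\begin{aligned}
& W_n\Omega^i(O_L) & (r=0) \\
& \{\omega \in W_n\Omega^i(L) \mid \ul{t}^{\lceil r\rceil -1}\omega \in W_n\Omega^i(O_L) + W_n\Omega^{i-1}(O_L)\,d\log\ul{t}\} & (r>0),
\end{aligned}
\right.
\]
so that for $n=1$ this recovers $t^{-\lceil r\rceil+1}\Omega^i(O_L)(\log t)$ and for $i=0$ (where $d\log\ul{t}$ drops out) it recovers Koizumi's $\Fil_r W_n\cO(L)$, after translating $\ul{t}^{\lceil r\rceil-1}\omega \in W_n(O_L)$ into the $F^{n-1}$-condition via the standard identity $F^{n-1}(\ul{t}^m a) = \ul{t}^{p^{n-1}m}F^{n-1}(a)$ and the injectivity properties of $F$ on $W_n$; one has to be slightly careful matching the two normalizations, and I would choose the $F^{n-1}$-flavoured definition if that makes the $i=0$ comparison literal. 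First I would check that this is genuinely a ramification filtration: that $W_n\Omega^i(O_L)$ lands in $\Fil_0$ is immediate, and compatibility with traces under a finite extension $L\subseteq L'$ of ramification index $e$ reduces, via the projection formula for the trace on $W_n\Omega^i$ and the fact that $\Tr$ sends $W_n\Omega^i(O_{L'})$ into $W_n\Omega^i(O_L)$, to a valuation bookkeeping on powers of $\ul{t}$, exactly as in \cite[Lem.~4.4, Lem.~4.11]{K23}.

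For part (2), the key structural input is the exact sequence of pro-complexes relating $W_n$, $W_{n-1}$ and the de Rham–Witt components in weight one, i.e. the sequences built from $\underline{p}\colon W_{n-1}\Omega^i \to W_n\Omega^i$ and restriction $R\colon W_n\Omega^i \to W_{n-1}\Omega^i$, together with the Illusie short exact sequences expressing $\gr$ pieces in terms of $\Omega^i$, $\Omega^{i-1}$, $Z\Omega^i$, $B\Omega^i$. I would first establish that the filtration $\Fil$ above is compatible with these maps — more precisely that $\underline{p}$ and $R$ are strict for the filtrations, with an explicit shift in the indices coming from the factor $p$ (this is where the $\ul{t}^{p^{n-1}}$-type scaling enters) — so that $\ul{\rm M}W_n\Omega^i$ sits in a short exact sequence of Nisnevich sheaves on $\MCorQ$ whose outer terms are built from $\ul{\rm M}W_{n-1}\Omega^i$ and from modulus sheaves associated to $\Omega^i,\Omega^{i-1}$ (the latter being essentially $\ul{\rm M}\Omega^i,\ul{\rm M}\Omega^{i-1}$ up to Frobenius twists and log-differential corrections). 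Granting this, conditions (1) and (2) of Theorem \ref{thm:koizumi1} — cohomological cube invariance and the affine vanishing property — propagate through short exact sequences and filtered colimits, so they follow for $\ul{\rm M}W_n\Omega^i$ from the cases $n=1$ (Theorem \ref{thm:koizumi2}(1), i.e. $\ul{\rm M}\Omega^i$ and $\ul{\rm M}\Omega^{i-1}$) and $i$ arbitrary by induction on $n$, using Theorem \ref{thm:koizumi2}(2) to anchor the $i=0$ slice. Left continuity, condition (3), is again inherited from the pieces, since $\varinjlim_\epsilon$ is exact and commutes with the finite filtration.

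The main obstacle I expect is the strictness/compatibility of $\Fil$ with $R$ and $\underline{p}$: a priori the filtration I have written is defined by a membership condition after multiplying by $\ul{t}^{\lceil r\rceil-1}$, and pushing this through $R$ and $\underline{p}$ requires knowing precisely how these operators interact with the submodule $W_n\Omega^i(O_L) + W_n\Omega^{i-1}(O_L)\,d\log\ul{t}$ of "integral forms with at worst log poles along $t$", including the subtle $p$-adic divisibility phenomena in the de Rham–Witt complex (e.g. $Fd\log\ul{t} = d\log\ul{t}$, $V$ and $d$ relations, and the behaviour of $B\Omega$, $Z\Omega$ under $F,V$). Controlling these is essentially a local computation over $O_L$ in the de Rham–Witt complex; I would do it by reducing to the standard basis of $W_n\Omega^i$ over a complete $O_L$ (decomposing forms by $V^j\ul{x}$- and $dV^j\ul{x}$-type monomials à la Illusie), and tracking the $t$-adic order of each monomial under $\ul{t}^m\cdot(-)$ and under $F$. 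A secondary technical point is to make the $n=1$ and $i=0$ comparisons in part (1) literal rather than merely "up to reindexing", which may force a specific choice among the several equivalent-looking formulas for $\Fil_r$; I would pin down the definition precisely so that both degenerations are equalities on the nose, as the statement of Theorem \ref{thm:main}(1) demands.
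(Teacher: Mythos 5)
Your proposal diverges from the paper at the very first step, and the divergence is not a cosmetic one: the candidate filtration you write down is not the right object, and you have correctly sensed but not resolved the problem. At $i=0$ your condition $\ul{t}^{\lceil r\rceil-1}a\in W_n(O_L)$ is strictly weaker than Koizumi's condition $\ul{t}^{\lceil r\rceil-1}F^{n-1}(a)\in W_n(O_L)$: for $n=2$ and $a=V([t_0^{-p}])$ one has $\ul{t}^{N}a=V([t_0^{pN-p}])\in W_2(O_L)$ already for $N\geq 1$, while $\ul{t}^{N}F(a)=V([t_0^{pN-p^2}])$ requires $N\geq p$. So your filtration does not specialize to the Witt filtration, and simply inserting $F^{n-1}$ by hand breaks the $i>0$ picture (the $d\log\ul{t}$ correction and the kernel of $F^{n-1}$ on $W_n\Omega^i$ are then uncontrolled). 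The paper sidesteps this entirely by never defining the filtration on the de Rham--Witt complex itself: it defines meromorphic Hodge--Witt sheaves $R^iu_{X,*}\cI_{(X,D)}^{\otimes(-\b)}$ via log crystalline cohomology of a lift $\cX/W_n$, sets $\Fil'_r=\varinjlim H^i_{\crys}(\cI^{\otimes(-\lceil r\rceil+1)})$, and then \emph{renumbers} by $\Fil_r:=\Fil'_{p\lceil r\rceil}$; the factor $p$ in the pole order is exactly what the Frobenius-semilinearity of the Cartier inverse (for $n=1$) and the $F^{n}$ versus $F^{n-1}$ computation (for $i=0$) force, and it is invisible in your normalization. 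Trace compatibility is then proved through the pushforward on de Rham complexes of lifts plus a nontrivial compatibility with the Illusie--Raynaud isomorphism (the paper's Appendix), not by valuation bookkeeping in $W_n\Omega^i(O_L)$.

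For part (2) your d\'evissage on $n$ via $R$, $\ul{p}$ and the Illusie graded pieces has two unaddressed obstructions. First, the graded pieces of $W_n\Omega^i$ involve $Z\Omega^i$ and $B\Omega^i$, which are not among the sheaves covered by Theorem \ref{thm:koizumi2}; you would have to construct ramification filtrations on these and prove cube invariance, affine vanishing and left continuity for them from scratch, which is a problem of the same order of difficulty as the original one. Second, propagating the three conditions through your sequences requires that the induced sequences of modulus sheaves on $\MCorQ$ be exact, i.e. exactly the strictness you flag as the ``main obstacle'' and do not prove. The paper's route avoids both issues: the key Proposition \ref{prop:ls-form} identifies $\ul{\rm M}W_n\Omega^i(\fX)$ for log smooth $\fX$ with $\Gamma(X,R^iu_{X,*}\cI_{(X,|D_X|)}^{\otimes(-p\lceil\b\rceil+\1)})$, a quasi-coherent cohomology sheaf of a concrete complex $\x^{-p(\c-\1)}\Omega^\bullet_{\cX/W_n}(\log\cD)$ on a lift; conditions (2) and (3) then follow from quasi-coherence and the explicit pole orders, and condition (1) is reduced by exact sequences of these complexes (not of de Rham--Witt sheaves) to the Cartier isomorphism in the case $n=1$, $D_X=\emptyset$. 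As it stands your proposal is a programme whose two load-bearing steps --- the correct definition and the strict compatibility of the filtration with the de Rham--Witt operators --- are both open.
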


The main idea of the construction is to use the isomorphism 
\begin{equation}\label{eq:ir-intro}
W_n\Omega^i_X \cong R^iu_{X,*}\cO_{X/W_n} \cong 
\cH^i(\Omega^{\bullet}_{\cX/W_n})
\end{equation}
(where $X \in \Sm$, $\cX$ is a lift of $X$ to a smooth scheme over $W_n$
 and $u_X: (X/W_n)_{\crys}^{\sim} \lra X_{\Zar}^{\sim}$ is the projection from the crystalline topos to the Zariski topos) 
of Illusie-Raunaud \cite{IR83}. 
Using the filtration on 
$\Omega^{i}_{\cX/W_n}$ analogous to that of Kelly-Miyazaki \cite{KM23a}, \cite{KM23b} and Koizumi \cite{K23}, 
we define the ramification filtration on $W_n\Omega^i$. Precisely speaking, 
in order to define the filtration independently 
of the choice of the lift $\cX$, we define the filtration on the level of 
crystalline cohomology sheaves $R^iu_{X,*}\cO_{X/W_n}$. 
Also, we need certain renumbering of the filtration 
in order to match our filtration with that of Kelly-Miyazaki and Koizumi. 

As a consequence of our main result, we can prove the representability of 
Hodge-Witt cohomology with modulus in the category 
$\MDMeff$ of motives with modulus of Kahn-Miyazaki-Saito-Yamazaki (\cite{KMSY21a}, \cite{KMSY21b}, \cite{KMSY22}). 

\begin{cor}
Assume that $k$ has positive characteristic $p>0$ and that 
$k$ admits resolution of singularities in the sense of 
{\rm \cite[Def.~5.1]{K23}}. Then, for $i,n \in \N$, there exists an object 
$\bMDWn$ in $\MDMeff$ such that, for any $\fX = (X,D_X) \in \MCorZls$, 
we have an equivalence 
$$ {\rm map}_{\MDMeff}({\rm M}(\fX), \bMDWn) \simeq 
R\Gamma(X, \ul{\rm M}W_n\Omega^i_{\fX}), $$
where ${\rm M}(\fX)$ is the object in $\MDMeff$ associated to $\fX$. 
\end{cor}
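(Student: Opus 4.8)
The plan is to derive this corollary as a formal consequence of Theorem~\ref{thm:main} together with the general representability machinery in \cite{K23}, in exactly the same way that Koizumi deduces the representability of $\ul{\rm M}\Omega^i$ and $\ul{\rm M}W_n\cO$ in $\MDMeff$. The input one needs from \cite{K23} is a criterion of the following shape: if $F$ is a Nisnevich sheaf on $\MCorQ$ satisfying conditions (1), (2), (3) of Theorem~\ref{thm:koizumi1} (cube invariance, affine vanishing, left continuity), and if moreover $k$ admits resolution of singularities in the sense of \cite[Def.~5.1]{K23}, then the restriction of $F$ to $\MCorZls$ lies in (the heart of, or is representable in) $\MDMeff$, i.e. there is an object of $\MDMeff$ computing $R\Gamma(X,F_{\fX})$ via the mapping spectrum out of ${\rm M}(\fX)$. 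Granting this criterion, the corollary is immediate: by Theorem~\ref{thm:main}(1) the ramification filtration $\Fil$ on $W_n\Omega^i$ exists, so the sheaf $\ul{\rm M}W_n\Omega^i = (W_n\Omega^i)_{\Fil}$ on $\MCorQ$ is defined, and by Theorem~\ref{thm:main}(2) it satisfies conditions (1), (2), (3) of Theorem~\ref{thm:koizumi1}; applying the criterion with $F = \ul{\rm M}W_n\Omega^i$ produces the desired object $\bMDWn$ and the equivalence $\mathrm{map}_{\MDMeff}(\mathrm{M}(\fX),\bMDWn)\simeq R\Gamma(X,\ul{\rm M}W_n\Omega^i_{\fX})$.

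Concretely, I would carry out the following steps. First, recall from \cite{K23} the precise statement of the representability result for sheaves on $\MCorQ$ satisfying the three conditions — this is the analogue of \cite[Cor.~1.5]{K23} phrased at the level of $\MDMeff$ rather than just the blow-up invariance; the role of resolution of singularities is to guarantee that $\MCorZls$ generates enough of $\MCorZ$ (via abstract blow-up squares) that the blow-up invariant, cube invariant sheaf descends to a motivic object. Second, observe that the hypotheses of that result are met: by Theorem~\ref{thm:main}(2), $\ul{\rm M}W_n\Omega^i$ is cube invariant in the cohomological sense, has the affine vanishing property, and is left continuous. Third, set $\bMDWn$ to be the object of $\MDMeff$ that the criterion outputs, and read off the adjunction/representability equivalence for $\fX\in\MCorZls$. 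Fourth, check compatibility in the edge cases — for $n=1$ this should recover the object representing $\ul{\rm M}\Omega^i$ and for $i=0$ the one representing $\ul{\rm M}W_n\cO$ from \cite{K23} — which follows from the corresponding compatibility of ramification filtrations asserted in Theorem~\ref{thm:main}(1), by uniqueness of the representing object.

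I do not expect a serious obstacle here: the mathematical content is entirely contained in Theorem~\ref{thm:main}, and the passage to $\MDMeff$ is a black-box application of \cite{K23}. The one point requiring care is bookkeeping about coefficients and categories — Theorem~\ref{thm:main} lives over $\MCorQ$ while the corollary is stated for $\fX\in\MCorZls$ and for the object ${\rm M}(\fX)\in\MDMeff$; one must make sure that the sheaf $\ul{\rm M}W_n\Omega^i$ restricted along $\MCorZls\hookrightarrow\MCorZ\to\MCorQ$ is the relevant object, and that the formation of $\MDMeff$ (with its choice of $\Z$- versus $\Q$-coefficients) is the one used in \cite{K23}. Provided the conventions match those of \cite{K23} — which the statement's explicit citation of \cite[Def.~5.1]{K23} signals — this is routine, and the proof is essentially the sentence \emph{apply \cite[Cor.~1.5]{K23}, or its $\MDMeff$-refinement, to the sheaf $\ul{\rm M}W_n\Omega^i$ of Theorem~\ref{thm:main}}.
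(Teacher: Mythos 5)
Your proposal is correct and follows essentially the same route as the paper, whose entire proof is ``the claim follows immediately from Theorem \ref{thm:main} and \cite[Lem.~5.3]{K23}'' --- i.e.\ exactly the black-box representability criterion you describe, applied to the sheaf $\ul{\rm M}W_n\Omega^i$ once Theorem \ref{thm:main}(2) verifies conditions (1), (2), (3). The only discrepancy is the precise citation: the relevant statement in \cite{K23} is Lemma 5.3 rather than Corollary 1.5.
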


\begin{proof}
The claim follows immediately from Theorem \ref{thm:main} 
and \cite[Lem.~5.3]{K23}. 
\end{proof}

In Section 2, we give a proof of Theorem \ref{thm:main}. 
In Appendix, we prove the compatibility of pushforward maps of 
Hodge-Witt sheaves and that of de Rham complex via the isomorphism 
\eqref{eq:ir-intro} of Illusie-Raynaud, which we use in Section 2. 

\section{Proofs}

First we discuss on projective systems of smooth $k$-schemes 
whose limit is the valuation ring of a given geometric henselian 
discrete valuation field. So let $L \in \Phi$, and denote by $O_L$, $k_L$ 
its valuation ring and its residue field respectively. 
Let $s_L$ be the closed point of $\Spec O_L$ and we regard the pair 
$(\Spec O_L, s_L)$ as a log scheme. 
Let $\cP_L$ be the set of all triples 
$(X,D,g)$ satisfying the following property (A): 

\begin{enumerate}
\item[(A)]\label{item:A}
$X = \Spec A_0$ is an affine smooth $k$-variety, $D$ is a smooth principal 
divisor in $X$, $g$ is a morphism $(\Spec O_L,s_L) \lra (X,D)$ of log schemes 
induced by an injective map $A_0 \hookrightarrow O_L$ of $k$-algebras. 
\end{enumerate}

We define the partial order $\succeq$ on $\cP_L$ in such a way that 
$(X',D',g') \succeq (X,D,g)$ if and only if there exists a 
morphism $\varphi:(X',D') \lra (X,D)$ such that $\varphi \circ g' = g$. 
Then, with this partial order, $\cP_L$ is a projective system. 
By the existence of an isomorphism 
$O_L \cong k_L[t_0]^{\rm h}$ (where $k_L[t_0]^{\rm h}$ denotes 
the henselization of the polynomial algebra $k_L[t_0]$ with respect to the ideal $(t_0)$), we see that $\varprojlim_{\cP_L} X = \Spec O_L$, 
$\varprojlim_{\cP_L} (X \setminus D) = \Spec L$. 

Next, when we choose a fixed uniformizer $t_0$ of $O_L$, 
we define $\cP'_L \subseteq \cP_L$ to be the sub projective system 
consisting of all the triples $(X,D,g)$ satisfying the property (A) and 
the following property (B): 

\begin{enumerate}
\item[(B)]\label{item:B} 
$t_0 \in A_0$, $D$ is defined by $t_0$, and there exists a smooth 
$k$-algebra $B_0$ and an etale map 
$\varphi_0: B_0[t_0] \lra A$ of $k$-algebras. 
\end{enumerate}

%

Noting the isomorphism $O_L \cong k_L[t_0]^{\rm h}$ above, 
we see that $\cP'_L$ is cofinal in $\cP_L$. 
In particular, we have $\varprojlim_{\cP'_L} X = \Spec O_L$, $\varprojlim_{\cP'_L} (X \setminus D) = \Spec L$. 

Next consider the situation 
that we are given a finite extension $L \subseteq L'$ of 
geometric henselian discrete valuation fields. 
Let $O_L, k_L, t_0$ be as before and denote by $O_{L'}, k_{L'}, t'_0$ 
the valuation ring of $L'$, the residue field of $L'$ 
and an uniformizer of $O_{L'}$ respectively. 
We define $\cP''_{L,L'}$ to be the projective system consisting of 
the tuples $(X,D,g,X',D',g',h_0)$ with $(X,D,g) \in \cP'_L$, 
$(X',D',g') \in \cP'_{L'}$ and a finite morphism of 
log schemes $h_0:(X',D') \lra (X,D)$ satisfying 
the following property (C): 

\begin{enumerate}
\item[(C)]\label{item:C}
The diagram 
\[ 
\xymatrix{
(\Spec O_{L'},s_{L'}) \ar[r]^-{g'} \ar[d] & 
(X',D') \ar[d]^{h_0} \\
(\Spec O_L,s_L) \ar[r]^-g & 
(X,D)}
\]
is cartesian. 
\end{enumerate}

We check that the projective system $\cP''_{L.L'}$ is useful: 

\begin{lem}\label{lem:cofinal}
If we denote by $\cP''_L$, $\cP''_{L'}$ the image of the map 
\begin{align*}
& \cP''_{L,L'} \lra \cP_L, \quad (X,D,g,X',D',g',h_0) \mapsto (X,D,g) \\
& \cP''_{L,L'} \lra \cP_{L'}, \quad (X,D,g,X',D',g',h_0) \mapsto (X',D',g')
\end{align*}
respectively, $\cP''_L$, $\cP''_{L'}$ are cofinal in 
$\cP_L, \cP_{L'}$ respectively. In particular, 
$\varprojlim_{\cP''_{L,L'}} X = \Spec O_L$, 
$\varprojlim_{\cP''_{L,L'}} (X \setminus D) = \Spec L$, 
$\varprojlim_{\cP''_{L,L'}} X' = \Spec O_{L'}$, 
$\varprojlim_{\cP''_{L,L'}} (X' \setminus D') = \Spec L'$. 
\end{lem}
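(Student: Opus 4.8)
The plan is to produce, at a finite level, a model of the morphism $\Spec O_{L'}\to\Spec O_L$ together with the divisors $V(t_0)$, $V(t'_0)$, and then to repair it by shrinking the base. Since $\cP'_L$ and $\cP'_{L'}$ are cofinal in $\cP_L$ and $\cP_{L'}$, and since every tuple in $\cP''_{L,L'}$ has by definition its two projections in $\cP'_L$ and $\cP'_{L'}$ (so $\cP''_L\subseteq\cP'_L$ and $\cP''_{L'}\subseteq\cP'_{L'}$), it suffices to prove that $\cP''_L$ is cofinal in $\cP'_L$ and $\cP''_{L'}$ is cofinal in $\cP'_{L'}$; the statements on projective limits then follow formally (surjectivity of $\cP''_{L,L'}\to\cP''_L$, resp.\ $\cP''_{L,L'}\to\cP''_{L'}$, together with $\varprojlim_{\cP'_L}X=\Spec O_L$, etc.). Both cofinality assertions follow from one construction: \emph{given $(X,D,g)\in\cP'_L$ and $(X',D',g')\in\cP'_{L'}$, I will build a tuple in $\cP''_{L,L'}$ whose two projections dominate $(X,D,g)$ and $(X',D',g')$} — taking one of the two inputs arbitrary yields each statement separately.

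Recall that $O_{L'}$ is the integral closure of the excellent henselian DVR $O_L$ in $L'$, hence a DVR, finite free of rank $N:=[L':L]$ over $O_L$, and that $t_0=u\,(t'_0)^e$ in $O_{L'}$ for a unit $u$ and $e$ the ramification index. First I would spread out the finite flat morphism $\Spec O_{L'}\to\Spec O_L$: writing $O_L=\varinjlim_{\cP'_L}\cO(X_\lambda)$, standard spreading-out arguments give $X_1\in\cP'_L$ with $X_1\succeq X$ (chosen, moreover, so that $\cO(X_1)\to O_L$ is ind-smooth, hence a regular ring map) and a finite flat $Y^{\mathrm{pre}}\to X_1$ with $Y^{\mathrm{pre}}\times_{X_1}\Spec O_L=\Spec O_{L'}$; inspecting the generic fibre shows $Y^{\mathrm{pre}}$ is integral, with function field $M$ of degree $N$ over $K(X_1)$ and $M\otimes_{K(X_1)}L\cong L'$. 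Next I would enlarge $X_1$ inside $\cP'_L$ and replace $Y^{\mathrm{pre}}$ by $Y:=$ the normalization of $X_1$ in the subfield $M\cdot K(X_1)\subseteq L'$, arranging simultaneously that $t'_0$ and $u^{\pm1}$ lie in $\cO(Y)$ (possible since $t'_0$ is integral over $O_L$ and $L'=\varinjlim_\lambda M\cdot K(X_\lambda)$), that $t_0=u(t'_0)^e$ holds in $\cO(Y)$, and that there is a $k$-morphism $\psi\colon Y\to X'$ with $\Spec O_{L'}\to Y\xrightarrow{\psi}X'$ equal to $g'$ (because $O_{L'}$ is the colimit of the coordinate rings of these normalizations, so $\cO(X')\to O_{L'}$ factors through $\cO(Y)$ once $X_1$ is large). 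Since normalization commutes with the regular base change $\Spec O_L\to X_1$, still $Y\times_{X_1}\Spec O_L=\Spec O_{L'}$; thus $Y$ is normal and integral, finite over the smooth $X_1$, with function field $M\cdot K(X_1)$.

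The crucial step is shrinking $X_1$. Put $\mathfrak p:=\mathfrak m_L\cap\cO(X_1)$; enlarging once more if needed, $\mathfrak p$ is a codimension-one point of $D_1:=V(t_0)\subseteq X_1$, and the image of $\Spec O_{L'}\to Y$ is $\{\eta_Y,\mathfrak q_0\}$ with $\eta_Y$ the generic point and $\mathfrak q_0$ the \emph{unique} point of $Y$ over $\mathfrak p$ — unique because the fibre of $Y\to X_1$ over $s_L$ is $\Spec(O_{L'}/\mathfrak m_LO_{L'})$, a one-point scheme. As $Y$ is normal, $\cO_{Y,\mathfrak q_0}$ is a DVR, equal to $O_{L'}\cap(M\cdot K(X_1))$; since $t'_0$ lies in this field its valuation there is $1$, so $V(t'_0)$ is regular at $\mathfrak q_0$, and — using $t_0=u(t'_0)^e$ and shrinking $X_1$ so that $D_1$ becomes irreducible — $V(t'_0)$ is the reduced integral divisor $\overline{\{\mathfrak q_0\}}$. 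Because $k$ is perfect, the non-smooth loci of $Y$ and of $V(t'_0)$ over $k$ are closed, of codimension $\ge1$ in those schemes but disjoint from $\{\eta_Y,\mathfrak q_0\}$; since $Y\to X_1$ is finite, their images in $X_1$ (together with the superfluous components of $D_1$) form a closed subset avoiding $(0)$ and $\mathfrak p$, which one removes by passing to a basic open $D(f)\ni(0),\mathfrak p$. After one further shrinking of this kind to install a global étale chart $B'_0[t'_0]\to\cO(Y)$ near $\mathfrak q_0$ (property (B)), the tuple $(X_1,D_1,g_1,Y,D'_1,g'_Y,h_0)$ — with $D'_1=V(t'_0)$, $h_0\colon(Y,D'_1)\to(X_1,D_1)$ the (log, non-minimal) finite morphism, and $g_1$, $g'_Y$ the strict log morphisms induced by $\cO(X_1)\hookrightarrow O_L$ and $\cO(Y)\hookrightarrow O_{L'}$ — lies in $\cP''_{L,L'}$: the square of property (C) is cartesian because $\Spec O_{L'}\to Y$ and $\Spec O_L\to X_1$ are strict, so comparison with the underlying-scheme fibre product $\Spec O_{L'}=\Spec O_L\times_{X_1}Y$ is automatic. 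Finally $(X_1,D_1,g_1)\succeq(X,D,g)$ and, via $\psi$, $(Y,D'_1,g'_Y)\succeq(X',D',g')$, which gives both cofinality statements and hence the lemma.

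I expect the shrinking step to be the main obstacle: one must check that each locus to be excised — the non-smooth locus of the normalization $Y$, the (a priori non-reduced, non-smooth) locus of $V(t'_0)$, and the extraneous components of $D_1$ — avoids the image of $\Spec O_{L'}$ in $Y$, and that its image under the \emph{finite} morphism $Y\to X_1$ still avoids the image of $\Spec O_L$ in $X_1$, so that excising it leaves the projective limits unchanged. The features making this go through are: $O_{L'}$ is a DVR, so the image in $Y$ is just a codimension-zero and a codimension-one point, both regular since $Y$ is normal; $k$ is perfect, so regular $\Leftrightarrow$ smooth over $k$ for finite-type schemes; and a lift to $\cO(Y)$ of a uniformizer of $O_{L'}$ is automatically a uniformizer at $\mathfrak q_0$. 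Subsidiary care is needed to choose $X_1$ so that $\cO(X_1)\to O_L$ is ind-smooth (so normalization commutes with base change to $\Spec O_L$) and to check that $\mathfrak p$ has codimension one after enlarging; the remaining log-theoretic bookkeeping — strictness of the two legs of the square, so that the fs log fibre product coincides with the scheme one — is routine.
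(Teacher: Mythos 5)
Your argument is correct and shares its core with the paper's proof: both spread the finite flat morphism $\Spec O_{L'}\to\Spec O_L$ out to a finite level of the system and then arrange properties (A), (B), (C) by enlarging the base and discarding bad closed loci. The difference lies in how cofinality is extracted and in how much of the spreading-out step is actually justified. The paper performs the construction \emph{once}, obtaining a single tuple $(X_1,D_1,g_1,X'_1,D'_1,h_1)$, and then manufactures a cofinal subfamily $\cP'''_{L,L'}$ by base change $X'_1\times_{X_1}X$ over all $(X,D,g)\succeq(X_1,D_1,g_1)$; this is economical and makes the index system visibly codirected, but it tacitly assumes that the base-changed schemes $X'_1\times_{X_1}X$ again satisfy (A) and (B). You instead rerun the construction for each given pair $((X,D,g),(X',D',g'))$, building one tuple whose two projections dominate both inputs; this sidesteps the base-change issue at the cost of length, and your normalization-plus-excision argument (normalize $X_1$ in the function field, observe that the image of $\Spec O_{L'}$ in $Y$ consists of the generic point and a single codimension-one point at which $Y$ and $V(t'_0)$ are regular, use perfectness of $k$ to convert regularity into smoothness, and excise the finite images of the bad loci) is a genuine substantiation of what the paper compresses into ``we may assume that $X'_1$ is smooth, $D'_1$ is a smooth divisor and (A), (B) hold''. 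The two technical points you flag at the end --- choosing $X_1$ so that $\cO(X_1)\to O_L$ is regular (so that normalization commutes with the base change to $\Spec O_L$), and ensuring that $\mathfrak{p}$ has height one --- are real but resolvable, e.g.\ by taking $X_1$ to refine an affine \'etale neighbourhood of the codimension-one point presenting $L$, so that $\cO(X_1)\to O_L$ is ind-\'etale; they do not affect the soundness of the approach.
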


\begin{proof}
Since $\Spec O_{L'}$ is finite flat over $\Spec O_L$, 
for a sufficiently large element $(X_1,D_1,g_1)$ in $\cP'_L$ with respect to 
$\succeq$, we can find a $k$-variety $X'_1$ and a finite flat morphism 
$h_1:X'_1 \lra X_1$ with $\Spec O_L \times_{X_1} X'_1 = \Spec O_{L'}$, and 
after replacing $(X_1,D_1,g_1)$ by a larger element in $\cP'_L$, 
we may assume that $X'_1$ is smooth, 
$D'_1 := h_1^{-1}(D_1)_{\rm red}$ is a smooth divisor of $X'_1$ and 
$(X'_1,D'_1)$ satisfies the properties (A), (B) 
(with $L, O_L, X, D, t_0$ replaced 
by $L', O_{L'}, X'_1, D'_1, t'_0$ respectively). 
Then $h_1$ defines a finite morphism $(X'_1,D'_1) \lra (X_1,D_1)$ 
of log schemes, and the tuple $(X_1,D_1,g_1,X'_1,D'_1,h_1)$ defines an element 
in $\cP''_{L,L'}$. Then, if we consider the projective system $\cP'''_{L,L'}$ 
consisting of tuples $(X,D,g,X'_1 \times_{X_1} X,D'_1 \times_{D_1} D,
h_1 \times \id: (X'_1 \times_{X_1} X, D'_1 \times_{D_1} D) \lra (X,D))$ 
with $(X,D,g) \in \cP'_L$ and $(X,D,g) \succeq (X_1,D_1,g_1)$, 
we see that $\cP'''_{L.L'} \subseteq \cP''_{L.L'}$ and that 
the image of $\cP'''_{L.L'}$ in $\cP_L, \cP_{L'}$ is cofinal in 
$\cP_L, \cP_{L'}$ respectively. So the proof is finished. 
\end{proof}

\begin{rem}\label{rem:indep-t0}
The projective system $\cP'_L$ depends on the choice of $t_0$, 
but if we define the projevtive system $(\cP'_L)'$ in the same way 
from another uniformizer $t'_0$, we can take the projective system 
$(\cP'_L)''$ consisting of all the triples 
$(X,D,g)$ satisfying the property (A) and 
the property (B) for both $t_0$ and $t'_0$, and 
$(\cP'_L)''$ is cofinal in both $\cP'_L$ and $(\cP'_L)'$. 
The same is true also for the projective system $\cP''_{L,L'}$. 
\end{rem}

\begin{rem}\label{rem:lift}
We give several remarks on lifts of schemes, algebras and maps between them over $k$ appeared above to those over $W_n$. \\
(1) \, If we are given an affine smooth $k$-variety $X = \Spec A_0$ and 
a smooth divisor $D$ of $X$, there exists a lift $(\cX, \cD)$ of 
$(X,D)$ such that $\cX = \Spec A$ is an affine scheme over $W_n$ and 
$\cD$ is a smooth divisor of $\cX$. \\
(2) \, If, in (1), $D$ is defined by an element $t_0$ in $A_0$, 
there exists a lift $t \in A$ of $t_0$ such that 
$\cD$ is defined by $t$. \\
(3) \, If in addition to (2) we are given a smooth $k$-algebra 
$B_0$ and an etale map $\varphi_0: B_0[t_0] \lra A_0$ of $k$-algebras, 
there exists a lift $B$ of $B_0$ to a smooth algebra over $W_n$ and 
an etale map $\varphi: B_0[t] \lra A$ over $W_n$ lifting $\varphi_0$. \\
(4) \, If $(X,D), (X', D')$ are as in (1), (2) 
and we are given a finite morphism 
$h_0:(X',D') \lra (X,D)$ of log schemes over $k$, 
there exists a lift $(\cX, \cD)$ of 
$(X,D)$ and a lift $(\cX', \cD')$ of $(X',D')$ as in (1), (2), endowed with 
a finite morphism $h:(\cX',\cD') \lra (\cX, \cD)$ over $W_n$ which lifts 
$h_0$. 
In particular, there exist some $u \in (A')^{\times}$ and $e \in \N_{>0}$ 
such that the image of $t$ by the map $A = \Gamma(\cX, \cO_{\cX}) \lra 
\Gamma(\cX', \cO_{\cX'}) = A'$ is written as $u(t')^e$. 
\end{rem}

Next we define meromorphic Hodge-Witt sheaves using 
log crystalline site. Let $X \in \Sm$ and $D = \bigcup_{a=1}^m D_a$ a simple normal crossing divisor of $X$ with each $D_a$ smooth. We regard the pair $(X,D)$ also as a log scheme. 
For $1 \leq a \leq m$, the smooth divisor $D_a$ defines a sub log structure 
$N_a$ of the log structure on $(X,D)$. 
For any object $T := ((U,M_U) \os{\iota}{\hookrightarrow} (T,M_T))$ 
of the log crystalline site $((X,D)/W_n)_{\crys}$ with respect to Zariski topology, 
$\iota$ induces the isomorphism 
$M_T/\cO_T^{\times} \cong M_U/\cO_U^{\times}$, and 
the latter contains a subsheaf of monoids $(N_{X,a}|_U)/\cO_U^{\times}$ which is 
locally isomorphic to $\N_U$. 
Thus we can take locally a generator of $(N_{X,a}|_U)/\cO_U^{\times}$ and send it to 
$M_T/\cO_T^{\times}$. Then the 
ideal sheaf $(\cI_{(X,D),a})_T$ of $\cO_T$ locally generated by the image of 
a lift to $M_T$ of the element of $M_T/\cO_T^{\times}$ in the previous sentence 
 by the structure morphism $M_T \lra \cO_T$ of the log structure $M_T$ is 
well-defined globally. Moreover,  $(\cI_{(X,D),a})_T$'s for 
$T$ in $((X,D)/W_n)_{\crys}$ forms an ideal 
$\cI_{(X,D),a}$ of the structure sheaf $\cO_{(X,D)/W_n}$ of 
$((X,D)/W_n)_{\crys}$. 
Then, for $\b := (b_1, \dots, b_m) \in \N^m$, we define 
$\cI_{(X,D)}^{\otimes \b}$ by 
$\cI_{(X,D)}^{\otimes \b} := \bigotimes_{a=1}^m 
\cI_{(X,D),a}^{\otimes b_a}$, and define 
$\cI_{(X,D)}^{\otimes (-\b)}$ by 
$\cI_{(X,D)}^{\otimes (-\b)} := \Inthom_{\cO_{(X,D)/W_n}}
(\cI_{(X,D)}^{\otimes \b}, \cO_{(X,D)/W_n})$. 
Then we call the sheaves 
$R^iu_{X,*}\cI_{(X,D)}^{\otimes (-\b)} 
\, (\b \in \N^m, i \in \N)$, where 
$u_{X}: ((X,D)/W_n)_{\crys}^{\sim} \allowbreak \lra X_{\Zar}^{\sim}$ 
is the projection 
from the log crystalline topos to the Zariski topos, the meromorphic 
Hodge-Witt sheaves. It is a module over $R^0u_{X,*}\cO_{(X,D)/W_n} \cong 
W_n\cO_X$, where the isomorphism follows from 
the isomorphism of Illusie-Raunaud 
\cite[III (1.5.2)]{IR83} because $R^0u_{X,*}\cO_{(X,D)/W_n}$ is unchanged 
when we change $D$ to the empty divisor. 

We give a concrete description of $R^iu_{X,*}\cI_{(X,D)}^{\otimes (-\b)}$ 
when $(X,D)$ is lifable to a pair $(\cX, \cD)$ of a smooth scheme $\cX$ over 
$W_n$ and a relative simple normal crossing divisor $\cD = \bigcup_{a=1}^m 
\cD_a$ on it. In this case, the module with integrable log connection 
\begin{equation}\label{eq:a-intconn}
\nabla: \cO_{\cX}(\textstyle\sum_{a=1}^m b_a\cD_a) \lra 
\cO_{\cX}(\textstyle\sum_{a=1}^m b_a\cD_a) 
\otimes_{\cO_{\cX}} \Omega^1_{\cX}(\log \cD)
\end{equation}
defines a crystal on $((X,D)/W_n)_{\crys}$, which we denote by $\cF_{\b}$. 
Then, if we denote by 
$Q: ((X,D)/W_n)_{\rm Rcrys}^{\sim} \lra ((X,D)/W_n)_{\crys}^{\sim}$
 the canonical morphism from the restricted crystalline topos to 
the usual crystalline topos, we can prove in the same way as 
\cite[p.156]{NS08} (see also \cite[(5.3)]{T99}) that 
$Q^*\cI_{(X,D),a}$ is canonically isomorphic to $Q^*\cF_{-\e_a}$, 
where $\e_a \in \Z^m$ is the $a$-th canonical basis. 
Thus, for $\b \in \N^m$, we have the canonical isomorphism 
$Q^*\cI_{(X,D)}^{\otimes(-\b)} \cong Q^*\cF_{\b}$. 
So we have isomorphisms 
\begin{align*}
R^iu_{X,*}\cI_{(X,D)}^{\otimes(-\b)} & 
\cong R^i(u_X \circ Q)_*Q^*\cI_{(X,D)}^{\otimes(-\b)} 
\cong R^i(u_X \circ Q)_*Q^*\cF_{\b} 
\cong R^iu_{X,*}\cF_{\b} \\ 
& \cong 
\cH^i(\Omega^{\bullet}_{\cX/W_n}(\log \cD)(\textstyle\sum_{a=1}^m b_a\cD_a)), 
\end{align*}
where the first and the third isomorphisms follows from 
\cite[Cor.~1.6.2]{NS08} 
and we denoted the log de Rham complex associated to the 
integrable log connection \eqref{eq:a-intconn} by 
$\Omega^{\bullet}_{\cX/W_n}(\log \cD)(\textstyle\sum_{a=1}^m b_a\cD_a)$. 
If $\cX$ admits a local coordinate $t_1, \dots, t_d$ for some $d \geq m$ such that 
$\cD_a$ is the zero locus of $t_a$ for $1 \leq a \leq m$, the complex 
$\Omega^{\bullet}_{\cX/W_n}(\log \cD)(\textstyle\sum_{a=1}^m b_a\cD_a)$ 
is also written as 
$\tt^{-\b} \Omega^{\bullet}_{\cX/W_n}(\log )$, where 
$\tt^{\b} = \prod_{a=1}^m t_a^{-b_a}$ and 
$\Omega^{j}_{\cX/W_n}(\log) \, (j \in \N)$ means the sheaf of 
log differential forms with respect to $\dlog t_1, \dots, \dlog t_m$. 

Note that we can work also with log crystalline site 
$((X,D)/W_n)_{\crys, \et}$ with respect to etale topology. 
So we have the sheaf $\cI_{(X,D),\et}^{\otimes(-\b)}$ on it and 
the etale version of meromorphic Hodge-Witt sheaves 
$R^iu_{X,\et,*}\cI_{(X,D),\et}^{\otimes(-\b)}$, where 
$u_{X,\et}: ((X,D)/W_n)_{\crys, \et}^{\sim} \allowbreak \lra X_{\et}$ is 
the etale analogue of $u_X$. It is a module over 
$R^0u_{X,\et, *}\cO_{(X,D)/W_n, \et} \cong 
W_n\cO_{X,\et}$. When 
 $(X,D)$ is lifable to a pair $(\cX, \cD)$ as above, 
$R^iu_{X,\et,*}\cI_{(X,D),\et}^{\otimes(-\b)}$ is isomorphic to 
$\cH^i(\Omega^{\bullet}_{\cX/W_n}(\log \cD)(\textstyle\sum_{a=1}^m b_a\cD_a)_{\et})$, where $\Omega^{\bullet}_{\cX/W_n}(\log \cD)(\textstyle\sum_{a=1}^m b_a\cD_a)_{\et}$ is the etale version of the complex 
$\Omega^{\bullet}_{\cX/W_n}(\log \cD)(\textstyle\sum_{a=1}^m b_a\cD_a)$. 
When $\cX$ admits a local coordinate $t_1, \dots, t_d$ as above, 
this complex is also written as 
$\tt^{-\b} \Omega^{\bullet}_{\cX/W_n}(\log )_{\et}$, which is the etale version 
of $\tt^{-\b} \Omega^{\bullet}_{\cX/W_n}(\log )$. 

\begin{prop}\label{prop:inj}
Let $X \in \Sm$ and $D = \bigcup_{a=1}^m D_a$ a simple normal crossing divisor of $X$ with each $D_a$ smooth. Also, let $\b \in \N^m$, $i \in \N$. Then, \\
$(1)$ \, $R^iu_{X,*}\cI_{(X,D)}^{\otimes(-\b)}$ is a quasi-coherent 
$W_n\cO_X$-module and $R^iu_{X,\et, *}\cI_{(X,D),\et}^{\otimes(-\b)}$ 
is a quasi-coherent $W_n\cO_{X,\et}$-module. Also, 
we have the isomorphism 
$R^iu_{X,*}\cI_{(X,D)}^{\otimes(-\b)} \os{\cong}{\lra}
\epsilon_* R^iu_{X,\et, *}\cI_{(X,D),\et}^{\otimes(-\b)}$, 
where $\epsilon: X_{\et}^{\sim} \lra X_{\Zar}^{\sim}$ is the canonical map 
from the etale topos to the Zariski topos. 

In particular, when $X$ is affine, we have the isomorphism 
$$ 
H^i(((X,D)/W_n)_{\crys}, \cI_{(X,D)}^{\otimes(-\b)}) 
\cong \Gamma(X, R^iu_{X,*}\cI_{(X,D)}^{\otimes(-\b)}). $$

\noindent
$(2)$ \, Assume $\b - \e_a \in \N^m$. Then the canonical map 
\begin{equation}\label{eq:a-crys-inj}
R^iu_{X,*}\cI_{(X,D)}^{\otimes(-\b+\e_a)}
\lra R^iu_{X,*}\cI_{(X,D)}^{\otimes(-\b)} 
\end{equation}
is injective. \\
$(3)$ \, If $b_a$ is not divisible by $p$, the map 
\eqref{eq:a-crys-inj} is an isomorphism. \\
$(4)$ \, Put $U := X \setminus D$ and denote the canonical 
open immersion $U \hookrightarrow X$ by $j$. 
Then the canonical map 
$$ 
R^iu_{X,*}\cI_{(X,D)}^{\otimes(-\b)} 
\lra j_*R^iu_{U,*}\cO_{U/W_n} 
$$
$($where $\cO_{U/W_n}$ is the structure crystal on $(U/W_n)_{\crys}$ and 
$u_U: (U/W_n)_{\crys}^{\sim} \lra U_{\Zar}^{\sim}$ is the projection from 
the crystalline topos to the Zariski topos$)$ 
is injective. 
\end{prop}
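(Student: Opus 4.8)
The plan is to reduce all four assertions, via local liftability, to the case where $(X,D)$ carries a lift $(\cX,\cD)$ to a smooth pair over $W_n$ with a coordinate system $t_1,\dots,t_d$ such that $\cD_a=\{t_a=0\}$, and then to run a pole-order computation on the explicit log de Rham complex $\Omega^{\bullet}_{\cX/W_n}(\log\cD)(\textstyle\sum_a b_a\cD_a)$ computing $R^iu_{X,*}\cI_{(X,D)}^{\otimes(-\b)}$ (and its étale analogue). For (1): Zariski-locally, each term $\Omega^j_{\cX/W_n}(\log\cD)(\sum_a b_a\cD_a)$ is coherent over $\cO_{\cX}$, and the differential $\nabla$ of the complex is $W_n\cO_X$-linear, where $W_n\cO_X\cong R^0u_{X,*}\cO_{(X,D)/W_n}$ is realized as $\ker(\nabla\colon\cO_{\cX}\to\Omega^1_{\cX/W_n})$; hence the cohomology sheaves are $W_n\cO_X$-modules, and they are quasi-coherent over $W_n\cO_X$ since $\cO_{\cX}$ is module-finite over $\ker\nabla$ Zariski-locally — e.g. $t^{p^n}\in\ker\nabla$ for each local coordinate $t$, because $p^n=0$ on $\Omega^1_{\cX/W_n}$, so each $t_i$ is integral over $\ker\nabla$ — just as for the ordinary crystalline sheaves $R^iu_{X,*}\cO_{X/W_n}$ (Illusie--Raynaud). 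The étale version is identical, and the comparison with $\epsilon_*$ of the étale version holds because $R^{>0}\epsilon_*$ vanishes and $\epsilon_*\epsilon^*=\id$ on quasi-coherent sheaves, so $\epsilon_*$ is exact there and commutes with passing between the Zariski and étale complexes and with forming $\cH^i$. For affine $X$ the Leray spectral sequence for $u_X$ degenerates — higher cohomology of a quasi-coherent $W_n\cO_X$-module on the affine scheme $(X,W_n\cO_X)$ vanishes — giving the displayed identification of crystalline cohomology with global sections.

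Parts (2) and (3) concern maps of quasi-coherent Zariski sheaves, so injectivity and bijectivity can be checked on stalks at $x\in X$, and then, since $\cO_{\cX,x}\to\widehat{\cO}_{\cX,x}$ is faithfully flat, on the completed stalk complexes. We may assume $x\in\cD_a$, so $\widehat{\cO}_{\cX,x}\cong R[[t_a]]$ with $R=\widehat{\cO}_{\cD_a,x}$ a $W_n$-algebra, and every meromorphic log form expands as $\sum_j t_a^j\gamma_j$ with $\gamma_j=\alpha_j+\dlog t_a\wedge\beta_j$ independent of $t_a$, the $\alpha_j,\beta_j$ being forms on $\cD_a$ carrying the log and polar structure of the remaining components. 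A direct computation gives $d(t_a^j\gamma_j)=t_a^j\bigl(d'\alpha_j+\dlog t_a\wedge(j\alpha_j-d'\beta_j)\bigr)$, again homogeneous of $t_a$-degree $j$ ($d'$ the de Rham differential of $\cD_a$). Hence the operator $r$ deleting the $t_a^{-b_a}$-homogeneous component is a chain endomorphism and a retraction of the inclusion $\Omega^{\bullet}_{\cX/W_n}(\log\cD)(\sum_c(b_c-\delta_{ca})\cD_c)\hookrightarrow\Omega^{\bullet}_{\cX/W_n}(\log\cD)(\sum_c b_c\cD_c)$; so that inclusion is a split monomorphism of complexes, hence injective on $\cH^i$, which proves (2). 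Its cokernel is the $t_a^{-b_a}$-homogeneous quotient complex, which (up to the factor $t_a^{-b_a}$) has differential $\alpha+\dlog t_a\wedge\beta\mapsto d'\alpha+\dlog t_a\wedge(-b_a\alpha-d'\beta)$ and sits in a short exact sequence whose sub and quotient are (a shift of) the log de Rham complex of $\cD_a$ with the induced polar structure and whose connecting map on cohomology is multiplication by $-b_a$. When $p\nmid b_a$, $b_a$ acts invertibly on these $p^n$-torsion cohomology sheaves, so the quotient complex is acyclic, the inclusion is a quasi-isomorphism, and (3) follows.

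For (4): by the local description, $\varinjlim_{\b}R^iu_{X,*}\cI_{(X,D)}^{\otimes(-\b)}\cong\cH^i\bigl(\varinjlim_{\b}\Omega^{\bullet}_{\cX/W_n}(\log\cD)(\sum_a b_a\cD_a)\bigr)=\cH^i\bigl(j_{\cX,*}\Omega^{\bullet}_{(\cX\setminus\cD)/W_n}\bigr)\cong j_*R^iu_{U,*}\cO_{U/W_n}$, using that $j_{\cX,*}$ is exact on quasi-coherent sheaves and that $\cH^i$ commutes with filtered colimits. The canonical map in (4) is the structure map of this colimit; its transition maps $R^iu_{X,*}\cI_{(X,D)}^{\otimes(-\b)}\to R^iu_{X,*}\cI_{(X,D)}^{\otimes(-\b-\e_c)}$ are injective by (2), and filtered colimits of sheaves of abelian groups are exact, so the map into the colimit is injective, giving (4).

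The part I expect to cost the most is the bookkeeping in (2)--(3): one must set up the $t_a$-Laurent expansion so that it is genuinely compatible with the remaining log and polar structure along $\cD_c$ ($c\ne a$) — so that the coefficient forms $\gamma_j$ really lie in the log de Rham complex of $\cD_a$ with polar structure $\sum_{c\ne a}b_c(\cD_c\cap\cD_a)$ — and then check that $r$ and the twisted differential behave as claimed; after that, everything is formal. A secondary point requiring care is the quasi-coherence over $W_n\cO_X$ in (1), where the failure of $\nabla$ to be $\cO_{\cX}$-linear forces one to deduce it from the known coherence of ordinary crystalline cohomology rather than argue directly.
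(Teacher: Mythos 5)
Your proposal is correct in all four parts, but for (1)--(3) it takes a genuinely different route from the paper. For (1), the paper proves quasi-coherence by induction on $(\dim X, |\b|)$, peeling off one pole order at a time via the exact sequence \eqref{eq:exactseq} until it reaches the known quasi-coherence of the log Hodge--Witt sheaves $W_n\Omega^i_{(X,D)}$ (weight filtration plus the non-log case); you instead argue directly that the terms of the twisted log de Rham complex are coherent over $\cO_{\cX}$, that the differential is $\ker(d)$-linear, and that $\cO_{\cX}$ is module-finite over $\ker(d)\cong W_n\cO_X$ (your integrality observation $a^{p^n}\in\ker d$ applies to every $a\in\cO_{\cX}$, not only to coordinates, and finite type over $W_n$ then gives module-finiteness). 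This is more self-contained, at the cost of having to verify that localization at Teichm\"uller representatives matches Zariski localization, which is the same check as in the untwisted Illusie--Raynaud argument. For (2) and (3), the paper reduces \'etale-locally to the global product $(\Af^d_k,E)$ and there lifts cocycles by hand (for (2)) and writes down an explicit nullhomotopy of $b_a\cdot\id$ on the quotient complex (for (3)); you instead pass to the completed stalk $R[[t_a]]$, where the complex splits as a direct product of $t_a$-homogeneous subcomplexes, so the inclusion in \eqref{eq:a-crys-inj} is a split monomorphism of complexes (giving (2) immediately), and the degree $-b_a$ piece is seen to be acyclic when $p\nmid b_a$ from the long exact sequence whose connecting map is $\pm b_a$, invertible on $\Z/p^n$-modules (giving (3)). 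Your splitting argument is cleaner and unifies (2) and (3); what the paper's global reduction buys is that it never has to leave the algebraic (non-completed) setting. Part (4) is the same in both treatments: identify the colimit over $\b$ with $j_*R^iu_{U,*}\cO_{U/W_n}$ and use the injectivity of the transition maps from (2).

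One step in your (2)--(3) deserves to be written out: you cannot invoke faithful flatness of $\cO_{\cX,x}\to\widehat{\cO}_{\cX,x}$ directly to commute completion with $\cH^i$, because the differential is not $\cO_{\cX}$-linear. You must base-change along $\ker(d)_x\to\widehat{\ker(d)_x}$ instead, using that the differential is $\ker(d)$-linear and that $\cO_{\cX,x}$ is finite over $\ker(d)_x$ (so that $-\otimes_{\ker(d)_x}\widehat{\ker(d)_x}$ produces exactly the completed stalk complex, with the terms becoming the continuous differentials that admit your $t_a$-adic expansion). This is routine but it is precisely the point where your ``bookkeeping'' warning applies, and the conclusion is unaffected.
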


\begin{proof}
(1) \, First we prove the quasi-coherence of 
$R^iu_{X,*}\cI_{(X,D)}^{\otimes(-\b)}$ as 
$W_n\cO_X$-module. Since the assertion is local, 
we may assume that 
$(X,D)$ is lifable to a pair $(\cX, \cD)$ of a smooth scheme $\cX$ over 
$W_n$ and a relative simple normal crossing divisor $\cD = \bigcup_{a=1}^m 
\cD_a$ on it, and that 
$\cX$ admits a local coordinate $t_1, \dots, t_d$ for some $d \geq m$ with 
$\cD_a$ the zero locus of $t_a$ for $1 \leq a \leq m$. 
In this case, $R^iu_{X,*}\cI_{(X,D)}^{\otimes(-\b)} = 
\cH^i(\tt^{-\b} \Omega^{\bullet}_{\cX/W_n}(\log ))$. 
We prove the assertion by induction on the pair 
$(\dim X, |\b|)$. The case $|\b|=0$ follows from the equality 
$R^iu_{X,*}\cO_{(X,D)/W_n} = W_n\Omega^i_{(X,D)}$ 
(this is the definition given in \cite{HK94}): 
The quasi-coherence of it follows from the existence of weight filtration on 
$W_n\Omega^i_{(X,D)}$ (\cite[1.4]{M93}) and the quasi-coherence in non-logarithmic case. 
So we assume that $|\b| > 0$. 
We may assume that $b_1 > 0$ by change of indices, 
where $\b= (b_1, \dots, b_m)$. 
By considering the long exact sequence associated to the exact sequence 
\[ 
\xymatrix{
0 \ar[r] & 
\tt^{-\b+\e_1} \Omega^{\bullet}_{\cX/W_n}(\log ) 
\ar[r] & 
\tt^{-\b} \Omega^{\bullet}_{\cX/W_n}(\log )
\ar[r] & 
\dfrac{\tt^{-\b} \Omega^{\bullet}_{\cX/W_n}(\log )}{\tt^{-\b+\e_1} \Omega^{\bullet}_{\cX/W_n}(\log ) } \ar[r] & 0
}
\]
and the induction hypothesis, we see that it suffices to prove the quasi-coherence of 
$
\cH^i\left(\dfrac{\tt^{-\b} \Omega^{\bullet}_{\cX/W_n}(\log )}{\tt^{-\b+\e_1} \Omega^{\bullet}_{\cX/W_n}(\log )} \right)$ for any $i$. 

Let $\cY := \Af^d_{W_n}$, let $\cX \lra \cY$ 
be the map defined by the coordinate $t_1, \dots, t_d$ and let 
$\cX' \lra \cY'$ be the zero locus of $t_1$ of the previous map. 
Then we have the decomposition 
\begin{equation}\label{eq:decomp-omega0}
\tt^{-\b}\Omega^{j}_{\cX/W_n}(\log) = 
\tt^{-\b}(\cO_{\cX} \otimes_{\cO_{\cY'}} \Omega^j_{\cY'/W_n}(\log')) \oplus 
\tt^{-\b}(\cO_{\cX} \otimes_{\cO_{\cY'}} \Omega^{j-1}_{\cY'/W_n}(\log')) \wedge \dlog t_1 
\end{equation}
for any $j \in \N$, 
where $\Omega^*_{\cY'/W_n}(\log') \, (*=j-1,j)$ means the sheaf of 
log differential forms with respect to $\dlog t_2, \dots, \dlog t_m$. 
So we have the decomposition 
\begin{align}
\frac{\tt^{-\b}\Omega^{j}_{\cX/W_n}(\log)}{\tt^{-\b+\e_1}\Omega^{j}_{\cX/W_n}(
\log)} 
& = \tt^{-\b}(\cO_{\cX'} \otimes_{\cO_{\cY'}} \Omega^j_{\cY'/W_n}(\log')) \oplus 
\tt^{-\b}(\cO_{\cX'} \otimes_{\cO_{\cY'}} \Omega^{j-1}_{\cY'/W_n}(\log')) \wedge \dlog t_1 \label{eq:decomp-omega} \\
& = \tt^{-\b} (\Omega^j_{\cX'/W_n}(\log')) \oplus 
\tt^{-\b} (\Omega^{j-1}_{\cX'/W_n}(\log')) \wedge \dlog t_1 
\nonumber \\ 
& = 
t_1^{-b_1} (\tt^{-\b'} \Omega^j_{\cX'/W_n}(\log')) \oplus 
t_1^{-b_1} (\tt^{-\b'} \Omega^{j-1}_{\cX'/W_n}(\log')) \wedge \dlog t_1 
\nonumber 
\end{align}
for any $j \in \N$, where $\b' = (0,b_2, \dots, b_m)$. The differential of an element 
$t_1^{-b_1} \omega = t_1^{-b_1}\alpha + t_1^{-b_1}\beta \wedge \dlog t_1 \, 
(\alpha \in \tt^{-\b'}\Omega^j_{\cX'/W_n}(\log'), \beta \in 
\tt^{-\b'}\Omega^{j-1}_{\cX'/W_n}(\log'))$ 
in \eqref{eq:decomp-omega} is calculated as follows: 
\begin{align}
d(t_1^{-b_1}\omega) & = d(t_1^{-b_1}\alpha + t_1^{-b_1}\beta \wedge \dlog t_1) 
\label{eq:calcdiff} \\
& = -b_1t^{-b_1}\dlog t_1 \wedge \alpha + t_1^{-b_1}d\alpha 
+ t_1^{-b_1}d\beta \wedge \dlog t_1 \nonumber \\ & 
=  t_1^{-b_1}d\alpha + t^{-b_1}((-1)^{j+1}b_1\alpha + d \beta) \wedge 
\dlog t_1. 
\nonumber 
\end{align}
Hence there exists an exact sequence 
\begin{equation}\label{eq:exactseq}
\xymatrix{
0 \ar[r] & 
\tt^{-\b'} \Omega^{\bullet}_{\cX'/W_n}(\log')[-1] 
\ar[r]^{\phi} & 
\dfrac{\tt^{-\b}\Omega^{\bullet}_{\cX/W_n}(\log)}{\tt^{-\b+\e_1}
\Omega^{\bullet}_{\cX/W_n}(\log)} 
\ar[r]^{\psi} & 
\tt^{-\b'} \Omega^{\bullet}_{\cX'/W_n}(\log')
\ar[r] & 0,
}
\end{equation}
where $\phi$ is defined by 
$\beta \mapsto t_1^{-b_1}\beta \wedge \dlog t_1$ and 
$\psi$ is defined by 
$ t_1^{-b_1}\alpha + t_1^{-b_1}\beta \wedge \dlog t_1 
\mapsto \alpha$ via the identification \eqref{eq:decomp-omega}. 
By considering the long exact sequence associated to the exact sequence 
\eqref{eq:exactseq}, 
we are reduced to proving the quasi-coherence of 
$\cH^i(\tt^{-\b'} \Omega^{\bullet}_{\cX'/W_n}(\log'))$ for any $i$, 
and it follows from the induction hypothesis. So we are done. 
The quasi-coherence of 
$R^iu_{X,\et, *}\cI_{(X,D),\et}^{\otimes(-\b)}$ as 
$W_n\cO_{X,\et}$-module can be proven in the same way. 

Next we prove the isomorphism 
$R^iu_{X,*}\cI_{(X,D)}^{\otimes(-\b)} \os{\cong}{\lra}
\epsilon_* R^iu_{X,\et, *}\cI_{(X,D),\et}^{\otimes(-\b)}$. 
Let $\epsilon_{\crys}: ((X,D)/W_n)_{\crys, \et} \lra 
((X,D)/W_n)_{\crys}$ be the canonical map of topoi. Then we have 
the canonical map $\epsilon_{\crys}^*I_{(X,D)}^{\otimes(-\b)} \lra 
I_{(X,D),\et}^{\otimes(-\b)}$ and so the map 
\begin{equation}\label{eq:etzar0}
Ru_{X,*}I_{(X,D)}^{\otimes(-\b)} \lra 
Ru_*R\epsilon_{\crys,*}I_{(X,D),\et}^{\otimes(-\b)} 
= R\epsilon_*Ru_{X,\et,*}I_{(X,D),\et}^{\otimes(-\b)}. 
\end{equation}
We prove this is an isomorphism. To do so, 
we may work locally and so 
we may assume the existence of $(\cX, \cD)$ and $t_1, \dots, t_d$ as before. 
Then the map \eqref{eq:etzar0} is nothing but the map 
$$ \tt^{-\b}\Omega^{\bullet}_{\cX/W_n}(\log) 
\lra R\epsilon_* \tt^{-\b}\Omega^{\bullet}_{\cX/W_n}(\log)_{\et}, $$
which is an isomorphism because 
$\tt^{-\b}\Omega^j_{\cX/W_n}(\log)_{\et}$ is 
the quasi-coherent sheaf associated to 
$\tt^{-\b}\Omega^{\bullet}_{\cX/W_n}(\log)$. Hence 
the map \eqref{eq:etzar0} is an isomorphism. 
Then, since $R^ju_{X,\et,*}I_{(X,D),\et}^{\otimes(-\b)}$ is quasi-coherent, 
$R^h\epsilon_*R^ju_{X,\et,*}I_{(X,D),\et}^{\otimes(-\b)} \allowbreak = 0$ for 
$j,h \in \N$ with $h>0$ 
and so the map \eqref{eq:etzar0} induces the isomorphism  
\begin{equation}\label{eq:etzar}
R^iu_{X,*}I_{(X,D)}^{\otimes(-\b)} \os{\cong}{\lra}
\epsilon_*R^iu_{X,\et,*}I_{(X,D),\et}^{\otimes(-\b)}, 
\end{equation} 
as required. 

The last statement of (1) follows from the vanishing 
$H^h(X,R^iu_{X,*}\cI_{(X,D)}^{\otimes(-\b)}) = 0 \, (h > 0)$, 
which is a consequence of the quasi-coherence of 
$R^iu_{X,*}\cI_{(X,D)}^{\otimes(-\b)}$. 

(2) \, Without loss of generality, we may assume that $a=1$. 
Since the assertion is local, 
we may assume that there exists an etale morphism 
$X \lra \Af^d_k = \Spec k[t_{0,1}, \dots, t_{0,d}]$ 
for some $d \geq m$ such that each $D_{a'} \, (1 \leq a' \leq m)$ 
is the inverse image of the coordinate hyperplane 
$\{t_{0,a'}=0\}$ of $\Af^d_k$. 
Let $E$ be $\bigcup_{a'=1}^m \{t_{0,a'} = 0\}$.
By the isomorphism \eqref{eq:etzar} 
and the isomorphism \eqref{eq:etzar} with $\b$ replaced by $\b - \e_1$, 
it suffices to prove the injectivity of the map 
\begin{equation}\label{eq:a-crys-inj-et}
R^iu_{X,\et,*}\cI_{(X,D),\et}^{\otimes(-\b+\e_1)}
\lra R^iu_{X,\et,*}\cI_{(X,D),\et}^{\otimes(-\b)}. 
\end{equation}
Because the map \eqref{eq:a-crys-inj-et} is the restriction of 
the map \eqref{eq:a-crys-inj-et} with $(X,D)$ replaced by 
$(\Af^d_k, E)$, we are reduced to proving the injevitity of the latter map, 
namely, we may assume that $(X,D) = (\Af^d_k, E)$.

Let $\cX$ be $\Af^d_{W_n} = \Spec W_n[t_1, \dots, t_d]$ and let 
$\cD$ be $\bigcup_{a'=1}^m \{t_{a'} = 0\}$. Then the map 
\eqref{eq:a-crys-inj-et} is rewritten as 
\begin{equation}\label{eq:a-a-inj-sh}
\cH^i(\tt^{-\b+\e_1}\Omega^{\bullet}_{\cX/W_n}(\dlog)_{\et}) 
\lra H^i(t^{-\b}\Omega^{\bullet}_{\cX/W_n}(\dlog)_{\et}). 
\end{equation}
To prove the injectivity of the map 
\eqref{eq:a-a-inj-sh}, it suffices to prove  
the surjectivity of the map 
\begin{equation}\label{eq:a-a-surj}
\cH^i(\tt^{-\b+\e_1}\Omega^{\bullet}_{\cX/W_n}(\dlog)_{\et}) 
\lra \cH^i\left(\frac{\tt^{-\b}\Omega^{\bullet}_{\cX/W_n}(\dlog)_{\et}}{\tt^{-\b+\e_1}\Omega^{\bullet}_{\cX/W_n}(\dlog)_{\et}}\right)
\end{equation}
for any $i \in \N$. 
%

Let $\cX' = \Af^{d-1}_{W_n} = \Spec W_n[t_2, \dots, t_d]$. 
Then we have the decomposition 
\begin{equation}\label{eq:decomp-omega0-2}
\tt^{-\b}\Omega^i_{\cX/W_n}(\log)_{\et} = 
\tt^{-\b}(\cO_{\cX,\et} \otimes_{\cO_{\cX',\et}} 
\Omega^i_{\cX'/W_n}(\log')_{\et})
\oplus 
\tt^{-\b}(\cO_{\cX,\et} \otimes_{\cO_{\cX',\et}} \Omega^{i-1}_{\cX'/W_n}(\log')_{\et}) \wedge \dlog t_1 
\end{equation}
which is the etale version of 
\eqref{eq:decomp-omega0} in the setting here, and 
it induces the decomposition 
\begin{equation}\label{eq:decomp-omega-2}
\frac{\tt^{-\b}\Omega^{i}_{\cX/W_n}(\log)_{\et}}{\tt^{-\b+\e_1}\Omega^{i}_{\cX/W_n}(\log)_{\et}} = 
t_1^{-b_1} (\tt^{-\b'} \Omega^i_{\cX'/W_n}(\log')_{\et}) \oplus 
t_1^{-b_1} (\tt^{-\b'} \Omega^{i-1}_{\cX'/W_n}(\log')_{\et}) \wedge \dlog t_1, 
\end{equation}
which is the etale version of \eqref{eq:decomp-omega} in the setting here. 
The differential of an element 
$t_1^{-b_1} \omega = t_1^{-b_1}\alpha + t_1^{-b_1}\beta \wedge \dlog t_1 \, 
(\alpha \in \tt^{-\b'}\Omega^i_{\cX'/W_n}(\log')_{\et}, \beta \in 
\tt^{-\b'}\Omega^{i-1}_{\cX/W_n}(\log')_{\et})$ 
in \eqref{eq:decomp-omega-2} is calculated as in 
\eqref{eq:calcdiff} (with $j$ replaced by $i$). Hence 
it is a cochain if and only if 
\[ d\alpha = 0, \quad (-1)^{i+1}a\alpha + d \beta = 0  \] 
in $\Omega^{i+1}_{\cX'/W_n}(\log')_{\et}$. Now assume that the element 
$t_1^{-b_1} \omega$ is a cochain. 
Then, if we regard $\alpha, \beta$ as an element of 
$\cO_{\cX,\et} \otimes_{\cO_{\cX',\et}} \Omega^{i}_{\cX'/W_n}(\log')_{\et}$, 
$\cO_{\cX,\et} \otimes_{\cO_{\cX',\et}} \Omega^{i-1}_{\cX'/W_n}(\log')_{\et}$
 respectively via the map 
$\Omega^{j}_{\cX'/W_n}(\log')_{\et} \lra 
\cO_{\cX,\et} \otimes_{\cO_{\cX',\et}} \Omega^{j}_{\cX'/W_n}(\log')_{\et} \, (j=i, i-1)$ induced by the projection $\cX \lra \cX'$, 
we see from the same calculation as that in \eqref{eq:calcdiff} that 
the element $t_1^{-b_1} \omega$ defines a cochain 
in \eqref{eq:decomp-omega0-2} which lifts 
the original cochain $t_1^{-b_1} \omega$ in \eqref{eq:decomp-omega-2}. 
Thus the map \eqref{eq:a-a-surj} is surjective and so we are done. 

(3) \, It is essentially shown in \cite[Lem.~6.1]{BS19} and \cite[Prop.~12]{IY22}, but we repeat the argument. Without loss of generality, we may assume that 
$a=1$. We may assume also that 
there exists a pair $(\cX, \cD)$ and its local coordinate 
$t_1, \dots, t_d$ as in the proof of (1). In this setting, 
it suffices to prove that $\cH^i\left( \frac{\tt^{-\b}\Omega^{\bullet}_{\cX/W_n}(\log)}{ \tt^{-\b+\e_1}\Omega^{\bullet}_{\cX/W_n}(\log)} \right) = 0$ for any $i \in \N$, namely, the complex $\frac{\tt^{-\b}\Omega^{\bullet}_{\cX/W_n}(\log)}{t^{-\b+\e_1}\Omega^{\bullet}_{\cX/W_n}(\log)}$ is acyclic. 
Define $\cX'$ be as in the proof of (1) and 
consider the map 
$$ 
\pi^{j}: \frac{\tt^{-\b}\Omega^{j}_{\cX/W_n}(\log)}{t^{-\b+\e_1}\Omega^{j}_{\cX/W_n}(\log)} \twoheadrightarrow 
t_1^{-b_1}(\tt^{-\b'}\Omega^{j-1}_{\cX'/W_n}(\log')) \wedge \dlog t_1 
\cong t_1^{-b_1}(\tt^{-\b'}\Omega^{j-1}_{\cX'/W_n}(\log')) 
\hookrightarrow 
\frac{\tt^{-\b}\Omega^{j-1}_{\cX/W_n}(\log)}{t^{-\b+\e_1}\Omega^{j}_{\cX/W_n}(\log)},
$$
where the first map is the projection (via the identification \eqref{eq:decomp-omega}), the second isomorphism is the map $\eta \wedge \dlog t_1 \mapsto (-1)^j\eta$ and the third map is the inclusion (via the identification \eqref{eq:decomp-omega}). 
Then, for $t_1^{-b_1}\omega \in 
\frac{\tt^{-\b}\Omega^{j}_{\cX/W_n}(\log)}{t^{-\b+\e_1}\Omega^{j}_{\cX/W_n}(\log)} = t_1^{-b_1} (\tt^{-\b'} \Omega^j_{\cX'/W_n}(\log')) \oplus 
t_1^{-b_1} (\tt^{-\b'} \Omega^{j-1}_{\cX'/W_n}(\log')) \wedge \dlog t_1$ 
with 
$t_1^{-b_1}\omega = t_1^{-b_1}\alpha + t_1^{-b_1}\beta \wedge \dlog t \, 
(\alpha \in \tt^{-\b'} \Omega^j_{\cX'/W_n}(\log'), 
\beta \in \tt^{-\b'} \Omega^{j-1}_{\cX'/W_n}(\log'))$, 
\begin{align*}
d \circ \pi^j(t_1^{-b_1}\omega) & = d((-1)^jt_1^{-b_1}\beta) = (-1)^{j+1}b_1t^{-b_1}\dlog t_1 \wedge \beta + (-1)^jt_1^{-b_1}d\beta \\ & = (-1)^jt_1^{-b_1}d\beta + b_1t^{-b_1}\beta \wedge \dlog t_1, 
\end{align*}
\begin{align*}
\pi^{j+1} \circ d (t_1^{-b_1}\omega) & = 
\pi^{j+1}(
 t_1^{-b_1}d\alpha + t_1^{-b_1}((-1)^{j+1}b_1\alpha + d \beta) 
\wedge \dlog t_1) \\ 
& = b_1 t_1^{-b_1}\alpha + (-1)^{j-1}t_1^{-b_1}d \beta. 
\end{align*}
Thus we see that $d \circ \pi^j + \pi^{j+1} \circ d = b_1 \cdot \id$ and so 
$\{\pi^j\}_j$ gives a homotopy between the zero map and the endomorphism 
$b_1 \cdot \id$ on 
the complex 
$\frac{\tt^{-\b}\Omega^{\bullet}_{\cX/W_n}(\log)}{t^{-\b+\e_1}\Omega^{\bullet}_{\cX/W_n}(\log)}$. 
Since $b_1$ is not divisible by $p$, it implies the 
acyclicity of this complex, as required. 

(4) \, By (1), it suffices to prove that the canonical map 
$\varinjlim_{\b} R^iu_{X.*}\cI_{(X,D)}^{\otimes (-\b)} \lra 
j_*R^iu_{U,*}\cO_{U/W_n}$ is an isomorphism. To check this, 
we may assume that 
there exists a pair $(\cX, \cD)$ and its local coordinate 
$t_1, \dots, t_d$ as in the proof of (1). Then, if we put 
$\cU := \cX \setminus \cD$ and denote the open immersion 
$\cU \hookrightarrow \cX$ also by $j$, the above map is rewritten as 
$\varinjlim_{\b} \cH^i(\tt^{-\b}\Omega^{\bullet}_{\cX/W_n}(\log))
\lra \cH^i(j_*\Omega^{\bullet}_{\cU/W_n})$, which is obviously 
an isomorphism. 
\end{proof}

In the sequel, we denote 
$H^i(((X,D)/W_n)_{\crys}, \cI_{(X,D)}^{\otimes (-\b)})$ simply by 
$H^i_{\crys}(\cI_{(X,D)}^{\otimes (-\b)})$. 

\medskip 


Now let $L \in \Phi$ and let 
$\cP_L$ be the projective system consisting of the triples 
$(X,D,g)$ as in the beginning of this section. 
%
%
If put $U := X \setminus D$ for $(X,D,g) \in \cP_L$, 
we have the canonical isomorphism 
$H^i_{\crys}(U/W_n) \cong 
\Gamma(U, W_n\Omega_U^i)$ of 
Illusie-Raynaud \cite[III (1.5.2)]{IR83}. Hence we have the isomorphism 
$ \varinjlim_{(X,D,g) \in \cP_L} H^i_{\crys}(U/W_n) \cong 
W_n\Omega^i_L$. 

\begin{prop}
For $b \in \N$, 
$\varinjlim_{(X,D,g) \in \cP_L}H^i_{\crys}(\cI_{(X,D)}^{\otimes(-b)})$ is a 
subgroup of $\varinjlim_{(X,D,g) \in \cP_L} H^i_{\crys}(U/W_n) \cong 
W_n\Omega^i_L.$ 
\end{prop}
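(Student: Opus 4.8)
The plan is to produce, for each $(X,D,g) \in \cP_L$ with $U := X \setminus D$, a natural injection $H^i_{\crys}(\cI_{(X,D)}^{\otimes(-b)}) \hookrightarrow H^i_{\crys}(U/W_n)$, to check that these are compatible with the transition maps of the two inductive systems, and then to pass to the colimit. Since filtered colimits of abelian groups are exact, the colimit of these injections is an injection
$$ \varinjlim_{(X,D,g) \in \cP_L}H^i_{\crys}(\cI_{(X,D)}^{\otimes(-b)}) \hookrightarrow \varinjlim_{(X,D,g) \in \cP_L}H^i_{\crys}(U/W_n) \cong W_n\Omega^i_L, $$
the last isomorphism being the one of Illusie-Raynaud recalled above; so the statement follows once the injection at each level and its compatibility with transitions are established.

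To build the injection at a fixed $(X,D,g) \in \cP_L$: here $X = \Spec A_0$ is affine and $D$ is a principal divisor, so $U$ is affine and lies in $\Sm$. By the last assertion of Proposition \ref{prop:inj}(1) (with $m = 1$ and $\b = (b)$), $H^i_{\crys}(\cI_{(X,D)}^{\otimes(-b)}) = \Gamma(X, R^iu_{X,*}\cI_{(X,D)}^{\otimes(-b)})$; and since $R^ju_{U,*}\cO_{U/W_n}$ is quasi-coherent (Proposition \ref{prop:inj}(1) applied to $(U,\emptyset)$) and $U$ is affine, its higher Zariski cohomology vanishes, so $H^i_{\crys}(U/W_n) = \Gamma(U, R^iu_{U,*}\cO_{U/W_n})$. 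Now take global sections over $X$ of the injective map of sheaves
$$ R^iu_{X,*}\cI_{(X,D)}^{\otimes(-b)} \hookrightarrow j_*R^iu_{U,*}\cO_{U/W_n} $$
furnished by Proposition \ref{prop:inj}(4), where $j:U \hookrightarrow X$; since $\Gamma(X,-)$ is exact and $\Gamma(X, j_*\cF) = \Gamma(U,\cF)$, this produces the desired injection $H^i_{\crys}(\cI_{(X,D)}^{\otimes(-b)}) \hookrightarrow H^i_{\crys}(U/W_n)$.

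The step I expect to be the main obstacle is the compatibility with transition maps. For this it is convenient to work with the cofinal subsystem $\cP'_L$ of Section 2 (so that the colimits are unchanged): for triples in $\cP'_L$ the divisor $D$ is cut out by the fixed uniformizer $t_0$, and the condition $\varphi \circ g' = g$ forces a morphism $\varphi : (X',D',g') \to (X,D,g)$ to send the equation $t_0 \in A_0$ of $D$ to the equation $t_0 \in A'_0$ of $D'$; hence $\varphi^*D = D'$, so $\varphi^*_{\crys}\cI_{(X,D)} \cong \cI_{(X',D')}$ and the transition map $H^i_{\crys}(\cI_{(X,D)}^{\otimes(-b)}) \to H^i_{\crys}(\cI_{(X',D')}^{\otimes(-b)})$ of the left-hand system is defined. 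The commutativity of the resulting square — relating this map, the pullback $H^i_{\crys}(U/W_n) \to H^i_{\crys}(U'/W_n)$, and the two injections constructed above — follows from the functoriality in the pair $(X,D)$ of the projection $u_X$, of the ideal $\cI_{(X,D)}$, and of the map of Proposition \ref{prop:inj}(4), all of which may be checked locally by means of the explicit lifted description used in the proof of that proposition. Granting this, the colimit argument of the first paragraph shows that $\varinjlim_{(X,D,g) \in \cP_L}H^i_{\crys}(\cI_{(X,D)}^{\otimes(-b)})$ injects into $W_n\Omega^i_L$, i.e. is a subgroup of it.
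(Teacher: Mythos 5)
Your proof is correct and follows essentially the same route as the paper's: identify $H^i_{\crys}(\cI_{(X,D)}^{\otimes(-b)})$ and $H^i_{\crys}(U/W_n)$ with global sections over $X$ of $R^iu_{X,*}\cI_{(X,D)}^{\otimes(-b)}$ and $j_*R^iu_{U,*}\cO_{U/W_n}$ via Proposition \ref{prop:inj}(1), apply the sheaf-level injectivity of Proposition \ref{prop:inj}(4), and pass to the filtered colimit. The extra care you take with the transition maps (via the cofinal subsystem $\cP'_L$) is a reasonable elaboration of a point the paper leaves implicit, not a divergence in method.
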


\begin{proof}
Since we have the equality 
$H^i_{\crys}(\cI_{(X,D)}^{\otimes(-b)}) = 
\Gamma(X, R^iu_{X,*}\cI_{(X,D)}^{\otimes(-b)})$ 
by Proposition \ref{prop:inj}(1) and the equalities 
$H^i_{\crys}(U/W_n) \allowbreak = \Gamma(U, R^iu_{X,*}\cO_{U/W_n}) 
\allowbreak = \Gamma(X, j_*R^iu_{X,*}\cO_{U/W_n})$ (where $j:U \hookrightarrow X$ is the canonical open immersion), the assertion follows from 
Proposition \ref{prop:inj}(4). 
\end{proof}

By this proposition, we can define filtrations on $W_n\Omega^i_L$ in the following way: 

\begin{defn}
$(1)$ \, We define the filtration 
$\{\Fil'_r W_n\Omega^i_L \}_{r \in \Q_{\geq 0}}$ on 
$ W_n\Omega^i_L$ 
as follows: 
\[ 
\Fil'_rW_n\Omega^i_L = 
\left\{ 
\begin{aligned}
& W_n\Omega^i_{O_L}  & (r=0), \\
& 
\varinjlim_{(X,D) \in \cP_L}
H^i_{\crys}(\cI_{(X,D)}^{\otimes(-\lceil r \rceil +1)}) & (r>0).  
\end{aligned}
\right. 
\] 
$(2)$ \, We define the filtration 
$\{\Fil_r W_n\Omega^i_L\}_{r \in \Q_{\geq 0}}$ on 
$ W_n\Omega^i_L$ by 
$$ \Fil_r W_n\Omega^i_L := 
\Fil'_{p\lceil r \rceil}W_n\Omega^i_L. $$ 
\end{defn}

\begin{rem}
The filtration $\Fil$ in (2) is just a renumbering of the filtration $\Fil'$ in (1). 
The filtration $\Fil$ is the one which is compatible with the filtrations of 
Koizumi in \cite{K23}, while the filtration $\Fil'$ is also useful because, if we use it, the proof below looks more parallel to that in Section 4 of \cite{K23}. 
\end{rem}

To prove that the filtrations $\Fil', \Fil$ define ramification filtrations 
on $W_n\Omega^i$, we prepare two lemmas which have essentially appeared in 
\cite{K23}: 



\begin{lem}[{\rm cf. \cite[Lem.~4.2]{K23}}]\label{lem:4.2}
Let $A$ be a smooth algebra over $W_n$ which admits an etale morphism 
$B[t] \lra A$ for some smooth algebra $B$ over $W_n$. Then, for 
$\omega \in \Omega^i_{A/W_n}$, 
it belongs to $t\Omega^i_{A/W_n}(\log t)$ if and only if 
$\omega \wedge \dlog t \in \Omega^{i+1}_{A/W_n}$. 
\end{lem}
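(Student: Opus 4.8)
The plan is to work Zariski-locally and reduce to an explicit coordinate computation. Since $B[t] \lra A$ is \'etale, the module $\Omega^1_{A/W_n}$ is free over $A$ with basis $dt, dt_2, \dots, dt_d$ coming from a coordinate system $t, t_2, \dots, t_d$ of $A$ (more precisely $dt$ together with the pullbacks of a basis of $\Omega^1_{B/W_n}$; after localizing on $B$ we may assume $B$ has an \'etale coordinate as well, so this basis exists). Then $\Omega^i_{A/W_n}$ is free with basis the exterior monomials in these differentials, and $\Omega^i_{A/W_n}(\log t)$ is obtained by replacing $dt$ by $\dlog t = dt/t$; equivalently $\Omega^i_{A/W_n}(\log t) = \Omega^i_{A/W_n} + \dlog t \wedge \Omega^{i-1}_{A/W_n}$ inside $\Omega^i_{A \otimes_{W_n}\Frac}$ or, better, inside $\Omega^i_{A[1/t]/W_n}$. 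So $t\Omega^i_{A/W_n}(\log t) = t\Omega^i_{A/W_n} + dt \wedge \Omega^{i-1}_{A/W_n}$.

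First I would decompose $\omega \in \Omega^i_{A/W_n}$ uniquely as $\omega = \alpha + dt \wedge \beta$ where $\alpha \in \Omega^i_{A'/W_n} \otimes_{A'} A$ and $\beta \in \Omega^{i-1}_{A'/W_n}\otimes_{A'} A$ involve only $dt_2, \dots, dt_d$; here $A'$ is thought of via the coordinates but concretely the decomposition is just the splitting of the free module according to whether $dt$ appears. Then the condition $\omega \wedge \dlog t \in \Omega^{i+1}_{A/W_n}$, i.e. $(\alpha + dt\wedge\beta)\wedge \frac{dt}{t} = \frac{1}{t}\,\alpha \wedge dt \in \Omega^{i+1}_{A/W_n}$, is equivalent to $\alpha \wedge dt \in t\,\Omega^{i+1}_{A/W_n}$, which (since $\alpha \wedge dt$ is a sum of free-basis monomials each containing $dt$) is equivalent to $\alpha \in t\,(\Omega^i_{A'/W_n}\otimes_{A'} A)$. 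On the other hand $\omega \in t\Omega^i_{A/W_n}(\log t) = t\Omega^i_{A/W_n} + dt\wedge\Omega^{i-1}_{A/W_n}$ says precisely: $\alpha \in t\,(\Omega^i_{A'/W_n}\otimes A)$ and $\beta$ arbitrary. Comparing the two, both conditions are exactly ``$\alpha \in t\cdot(\text{no-}dt\text{ part})$'', so the equivalence falls out.

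The one genuine point requiring care — and the step I expect to be the main obstacle — is justifying the existence of the local coordinate, i.e.\ that after a Zariski localization the \'etale map $B[t]\lra A$ together with a local coordinate on $B$ produces an honest regular sequence $t, t_2, \dots, t_d$ generating the maximal ideals and giving a free basis $dt, dt_2, \dots, dt_d$ of $\Omega^1_{A/W_n}$, all flat over $W_n$. Since $B$ is smooth over $W_n$, Zariski-locally it admits an \'etale map $W_n[t_2,\dots,t_d] \lra B$, hence $W_n[t,t_2,\dots,t_d] \lra B[t] \lra A$ is \'etale, giving the coordinate system and the freeness of $\Omega^1_{A/W_n}$ on $dt, dt_2, \dots, dt_d$; flatness over $W_n$ is automatic from smoothness. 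The decomposition $\Omega^i_{A/W_n} = (\Omega^i_{A'/W_n}\otimes_{A'}A) \oplus (\Omega^{i-1}_{A'/W_n}\otimes_{A'}A)\wedge dt$ (with $A' := A/tA$ playing no role — really $A'$ here is just $W_n[t_2,\dots,t_d]$-side, and one uses the freeness) and the identity $t\Omega^i_{A/W_n}(\log t) = t\Omega^i_{A/W_n} + dt\wedge\Omega^{i-1}_{A/W_n}$ are then the two formulas that feed the computation above; everything else is the bookkeeping already sketched. I would also remark that this is the \'etale-local analogue of the decomposition \eqref{eq:decomp-omega0} used in Proposition \ref{prop:inj}, so the reader has already seen the mechanism.
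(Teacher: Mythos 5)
Your proof is correct and follows essentially the same route as the paper: decompose $\Omega^i_{A/W_n}$ (resp.\ $\Omega^i_{A/W_n}(\log t)$) into the component not involving $dt$ and the component involving $dt$ (resp.\ $\dlog t$), and observe that both conditions amount to divisibility by $t$ of the $dt$-free component. The only difference is that the paper gets this decomposition directly from the etale map $B[t] \lra A$, namely $\Omega^i_{A/W_n} = (A \otimes_B \Omega^i_{B/W_n}) \oplus (A \otimes_B \Omega^{i-1}_{B/W_n}) \wedge dt$, so the further Zariski localization on $B$ that you single out as the main point requiring care is in fact unnecessary (though harmless).
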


\begin{proof}
Note that we have the decompositions 
\begin{align*}
& \Omega^i_{A/W_n} = 
(A \otimes_B \Omega^i_{B/W_n}) 
\oplus (A \otimes_B \Omega^{i-1}_{B/W_n}) \wedge dt, \\
&  \Omega^i_{A/W_n}(\log t) = 
(A \otimes_B \Omega^i_{B/W_n}) 
\oplus (A \otimes_B \Omega^{i-1}_{B/W_n}) \wedge \dlog t. 
\end{align*}
Then, if $\omega \in t\Omega^i_{A/W_n}(\log t)$, it has the form 
$t\omega_1 + \omega_2 \wedge dt$ for some $\omega_1 \in A \otimes_B \Omega^i_{B/W_n}, 
\omega_2 \in A \otimes_B \Omega^{i-1}_{B/W_n}$, and so 
$\omega \wedge \dlog t = \omega_1 \wedge dt \in \Omega^{i+1}_{A/W_n}$. 
Conversely, if $\omega = \omega_1 + \omega_2 \wedge dt$ \, 
$(\omega_1 \in A \otimes_B \Omega^i_{B/W_n}, \omega_2 \in A \otimes_B 
\Omega^{i-1}_{B/W_n})$ 
satisfies the condition $\omega \wedge \dlog t \in \Omega^{i+1}_{A/W_n}$, 
it implies that $t^{-1}\omega_1 \wedge dt \in  \Omega^{i+1}_{A/W_n}$ and so 
$t^{-1}\omega_1 \in A \otimes_B \Omega^i_{B/W_n}$. Hence 
$\omega \in t\Omega^i_{A/W_n}(\log t)$. 
\end{proof}

\begin{lem}[{\rm cf. \cite[Lem.~4.3, 4.4]{K23}}]\label{lem:4.3}

Let $A$ $($resp. $A')$ be a smooth algebra over $W_n$ which admits an etale morphism 
$B[t] \lra A$ $($resp. $B'[t'] \lra A')$ 
for some smooth algebra $B$ $($resp. $B')$ over $W_n$. Also, 
let $f: \Spec A' \lra \Spec A$ be a finite morphism 
such that $f^*t \in A'$ has the form 
$u(t')^e$ for some $u \in (A')^{\times}$ and $e \in \N_{\geq 1}$. 
Also, for $j \in \N$, let 
\begin{align*}
& f_*^j: \Omega^j_{A'[(t')^{-1}]/W_n} \lra \Omega^j_{A[t^{-1}]/W_n} 
 \quad (j \in \N)
\end{align*}
be the pushforward map associated to $f$. 
Then we have the following: \\
$(1)$ \, $f_*^i(t'\Omega^i_{A'/W_n}(\log t')) \subseteq 
t\Omega^i_{A/W_n}(\log t)$. \\ 
$(2)$ \, For $r \in \Q_{>0}$, 
$f_*^i((t')^{-\lceil r \rceil + 1}\Omega^i_{A'/W_n}(\log t')) \subseteq 
t^{-\lceil r/e \rceil + 1}\Omega^i_{A/W_n}(\log t)$. 
\end{lem}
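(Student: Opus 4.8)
The plan is to reduce everything to the local computation already carried out in Lemma \ref{lem:4.2}, and to handle the pushforward map by exploiting that it is $\Omega^\bullet_{A[t^{-1}]/W_n}$-linear (projection formula for the trace). For part (1), let $\omega \in t'\Omega^i_{A'/W_n}(\log t')$; by Lemma \ref{lem:4.2} applied to $A'$, this is equivalent to $\omega \wedge \dlog t' \in \Omega^{i+1}_{A'/W_n}$. I want to conclude $f^i_*(\omega) \wedge \dlog t \in \Omega^{i+1}_{A/W_n}$, which by Lemma \ref{lem:4.2} applied to $A$ gives $f^i_*(\omega) \in t\Omega^i_{A/W_n}(\log t)$. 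The key identity is $\dlog t = \dlog(u(t')^e) = \dlog u + e\,\dlog t'$, so $\omega \wedge \dlog t = e(\omega \wedge \dlog t') + \omega \wedge \dlog u$. Since $u \in (A')^\times$, $\dlog u = du/u \in \Omega^1_{A'/W_n}$, hence both terms on the right lie in $\Omega^{i+1}_{A'/W_n}$. Now $f^i_*$ is compatible with wedge products against forms pulled back from $A[t^{-1}]$ (the projection formula $f^{i+1}_*(\eta \wedge f^*\xi) = f^i_*(\eta)\wedge\xi$), and $\dlog t = f^*(\dlog t)$; therefore $f^i_*(\omega)\wedge \dlog t = f^{i+1}_*(\omega \wedge \dlog t) \in f^{i+1}_*(\Omega^{i+1}_{A'/W_n}) \subseteq \Omega^{i+1}_{A/W_n}$, the last inclusion being the statement that the pushforward of a finite morphism preserves the (non-meromorphic) differential forms — this is exactly the content of \cite[Lem.~4.3]{K23} (or its Hodge–Witt incarnation, the compatibility proved in the Appendix).

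For part (2), I would argue similarly but now keeping track of how many powers of the uniformizers are involved. Let $\omega \in (t')^{-\lceil r\rceil + 1}\Omega^i_{A'/W_n}(\log t')$; write $m := \lceil r\rceil - 1 \geq 0$, so $(t')^m\omega \in \Omega^i_{A'/W_n}(\log t')$, and after replacing $\omega$ by $(t')^m\omega$ the problem becomes: if $\omega \in \Omega^i_{A'/W_n}(\log t')$ then $f^i_*((t')^{-m}\omega) \in t^{-\lceil r/e\rceil + 1}\Omega^i_{A/W_n}(\log t)$. Using $f^*t = u(t')^e$, I have $(t')^{-m} = u^{\lceil m/e\rceil} t^{-\lceil m/e\rceil}\cdot (t')^{e\lceil m/e\rceil - m}$, where $e\lceil m/e\rceil - m \geq 0$, so $(t')^{-m}\omega = t^{-\lceil m/e\rceil}\cdot\big(u^{\lceil m/e\rceil}(t')^{e\lceil m/e\rceil - m}\omega\big)$ with the parenthesized form lying in $\Omega^i_{A'/W_n}(\log t')$. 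Since $t^{-\lceil m/e\rceil}$ is the pullback of an element of $A[t^{-1}]$, the projection formula gives $f^i_*((t')^{-m}\omega) = t^{-\lceil m/e\rceil}\cdot f^i_*\big(u^{\lceil m/e\rceil}(t')^{e\lceil m/e\rceil-m}\omega\big)$, and the pushforward of a form in $\Omega^i_{A'/W_n}(\log t')$ lands in $\Omega^i_{A/W_n}(\log t)$ — this is the logarithmic analogue of the inclusion used in part (1), and can be deduced from part (1) together with linearity (a form in $\Omega^i_{A'/W_n}(\log t')$ is, up to the subspace $\Omega^i_{A'/W_n}$, of the shape $t'\cdot(\text{something in }t'^{-1}\Omega^i(\log t'))$; cleaner is to note $\Omega^i_{A'/W_n}(\log t')$ decomposes as in Lemma \ref{lem:4.2} and push each summand). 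Finally I must check $\lceil m/e\rceil \leq \lceil r/e\rceil - 1$, i.e. $\lceil(\lceil r\rceil - 1)/e\rceil + 1 \leq \lceil r/e\rceil$; this follows because $\lceil r\rceil - 1 < r$ and, writing $r = qe + s$ with $0 < s \leq e$ (so $\lceil r/e\rceil = q+1$), one has $\lceil r\rceil - 1 \leq \lceil r\rceil - 1 \leq qe + e - 1 < (q+1)e$ hence $\lceil(\lceil r\rceil-1)/e\rceil \leq q = \lceil r/e\rceil - 1$... wait, I must be careful at the integer boundary $r \in e\Z$, but there $\lceil r\rceil - 1 = r - 1 < r$ and $\lceil(r-1)/e\rceil = r/e$ while $\lceil r/e\rceil = r/e$, so the inequality $\lceil(r-1)/e\rceil + 1 \leq r/e$ would fail; however in that case $r$ itself is an integer so $\lceil r\rceil = r$ and one rechecks that the correct count of $t'$-powers is $m = r-1$, and the needed bound is still $\lceil(r-1)/e\rceil \leq \lceil r/e\rceil - 1 = r/e - 1$, which does hold since $(r-1)/e \leq r/e - 1/e \leq r/e - 1$ only when $e \geq 1$ — fine, but only loosely; the honest check is the elementary inequality $\lceil(\lceil r\rceil - 1)/e\rceil \leq \lceil r/e \rceil - 1$ for $r \in \Q_{>0}$, $e \in \N_{\geq 1}$, which I would verify by the division algorithm.

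The main obstacle I expect is not any single computation but rather assembling cleanly the three ingredients about the pushforward map $f^\bullet_*$: (i) that it is graded $\Omega^\bullet_{A[t^{-1}]/W_n}$-linear (projection formula), (ii) that it sends $\Omega^i_{A'/W_n}$ into $\Omega^i_{A/W_n}$, and (iii) that it sends $\Omega^i_{A'/W_n}(\log t')$ into $\Omega^i_{A/W_n}(\log t)$. Item (ii) is \cite[Lem.~4.3]{K23} and item (iii) should be reducible to (i)+(ii) using the uniformizer relation $f^*t = u(t')^e$ (so $\dlog t' = \frac1e(\dlog t - \dlog u)$ after tensoring with $\Q$, though one wants an integral statement — better to use the decomposition of $\Omega^i(\log)$ and push each piece). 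Once these are in hand, both parts follow by the bookkeeping of $t$- versus $t'$-powers through $f^*t = u(t')^e$, together with the elementary ceiling inequality above. I would state (i)–(iii) explicitly at the start of the proof (citing \cite{K23} and the Appendix), and then the two parts are short.
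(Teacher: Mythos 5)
Your part (1) is correct and is essentially the paper's own argument: reduce via Lemma \ref{lem:4.2} to showing $f^i_*(\omega)\wedge\dlog t\in\Omega^{i+1}_{A/W_n}$, use the projection formula to rewrite this as $f^{i+1}_*(\omega\wedge\dlog(u(t')^e))=f^{i+1}_*(\omega\wedge\dlog u)+e\,f^{i+1}_*(\omega\wedge\dlog t')$, and check that both arguments lie in $\Omega^{i+1}_{A'/W_n}$.

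Part (2), however, has a genuine gap. Your bookkeeping rests on the inequality $\lceil(\lceil r\rceil-1)/e\rceil\le\lceil r/e\rceil-1$, and this is false: for $r=2$, $e=2$ the left-hand side is $\lceil 1/2\rceil=1$ while the right-hand side is $0$; it fails likewise for $r=3/2$, $e=2$. (Your attempted verification is also inverted: $r/e-1/e\le r/e-1$ holds only when $e\le 1$, not when $e\ge 1$.) The loss of one power of $t$ comes from the step where, after writing $(t')^{-m}=u^{N}t^{-N}(t')^{eN-m}$ with $N=\lceil m/e\rceil$, you push forward a form that you only know lies in $\Omega^i_{A'/W_n}(\log t')$ and invoke your ingredient (iii), which returns merely $\Omega^i_{A/W_n}(\log t)$. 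The paper instead takes $N=\lceil r/e\rceil$: since $e\lceil r/e\rceil$ is an integer $\ge r$, one has $e\lceil r/e\rceil\ge\lceil r\rceil$, so $u^{N}(t')^{eN}\omega\in t'\Omega^i_{A'/W_n}(\log t')$ and part (1) applies, giving $t^{N}f^i_*(\omega)=f^i_*(u^{N}(t')^{eN}\omega)\in t\,\Omega^i_{A/W_n}(\log t)$ and hence $f^i_*(\omega)\in t^{-\lceil r/e\rceil+1}\Omega^i_{A/W_n}(\log t)$, which is the assertion; the extra factor of $t$ recovered from part (1) is exactly what your count is missing. (Your route could be repaired by a case split on whether $e$ divides $m$ --- when it does not, $eN-m\ge 1$ and part (1) supplies the missing power; when it does, $m/e$ is an integer strictly less than $r/e$ and so $m/e\le\lceil r/e\rceil-1$ --- but as written the argument does not close, and the uniform use of (iii) both weakens the estimate and imports an ingredient the paper never needs to prove.)
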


\begin{proof}
(1) \, For $t'\omega \in t'\Omega^i_{A'/W_n}(\log t')$ \, (where $\omega \in 
\Omega^i_{A'/W_n}(\log t')$), 
\begin{align*}
f_*^i(t'\omega) \wedge \dlog t 
& = 
f_*^{i+1}( t'\omega \wedge \dlog (u (t')^e)) \\ 
& = 
f_*^{i+1}(t'\omega \wedge \dlog u) + e 
f_*^{i+1}(t'\omega \wedge \dlog t'). 
\end{align*}
(Here we used the projection formula for the pushforward maps 
$f^j_* \, (j \in \N)$. This follows from Remark \ref{rem:push}.) 
If we write $\omega = \omega_1 + \omega_2 \wedge \dlog t'$ 
with $\omega_1 \in A' \otimes_{B'} \Omega^i_{B'}, \omega_2 \in 
A' \otimes_{B'} \Omega^{i-1}_{B'}$, 
\begin{align*}
& t'\omega \wedge \dlog u = t'\omega_1 \wedge \dlog u 
+ \omega_2 \wedge dt' \wedge \dlog u \in \Omega^{i+1}_{A'}, \\
& t'\omega \wedge \dlog t' = \omega_1 \wedge dt' \in \Omega^{i+1}_{A'}. 
\end{align*}
So $f_*^i(t'\omega) \wedge \dlog t \in f_*^{i+1}(\Omega^{i+1}_{A'/W_n}) 
\subseteq \Omega^{i+1}_{A/W_n}$. Hence, by Lemma \ref{lem:4.2}, 
$f_*^i(t'\omega)$ belongs to $t\Omega^i_{A/W_n}(\log t)$, as required. 

(2) \, For $\omega \in (t')^{-\lceil r \rceil + 1}
\Omega^i_{A'/W_n}(\log t')$, $(t')^{\lceil r \rceil}\omega 
\in t'\Omega^i_{A'/W_n}(\log t')$. Since 
$e\lceil r/e \rceil \geq \lceil r \rceil$, 
$(t')^{e \lceil r/e \rceil}\omega 
\in t'\Omega^i_{A'/W_n}(\log t')$. Then, 
\begin{align*}
t^{\lceil r/e \rceil} f^i_*(\omega) 
& = f^i_*(u^{\lceil r/e \rceil}(t')^{e \lceil r/e \rceil}\omega) \\ 
& \in f^i_*(t'\Omega^i_{A'/W_n}(\log t')) \subseteq 
t\Omega^i_{A/W_n}(\log t)
\end{align*}
by (1). So $f^i_*(\omega) \in 
t^{- \lceil r/e \rceil+1}\Omega^i_{A/W_n}(\log t)$. So we are done. 
\end{proof}

\begin{prop}\label{prop:ramfil}
$(1)$ \, The collection of the filtrations 
$\{\Fil'_r W_n\Omega^i_L \}_{r \in \Q_{\geq 0}}$ on 
$ W_n\Omega^i_L$ for $L \in \Phi$ forms a ramification filtration
on $W_n\Omega^i$. \\
$(2)$ \, The collection of the filtrations 
$\{\Fil_r W_n\Omega^i_L \}_{r \in \Q_{\geq 0}}$ on 
$ W_n\Omega^i_L$ for $L \in \Phi$ forms a ramification filtration
on $W_n\Omega^i$. 
\end{prop}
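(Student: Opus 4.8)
The plan is, for each of the two filtrations, to verify the two defining conditions of a ramification filtration on $W_n\Omega^i$ — compatibility with the valuation ring, and contraction of the filtration index under the trace maps — and then to deduce $(2)$ from $(1)$ by elementary bookkeeping with ceilings. For $(1)$, I would first observe that $\{\Fil'_rW_n\Omega^i_L\}_r$ is \emph{increasing}: for $0<r\le s$ one has $\lceil r\rceil\le\lceil s\rceil$, so iterating the injectivity of the transition map \eqref{eq:a-crys-inj} from Proposition \ref{prop:inj}$(2)$ and passing to global sections and to the colimit over $\cP_L$ gives $\Fil'_rW_n\Omega^i_L\subseteq\Fil'_sW_n\Omega^i_L$, compatibly with the embeddings into $W_n\Omega^i_L$ furnished by the proposition above showing $\varinjlim_{(X,D,g)\in\cP_L}H^i_\crys(\cI^{\otimes(-b)})\subseteq W_n\Omega^i_L$; moreover the natural map $W_n\Omega^i_X\to W_n\Omega^i_{(X,D)}$, composed with the injection of Proposition \ref{prop:inj}$(4)$, identifies (in the colimit) $\Fil'_0W_n\Omega^i_L=W_n\Omega^i_{O_L}$ with a subgroup of $\Fil'_sW_n\Omega^i_L$ for $0<s\le 1$. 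The first condition, $\Im(W_n\Omega^i_{O_L}\to W_n\Omega^i_L)\subseteq\Fil'_0W_n\Omega^i_L$, holds by the very definition of $\Fil'_0$, so the entire content lies in the trace condition.

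For the trace condition, fix a finite extension $L\subseteq L'$ of ramification index $e$ and uniformizers $t_0,t_0'$. By Lemma \ref{lem:cofinal} I may compute the colimits defining $\Fil'_rW_n\Omega^i_{L'}$, $\Fil'_{r/e}W_n\Omega^i_L$ and the trace map ${\rm Tr}_{L'/L}$ over (a cofinal subsystem of) $\cP''_{L,L'}$, so it suffices to work with one tuple $(X,D,g,X',D',g',h_0)\in\cP''_{L,L'}$ at a time. By Remark \ref{rem:lift}$(4)$ I would lift it to smooth affine $\cX=\Spec A$, $\cX'=\Spec A'$ over $W_n$ carrying etale maps $B[t]\to A$, $B'[t']\to A'$ and a finite morphism $f\colon\cX'\to\cX$ with $f^*t=u(t')^e$ for some $u\in(A')^\times$ — exactly the setup of Lemma \ref{lem:4.3}. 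With $U:=X\setminus D$, $U':=X'\setminus D'$ and, for $r>0$, $b:=\lceil r\rceil-1$, $b':=\lceil r/e\rceil-1$, Proposition \ref{prop:inj}$(1)$ and the de Rham description preceding it identify $H^i_\crys(\cI^{\otimes(-b)}_{(X',D')})$ with $H^i\big((t')^{-b}\Omega^\bullet_{A'/W_n}(\log t')\big)$ and $H^i_\crys(\cI^{\otimes(-b')}_{(X,D)})$ with $H^i\big(t^{-b'}\Omega^\bullet_{A/W_n}(\log t)\big)$, and Proposition \ref{prop:inj}$(4)$ together with the Illusie--Raynaud isomorphism \eqref{eq:ir-intro} realizes these as subgroups of $\Gamma(U',W_n\Omega^i_{U'})\cong H^i(\Omega^\bullet_{A'[1/t']/W_n})$, resp. $\Gamma(U,W_n\Omega^i_U)\cong H^i(\Omega^\bullet_{A[1/t]/W_n})$.

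The key input is then the identification, under these isomorphisms, of the Hodge--Witt pushforward $(h_0|_{U'})_*$ with the de Rham pushforward $f_*^\bullet$ on the meromorphic de Rham complexes; this is precisely the compatibility proved in the Appendix, which also yields that $f_*^\bullet$ is a morphism of complexes (via the projection formula of Remark \ref{rem:push}). Granting this, Lemma \ref{lem:4.3}$(2)$ shows that $f_*^\bullet$ carries $(t')^{-b}\Omega^\bullet_{A'/W_n}(\log t')$ into $t^{-b'}\Omega^\bullet_{A/W_n}(\log t)$, hence on cohomology carries $H^i_\crys(\cI^{\otimes(-b)}_{(X',D')})$ into $H^i_\crys(\cI^{\otimes(-b')}_{(X,D)})$, compatibly with the inclusions into $\Gamma(U',W_n\Omega^i_{U'})$ and $\Gamma(U,W_n\Omega^i_U)$; passing to the colimit over $\cP''_{L,L'}$ gives ${\rm Tr}_{L'/L}(\Fil'_rW_n\Omega^i_{L'})\subseteq\Fil'_{r/e}W_n\Omega^i_L$ for $r>0$. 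The case $r=0$, i.e. ${\rm Tr}_{L'/L}(W_n\Omega^i_{O_{L'}})\subseteq W_n\Omega^i_{O_L}$, follows in the same way (or already from $f_*^i(\Omega^i_{A'/W_n})\subseteq\Omega^i_{A/W_n}$, since $f^*t=u(t')^e$). This proves $(1)$.

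Finally, for $(2)$ one has $\Fil_rW_n\Omega^i_L=\Fil'_{p\lceil r\rceil}W_n\Omega^i_L$; this is increasing, $\Fil_0=\Fil'_0$ satisfies the first condition by $(1)$, and for the trace condition $(1)$ gives ${\rm Tr}_{L'/L}(\Fil_rW_n\Omega^i_{L'})\subseteq\Fil'_{p\lceil r\rceil/e}W_n\Omega^i_L$, which lies in $\Fil'_{p\lceil r/e\rceil}W_n\Omega^i_L=\Fil_{r/e}W_n\Omega^i_L$ because $e\lceil r/e\rceil$ is an integer $\ge r$, hence $\ge\lceil r\rceil$, so $p\lceil r\rceil/e\le p\lceil r/e\rceil$ and $\Fil'$ is increasing. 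The step I expect to be the main obstacle is the identification of the Hodge--Witt trace with the de Rham pushforward (the Appendix): one must ensure it is independent of the chosen lift, that it defines a chain map on the meromorphic de Rham complexes, and that it is compatible with \eqref{eq:ir-intro} functorially in the tuple, since only then does the purely differential-forms estimate of Lemma \ref{lem:4.3}$(2)$ translate into the desired control on the ramification filtration.
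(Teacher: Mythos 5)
Your proposal is correct and follows essentially the same route as the paper: reduce to a single tuple in $\cP''_{L,L'}$, lift the finite morphism to $W_n$, identify the Hodge--Witt trace with the de Rham pushforward via the Appendix compatibility (Proposition \ref{prop:pushforward}), and invoke Lemma \ref{lem:4.3}(2) to control the pole order. The only (harmless) divergence is in part (2), where you use just the inclusion $\Fil'_{p\lceil r\rceil/e}\subseteq\Fil'_{p\lceil r/e\rceil}$ coming from monotonicity and $e\lceil r/e\rceil\geq\lceil r\rceil$, which is slightly more economical than the paper's proof of the corresponding equality via Proposition \ref{prop:inj}(3) and the identity $\lceil\lceil r\rceil/e\rceil=\lceil r/e\rceil$.
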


\begin{proof}
(1) \, By definition, $\Fil'_0 W_n\Omega^i_L = W_n\Omega^i_{O_L}$. 
So it suffices to prove the inclusion 
${\rm Tr}_{L'/L}(\Fil'_r W_n\Omega^i_{L'}) \subseteq 
\Fil'_{r/e} W_n\Omega^i_{L}$ for a finite extension $L \subseteq L'$ 
of geometric henselian valuation fields. 

Let $\cP''_{L.L'}$ be the projective system defined before Lemma 
\ref{lem:cofinal}. Then we have 
$$ W_n\Omega^i_{O_L} = \varinjlim_{\cP''_{L,L'}} 
W_n\Omega^i_{A_0}, \quad 
W_n\Omega^i_{O_{L'}} = \varinjlim_{\cP''_{L,L'}} 
W_n\Omega^i_{A'_0}, $$
where for $(X,D,g,X',D',g',h_0) \in \cP_{L,L'}$, we put 
$X = \Spec A_0$, $X' = \Spec A'_0$. 
Since the pushforward maps 
$h_{0,*}^i: W_n\Omega^i_{A'_0} \lra W_n\Omega^i_{A_0}$ 
of Hodge-Witt sheaves (defined from $h_0: X' \lra X$) 
induce the trace map 
$W_n\Omega^i_{O_{L'}} \lra W_n\Omega^i_{O_{L}}$ which is compatible with 
${\rm Tr}_{L'/L}: W_n\Omega^i_{L'} \lra W_n\Omega^i_{L}$, we see 
the inclusion ${\rm Tr}_{L'/L}(\Fil'_0 W_n\Omega^i_{L'}) \subseteq 
\Fil'_{0} W_n\Omega^i_{L}$. So the claim holds for $r=0$. 

In the following, we assume that $r > 0$. 
Let $\omega$ be an element of $\Fil'_r W_n\Omega^i_{L'}$. 
Then we have 
$$ W_n\Omega^i_{L} = \varinjlim_{\cP''_{L,L'}} 
W_n\Omega^i_{A_0[t_0^{-1}]}, \quad 
\Fil'_rW_n\Omega^i_{L} = 
\varinjlim_{\cP''_{L,L'}}
H^i_{\crys}(\cI_{(X,D)}^{\otimes(-\lceil r \rceil +1)}) 
$$ 
and the similar isomorphisms hold also for $L'$. So 
$\omega$ comes from an element of 
$H^i_{\crys}(\cI_{(X',D')}^{\otimes(-\lceil r \rceil +1)})$ 
for some $(X,D,g,X',D',g',h_0) \in \cP''_{L,L'}$. 
Fix this tuple, and then take a lift 
$h: (\cX',\cD') \lra (\cX,\cD)$ of $h_0$ as in Remark \ref{rem:lift}(5). 
Also, put $U := X \setminus D, U' := X' \setminus D', 
\cU := \cX \setminus \cD, \cU' := \cX' \setminus \cD'$. 
Then 
\begin{align*}
& H^i_{\crys}(\cI_{(X,D)}^{\otimes(-\lceil r/e \rceil +1)}) 
\cong H^i(t^{-\lceil r/e \rceil +1}\Omega^{\bullet}_{A/W_n}(\log t)), \\
& H^i_{\crys}(\cI_{(X',D')}^{\otimes(-\lceil r \rceil +1)}) 
\cong H^i((t')^{-\lceil r \rceil +1}\Omega^{\bullet}_{A'/W_n}(\log t')), \\
& \Gamma(U, W_n\Omega^i_U) \cong 
H^i_{\crys}(U/W_n) \cong H^i(\Omega^{\bullet}_{A[t^{-1}]/W_n}), \\ 
& \Gamma(U', W_n\Omega^i_{U'}) \cong 
H^i_{\crys}(U'/W_n) \cong H^i(\Omega^{\bullet}_{A'[(t')^{-1}]/W_n}) 
\end{align*}
and so we obtain the following commutative diagram: 
\[ 
\xymatrix{
W_n\Omega^i_{L'} \ar[d]^{{\rm Tr}_{L'/L}} & 
\Gamma(U', W_n\Omega^i_{U'}) 
\ar[l] \ar[d]^{h_{0,*}^i} \ar@{-}[r]^{\cong} 
& 
 H^i(\Omega^{\bullet}_{A'[(t')^{-1}]/W_n}) 
\ar[d]^{H^i(h^{\bullet}_*)} & 
H^i((t')^{-\lceil r \rceil +1}\Omega^{\bullet}_{A'/W_n}(\dlog t')) 
\ar[l] \\ 
W_n\Omega^i_{L} & 
\Gamma(U, W_n\Omega^i_{U}) 
\ar[l] \ar@{-}[r]^{\cong} 
& 
 H^i(\Omega^{\bullet}_{A[t^{-1}]/W_n}) & 
H^i(t^{-\lceil r/e \rceil +1}\Omega^{\bullet}_{A/W_n}(\dlog t)). 
\ar[l] 
}
\]
Here $h_{0,*}^i$ is the pushforward map of Hodge-Witt sheaves 
and $H^i(h^{\bullet}_*)$ is the map induced by the pushforward map of 
Hodge sheaves. The commutativity of the left square follows from the 
definition of the map ${\rm Tr}_{L'/L}$. The commutativity of the 
square in the middle is nontrivial, and it follows from Proposition \ref{prop:pushforward}. 

By Lemma \ref{lem:4.3}, there exists a map 
$$ 
H^i((t')^{-\lceil r \rceil +1}\Omega^{\bullet}_{A'/W_n}(\log t'))  
\lra H^i(t^{-\lceil r/e \rceil +1}\Omega^{\bullet}_{A/W_n}(\log t)) 
$$
which fits into the above comutative diagram. Then we see that 
${\rm Tr}_{L'/L}(\omega)$ belongs to the image of 
$H^i(t^{-\lceil r/e \rceil +1}\Omega^{\bullet}_{A/W_n}(\log t))$ and so 
belongs to $\Fil'_{r/e} W_n\Omega^i_{L}$. So we are done. 

(2) \, It suffices to prove the inclusion 
${\rm Tr}_{L'/L}(\Fil_r W_n\Omega^i_{L'}) \subseteq 
\Fil_{r/e} W_n\Omega^i_{L}$ for a finite extension $L \subseteq L'$ 
of geometric henselian valuation fields, when $r>0$. 
Noting that $\Fil_r W_n\Omega^i_{L'} = 
\Fil'_{p \lceil r \rceil} W_n\Omega^i_{L'}$, we see that 
${\rm Tr}_{L'/L}(\Fil_r W_n\Omega^i_{L'}) 
\subseteq \Fil'_{p \lceil r \rceil / e} W_n\Omega^i_{L'}$. 
On the other hand, 
$\Fil_{r/e} W_n\Omega^i_{L} = \Fil'_{p \lceil r/e \rceil} W_n\Omega^i_{L}$. 
So it suffices to prove that 
$\Fil'_{p \lceil r \rceil / e} W_n\Omega^i_{L'} = 
\Fil'_{p \lceil r/e \rceil} W_n\Omega^i_{L}$. 
By Proposition \ref{prop:inj}(3) and the definition of the filtration 
$\Fil'$, $\Fil'_{p \lceil r \rceil / e} W_n\Omega^i_{L'} = 
\Fil'_{p \lceil \lceil r \rceil / e \rceil} W_n\Omega^i_{L'}$. 
So it suffices to prove the equality 
$\left\lceil \dfrac{\lceil r \rceil}{e} \right\rceil = 
\left\lceil \dfrac{r}{e} \right\rceil$. 
The inequality $\left\lceil \dfrac{\lceil r \rceil}{e} \right\rceil \geq
\left\lceil \dfrac{r}{e} \right\rceil$
 is obvious. Conversely, 
\begin{align*}
\left\lceil \dfrac{r}{e} \right\rceil 
\geq \dfrac{r}{e} > \dfrac{\lceil r \rceil - 1}{e} 
= \dfrac{\lceil r \rceil}{e} - \frac{1}{e} 
\geq \left\lceil \dfrac{\lceil r \rceil}{e} \right\rceil 
- \frac{e-1}{e} - \frac{1}{e} = \left\lceil \dfrac{\lceil r \rceil}{e} \right\rceil -1. 
\end{align*}
So we have the equality $\left\lceil \dfrac{\lceil r \rceil}{e} \right\rceil = 
\left\lceil \dfrac{r}{e} \right\rceil$, and the assertion is proved. 
\end{proof}

\begin{prop}\label{prop:ramfilcomp}
$(1)$ \, When $n=1$, our ramfication fitration 
$\{\Fil_r W_n\Omega^i_L \}_{r \in \Q_{\geq 0}}$ coincides with 
the ramification 
filtration $\{\Fil_r \Omega^i(L) \}_{r \in \Q_{\geq 0}}$ of Koizumi {\rm \cite[
Def.~4.1]{K23}}. \\
$(2)$ \, When $i=0$, our ramfication fitration 
$\{\Fil_r W_n\Omega^i_L \}_{r \in \Q_{\geq 0}}$ coincides with 
the ramification 
filtration $\{\Fil_r W_n\cO(L) \}_{r \in \Q_{\geq 0}}$ of Koizumi 
{\rm \cite[Def.~4.8]{K23}}. 
\end{prop}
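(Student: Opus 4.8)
The plan is to verify the coincidence of filtrations term-by-term, reducing in both cases to an explicit computation over the henselian local ring $O_L$, using the concrete description of $H^i_{\crys}(\cI_{(X,D)}^{\otimes(-b)})$ as de Rham cohomology of $t^{-b}\Omega^{\bullet}_{\cX/W_n}(\log t)$ established before Proposition \ref{prop:inj}. Fix $L \in \Phi$, a uniformizer $t_0$ of $O_L$, and work with the cofinal projective system $\cP'_L$ so that every $(X,D,g)$ admits a lift $(\cX,\cD)$ with coordinate $t$ lifting $t_0$ and an etale map $B_0[t_0] \to A_0$; then take the colimit over $\cP'_L$. Since $\Fil_r = \Fil'_{p\lceil r\rceil}$ on both sides (for our filtration by definition, and for Koizumi's by the shape of the formulas in Proposition \ref{prop:koizumi}), it suffices to compare the $\Fil'$-filtrations, i.e. to identify $\varinjlim_{\cP'_L} H^i(t^{-b}\Omega^{\bullet}_{\cX/W_n}(\log t))$ inside $W_n\Omega^i_L$ with the appropriate term of Koizumi's filtration for each integer $b \geq 1$.

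For part (2), $i=0$: here $H^0(t^{-b}\Omega^{\bullet}_{\cX/W_n}(\log t))$ is the kernel of $\nabla$ on $t^{-b}\cO_{\cX}$, i.e. the horizontal sections, which via the Illusie--Raynaud isomorphism $H^0_{\crys}(U/W_n)\cong W_n\cO_U$ (identifying crystalline $H^0$ with Witt vectors) must be matched with $\{a \in W_n(O_L) : \ul{t}^{b-1}F^{n-1}(a) \in W_n(O_L)\}$ appearing in Proposition \ref{prop:koizumi}(2). The key input is the relation between the de Rham description $\cH^0(\Omega^{\bullet}_{\cX/W_n})\cong W_n\cO_X$ of \eqref{eq:ir-intro} and the Witt-vector structure: a horizontal element of $t^{-b}\cO_{\cX}$ corresponds to a Witt vector whose "last Frobenius" component has a pole bounded by $b-1$; after the renumbering $b = p\lceil r\rceil$ (so $b-1 = p\lceil r\rceil - 1$) one checks this matches $\lceil r\rceil - 1$ up to the $F^{n-1}$ twist, because Frobenius on $W_n$ multiplies valuations by $p$. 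This is essentially the content of the $n=1$ case of the Hodge computation combined with the iterated-Frobenius bookkeeping, and I expect it to follow from unwinding \cite{IR83} III (1.5.2) together with the explicit formula for $F$ on Witt vectors.

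For part (1), $n=1$: here $W_1\Omega^i = \Omega^i$, the crystalline site is just the Zariski/de Rham site over $k$ (no $W_n$ needed), and $H^i(t^{-b}\Omega^{\bullet}_{\cX/k}(\log t))$ must be identified with $t^{-b+1}\Omega^i(O_L)(\log t)$ for $b>0$ (and $\Omega^i(O_L)$ for $b=0$, equivalently $b=1$ gives the same by Proposition \ref{prop:inj}(3) since $1$ is prime to $p$). Concretely, one shows that the map $H^i(t^{-b}\Omega^{\bullet}_{\cX/k}(\log))\to H^i(\Omega^{\bullet}_{\cX[t^{-1}]/k}) = \Omega^i_{A[t^{-1}]/k}$ (the latter equality because differentials are closed in the relative-dimension-one-over-a-point... no --- rather because over the generic fibre everything is exact in the relevant range, matching $\varinjlim = W_1\Omega^i_L = \Omega^i_L$) has image exactly the $i$-forms $\omega$ with $t^{b-1}\omega \in \Omega^i_{A/k}(\log t)$; this is a direct cocycle computation in the de Rham complex, using that $d$ of the top "$\dlog t$ component" picks up the residue, exactly as in the calculations \eqref{eq:calcdiff} and Lemma \ref{lem:4.2}. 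After applying the renumbering $b = p\lceil r\rceil$ and using $\lceil p\lceil r\rceil\rceil = p\lceil r\rceil$ together with Proposition \ref{prop:inj}(3) (to replace $p\lceil r\rceil$ by $\lceil r\rceil$, since... wait, $p\lceil r\rceil$ is divisible by $p$, so one must instead note $\Fil'_{p\lceil r\rceil} = H^i_{\crys}(\cI^{\otimes(-p\lceil r\rceil+1)})$ and compare $t^{-p\lceil r\rceil+1}\Omega^i(\log t)$ with Koizumi's $t^{-\lceil r\rceil+1}\Omega^i(\log t)$) --- hmm, these do not obviously agree, so the renumbering must be doing something subtler; the resolution is that the correct comparison identifies our $\Fil'_b$ at $b = \lceil r\rceil$, not $p\lceil r\rceil$, so part (1) should just use $\Fil_r W_n\Omega^i_L = \Fil'_{p\lceil r\rceil}$ and separately verify $\Fil'_{p\lceil r\rceil}\Omega^i_L = t^{-\lceil r\rceil +1}\Omega^i(O_L)(\log t) = \Fil_r\Omega^i(L)$, i.e. that passing $b\mapsto p\lceil r\rceil$ versus $b\mapsto \lceil r\rceil$ yields the same subspace of $\Omega^i_L$. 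That last equality is exactly where Proposition \ref{prop:inj}(3) is needed: the inclusions $\Fil'_{\lceil r\rceil}\subseteq \Fil'_{\lceil r\rceil + 1}\subseteq \cdots \subseteq \Fil'_{p\lceil r\rceil}$ are equalities at every step where the index is not divisible by $p$, and the index $p\lceil r\rceil$ itself equals $p\cdot\lceil\lceil r\rceil/1\rceil$... one argues $\Fil'_m = \Fil'_{\lceil m/p\rceil \cdot p}$ type stabilization, reducing $p\lceil r\rceil$ down to $\lceil r\rceil$ through the prime-to-$p$ steps.

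The main obstacle, and the step I would spend the most care on, is the compatibility in part (2) between the Witt-vector description and the de Rham/crystalline pole-order filtration: identifying $H^0(t^{-b}\Omega^{\bullet}_{\cX/W_n})$ inside $W_n\cO_L$ requires tracking precisely how the Illusie--Raynaud isomorphism \cite{IR83} interacts with the multiplicative structure of Witt vectors and the operator $F$, so that the condition "$\nabla$-horizontal with pole $\leq b$" translates into "$\ul{t}^{b-1}F^{n-1}(a)$ integral". Part (1) is a more mechanical de Rham cocycle computation (morally Lemma \ref{lem:4.2} at the level of cohomology), and the renumbering juggling is routine ceiling-function arithmetic of the same flavour as the end of the proof of Proposition \ref{prop:ramfil}(2). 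I would also need to double-check that both comparisons are compatible with the trace/pushforward maps so that they genuinely are equalities of ramification filtrations and not just of the underlying filtered groups, but that compatibility is already packaged in Proposition \ref{prop:pushforward} and Proposition \ref{prop:ramfil}.
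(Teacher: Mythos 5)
Your overall strategy (pass to the cofinal system $\cP'_L$, use the explicit description of $H^i_{\crys}(\cI_{(X,D)}^{\otimes(-b)})$ as $H^i(t^{-b}\Omega^{\bullet}_{\cX/W_n}(\log t))$, and compare with Koizumi's formulas) is the same as the paper's, but part (1) of your argument contains a genuine error at exactly the point where you hesitate. The identification $W_1\Omega^i_L = \Omega^i_L \cong \varinjlim H^i(\Omega^{\bullet}_{A_0[t_0^{-1}]/k})$ is \emph{not} induced by any inclusion of forms into cocycles; it is the inverse Cartier isomorphism $C^{-1}$ (the $n=1$ case of the Illusie--Raynaud isomorphism). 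Since $C^{-1}$ is Frobenius-semilinear, $C^{-1}(t_0^{-N}\omega)=t_0^{-pN}C^{-1}(\omega)$, so Koizumi's piece $t_0^{-\lceil r\rceil+1}\Omega^i(O_L)(\log t_0)$ is carried isomorphically onto $H^i\bigl(t_0^{-p(\lceil r\rceil-1)}\Omega^{\bullet}_{A_0/k}(\log t_0)\bigr)$, which by Proposition \ref{prop:inj}(3) equals $H^i\bigl(t_0^{-p\lceil r\rceil+1}\Omega^{\bullet}_{A_0/k}(\log t_0)\bigr)=\Fil'_{p\lceil r\rceil}$. This semilinearity is precisely what the renumbering $\Fil_r=\Fil'_{p\lceil r\rceil}$ is designed to absorb. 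Your proposed alternative --- collapsing $\Fil'_{p\lceil r\rceil}$ down to $\Fil'_{\lceil r\rceil}$ by a ``prime-to-$p$ stabilization'' --- is false: Proposition \ref{prop:inj}(3) only gives $\Fil'_m=\Fil'_{m+1}$ when $m$ is prime to $p$, and the interval $[\lceil r\rceil,\,p\lceil r\rceil]$ contains multiples of $p$ at which the filtration genuinely jumps (e.g.\ $p=3$, $\lceil r\rceil=2$: the jump at $m=3$ separates $\Fil'_2$ from $\Fil'_6$). So $\Fil'_{\lceil r\rceil}\ne\Fil'_{p\lceil r\rceil}$ in general and your part (1) does not close.

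For part (2) you correctly identify the mechanism (the extra Frobenius hidden in the Illusie--Raynaud isomorphism supplies the factor of $p$), but you leave the actual verification as a to-do. The paper's proof here is not a formality: it passes to a further cofinal system $\cQ_L$ admitting Frobenius lifts with $F(\wt t)=\wt t^p$, writes the isomorphism explicitly as $\beta^{-1}\circ\alpha$ with $\alpha(a)=[F^n(a)]$ and $\beta$ given by the system $\wt a_0^{p^j}+p\wt a_1^{p^{j-1}}+\cdots+p^j\wt a_j=F^j\wt b$, shows $\ul{t_0}^{N}F^{n-1}(a)\in W_n(A_0)\Leftrightarrow\ul{t_0}^{pN}F^{n}(a)\in W_n(A_0)$ using normality of $A_0$, and then runs a minimal-index contradiction argument to match $t^{pN}b\in A$ with integrality of the components $a_j$. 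None of this bookkeeping is in your sketch, and it is where the content of the statement lives; as written your part (2) is a plan rather than a proof.
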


\begin{proof}
(1) \, 
Let $\cP'_L$ be the projective system defined 
in the beginning of this section and 
take $(X,D,g) \in \cP'_L$ with $A_0, t_0$ as in (A), (B). 
It suffices to see that the Cartier inverse isomorphism 
$$C^{-1}: \Omega^i_{A_0[t_0^{-1}]/k} \os{\cong}{\lra} 
H^i(\Omega^{\bullet}_{A_0[t_0^{-1}]/k})$$ induces the isomorphism 
$t_0^{-\lceil r \rceil + 1}\Omega^i_{A_0/k}(\log t_0) 
\os{\cong}{\lra} 
H^i(t_0^{-p\lceil r \rceil + 1}\Omega^{\bullet}_{A_0/k}(\log t_0))$. 

By the Cartier inverse isomorphism for the log scheme $(X,D)$, 
$C^{-1}$ induces the isomorphism 
$$\Omega^i_{A_0/k}(\log t_0) 
\os{\cong}{\lra} 
H^i(\Omega^{\bullet}_{A_0/k}(\log t_0)).$$ 
Since $C^{-1}$ is Frobenius semilinear, it induces the isomorphism 
$$t_0^{-\lceil r \rceil + 1}\Omega^i_{A_0/k}(\log t_0) 
\os{\cong}{\lra} 
H^i(t_0^{-p(\lceil r \rceil - 1)}\Omega^{\bullet}_{A_0/k}(\log t_0)).$$ 
Since 
$H^i(t_0^{-p\lceil r \rceil + 1}\Omega^{\bullet}_{A_0/k}(\log t_0))$
 is equal to 
$H^i(t_0^{-p(\lceil r \rceil - 1)}\Omega^{\bullet}_{A_0/k}(\log t_0))$
by Proposition \ref{prop:inj}(3), we obtain the required isomorphism. 

(2) \, 
Take the sub projective system $\cQ_L$ of $\cP'_L$ consisting of 
all the triples $(X,D,g)$ as in (A), (B) which satisfies the following 
condition: 

\begin{itemize}
\item 
There exists a lift $(\wt{\cX}, \wt{\cD})$ of $(X,D)$ such that 
$\wt{\cX} = \Spf \wt{A}$ is an affine $p$-adic formal scheme smooth over 
$\Spf W$, $\wt{\cD}$ is a smooth divisor of $\wt{\cX}$ defined by 
a lift $\wt{t} \in \wt{A}$ of 
$t_0 \in A_0$, and that 
there exists a lift $F: \wt{A} \lra \wt{A}$ of Frobenius $F: A_0 \lra A_0$ 
on $A_0$ with $F(\wt{t}) = \wt{t}^p$. 
\end{itemize}

It is easy to see that $\cQ_L$ is cofinal in $\cP'_L$. 
Let $(\cX, \cD), t, F: \cX \lra \cX$ be the mod $p^n$ reduction of 
$(\wt{\cX}, \wt{\cD}), \wt{t}, F: \wt{\cX} \lra \wt{\cX}$ respectively. 
To prove the assertion, it suffices to prove that, for 
$(X,D,g) \in \cQ_L$, the isomorphism 
\begin{equation}\label{eq:ira0}
W_n(A_0[(t_0)^{-1}]) \cong H^0(\Omega^{\bullet}_{A[t^{-1}]/W_n}) 
\end{equation}
of Illusie-Raynaud induces the isomorphism 
\begin{equation}\label{eq:simesui=0}
\{a \in W_n(A_0[(t_0)^{-1}]) \,|\, 
\ul{t_0}^{\lceil r \rceil - 1} F^{n-1}(a) \in W_n(A_0)\} 
\cong 
H^0(t^{-p(\lceil r \rceil - 1)}\Omega^{\bullet}_{A/W_n}(\log t)). 
\end{equation}
Recall that, under the assumption $(X,D,g) \in \cQ_L$, 
the isomorphism is written as 
\begin{equation}\label{eq:ira02}
W_n(A_0[(t_0)^{-1}]) 
\os{\alpha, \cong}{\lra} 
H^0(W_n\Omega^{\bullet}_{A_0[(t_0)^{-1}]}) 
\os{\beta, \cong}{\lla}
H^0(\Omega^{\bullet}_{A[t^{-1}]/W_n}). 
\end{equation}
Here the map $\alpha$ is defined by 
$\alpha(a) := [F^n(a)]$, and 
the map $\beta$ is defined by $\beta([b]) := [(a_0, \dots, a_{n-1})]$, 
where, if $\wt{b}$ is a lift of $b$ to $\widetilde{A}[\wt{t}^{-1}]^{\wedge}$ 
and if we define $\wt{a}_0, \dots, \wt{a}_{n-1} \in \widetilde{A}[\wt{t}^{-1}]^{\wedge}$ by 
\begin{equation}\label{eq:mapbeta}
\wt{a}_0^{p^j} + p \wt{a}_1^{p^{j-1}} + \cdots + p^j \wt{a}_j = F^j \wt{b} 
\quad (0 \leq j \leq n-1), 
\end{equation}
$a_j \, (0 \leq j \leq n-1)$ is $\wt{a}_j$ mod $p$. 

Put $N := \lceil r \rceil - 1$. 
First, let $a \in W_n(A_0[(t_0)^{-1}])$ and consider the condition 
that $\ul{t_0}^NF^{n-1}(a) \in W_n(A_0)$. 
This is equivalent to the condition 
$t_0^{p^jN}a_j^{p^{n-1}} \in A_0 \,(\forall 0 \leq j \leq n-1)$, 
and then is equivalent to the condition 
$t_0^{p^{j+1}N}a_j^{p^n} \in A_0 \,(\forall 0 \leq j \leq n-1)$, 
namely, $\ul{t_0}^{pN}F^n(a) \in W_n(A_0)$, because $A_0$ is normal. 
%
Let $b \in A[t^{-1}]$ be an element with $db=0$, and let 
$\beta([b]) = [a]$ with $a := (a_0,\dots, a_{n-1})$. 
Then, to prove the assertion, it suffices to prove that 
$t^{pN}b \in A$ if and only if 
$a := (a_0,\dots, a_{n-1})$ satisfies the 
condition $\ul{t_0}^{pN} a \in W_n(A_0)$, namely, 
\begin{equation}\label{eq:condition-a} 
\forall 0 \leq j \leq n-1, t_0^{p^{j+1}N}a_j \in A_0. 
\end{equation}

First suppose that $t^{pN}b \in A$. Then there exists a lift 
$\wt{b} \in \wt{A}[\wt{t}^{-1}]^{\wedge}$ with 
$t^{pN}\wt{b} \in \wt{A}$. Then, by \eqref{eq:mapbeta}, 
we see that $p^j \wt{t}^{p^{j+1}N}\wt{a}_j \in \wt{A}$ for $0 \leq j 
\leq n-1$. 
Since $\wt{t}^{p^{j+1}N}\wt{a}_j \in \wt{A}[\wt{t}^{-1}]^{\wedge}$, 
this implies that $\wt{t}^{p^{j+1}N}\wt{a}_j \in \wt{A}$. 
Thus we obtain \eqref{eq:condition-a}. 

Next suppose that $t^{pN}b \not\in A$, and take the minimal integer 
$0 \leq l \leq n-1$ with $t^{pN}b \,{\rm mod}\, p^{l+1} \notin 
A/p^{l+1}A$. Take a lift 
$\wt{b} \in \wt{A}[\wt{t}^{-1}]^{\wedge}$ of $b$. 
Then the equations \eqref{eq:mapbeta} for 
$0 \leq j \leq l-1$ imply that $t^{p^{j+1}N}a_j \in A_0$ for 
$0 \leq j \leq l-1$. Then consider the equation \eqref{eq:mapbeta}
mod $p^{l+1}$ for $j = l$. 
Then we see that, on the left hand side, 
$\wt{a}_0^{p^l} + p \wt{a}_1^{p^{l-1}} + \cdots + p^{l-1} \wt{a}_{l-1}^p$ mod 
$p^{l+1}$ belongs to $A/p^{l+1}A$ if we multiply it by $t^{p^{l+1}N}$. 
On the other hand, $F^l\wt{b}$ mod 
$p^{l+1}$ does not belong to $A/p^{l+1}A$ even 
if we multiply it by $t^{p^{l+1}N}$. Thus we see that 
$p^lt^{p^{l+1}N}\wt{a}_l$ mod $p^{l+1}$ does not belong to 
$A/p^{l+1}A$, namely, $t_0^{p^{l+1}N}a_l \notin A_0$. So we have proved the required assertion. 
\end{proof}

Now we finish the proof of the first part of our main theorem: 

\begin{proof}[Proof of Theorem \ref{thm:main}(1)] 
The collection of the filtrations 
$\{\Fil_r W_n\Omega^i_L \}_{r \in \Q_{\geq 0}}$ on 
$W_n\Omega^i_L$ for $L \in \Phi$ defines a ramification filtration 
on $W_n\Omega^i$ by Proposition \ref{prop:ramfil}(2). 
When $n=1$, it coincides the ramification filtration on 
$\Omega^i$ of Koizumi by Proposition \ref{prop:ramfilcomp}(1), 
and when $i=0$, it coincides the ramification filtration on 
$W_nO$ of Koizumi by Proposition \ref{prop:ramfilcomp}(2). 
\end{proof}

We denote the Nisnevich sheaf on $\MCorQ$ associated to the 
ramification filtration $\{\Fil_r W_n\Omega^i_L \}_{r \in \Q_{\geq 0}}$ 
on $W_n\Omega^i$ by $\ul{\rm M}W_n\Omega^i$. The next proposition gives 
a description of $\ul{\rm M}W_n\Omega^i(\fX)$ when 
$\fX = \in \MCorQls$. 

\begin{prop}[{\rm cf. \cite[Lem.~4.5, 4.13]{K23}}]\label{prop:ls-form}
Let $\fX = (X,D_X) \in \MCorQls$ such that 
$|D_X| = \bigcup_{a=1}^m D_a$ with each $D_a$ smooth and 
$D_X = \sum_{a=1}^m b_a D_a$. 
Put $\b := (b_1, \dots, b_a)$ and 
$\lceil \b \rceil :=  (\lceil b_1 \rceil , \dots, \lceil b_a \rceil)$. 
Then we have 
$$ 
\ul{\rm M}W_n\Omega^i(\fX) = 
\Gamma(X, R^iu_{X,*}\cI_{(X,|D_X|)}^{\otimes (-p\lceil \b \rceil + \1)}), 
$$
where $\1 = (1, \dots, 1)$. 
\end{prop}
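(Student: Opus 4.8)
The plan is to realise both sides of the asserted equality as subgroups of $W_n\Omega^i(X^{\circ})$ and show they coincide. For the right-hand side this is legitimate: by Proposition \ref{prop:inj}(4) the canonical map from $R^iu_{X,*}\cI_{(X,|D_X|)}^{\otimes(-p\lceil\b\rceil+\1)}$ to $j_*R^iu_{X^{\circ},*}\cO_{X^{\circ}/W_n}$ is injective, and the target has global sections $W_n\Omega^i(X^{\circ})$ by the Illusie--Raynaud isomorphism (note $X^{\circ}=X\setminus|D_X|$, as $\fX$ is a modulus pair). The left-hand side $\ul{\rm M}W_n\Omega^i(\fX)$ is, by definition of $(W_n\Omega^i)_{\Fil}$, the subgroup of sections bounded by $D_X$. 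Since "bounded by $D_X$" is Zariski-local on $X$ — any diagram \eqref{eq:ramdiag} whose closed point lands in an open $V\subseteq X$ factors through $V$ — and $R^iu_{X,*}\cI_{(X,|D_X|)}^{\otimes(-p\lceil\b\rceil+\1)}$ is a Zariski sheaf, I may assume $X$ affine with $(X,|D_X|)$ lifted to $(\cX,\cD)$ over $W_n$ admitting coordinates $t_1,\dots,t_d$, $\cD_a=\{t_a=0\}$ for $1\le a\le m$; then the right-hand group is $H^i(\Gamma(\cX,\tt^{-p\lceil\b\rceil+\1}\Omega^{\bullet}_{\cX/W_n}(\log)))$.

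For the inclusion $\Gamma(X,R^iu_{X,*}\cI_{(X,|D_X|)}^{\otimes(-p\lceil\b\rceil+\1)})\subseteq\ul{\rm M}W_n\Omega^i(\fX)$, fix such a section $a$. First, applying Proposition \ref{prop:inj}(3) repeatedly — the integers $p\lceil b_a\rceil-1,\,p\lceil b_a\rceil-2,\,\dots,\,p(\lceil b_a\rceil-1)+1$ are prime to $p$ — I rewrite $\cG:=R^iu_{X,*}\cI_{(X,|D_X|)}^{\otimes(-p\lceil\b\rceil+\1)}=R^iu_{X,*}\cI_{(X,|D_X|)}^{\otimes(-p(\lceil\b\rceil-\1))}$. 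Given a diagram \eqref{eq:ramdiag} with $\tilde\rho\colon\Spec O_L\to X$, closed point $s\mapsto x$, set $e_a:=v_L(\tilde\rho^*D_a)\ge0$ and $r:=v_L(\tilde\rho^*D_X)=\sum_ab_ae_a$. If $x\in X^{\circ}$ then $\tilde\rho$ factors through $X^{\circ}$, $r=0$, and $\rho^*(a)$ extends to $O_L$, so $\rho^*(a)\in\Fil_0W_n\Omega^i_L$; so assume $x\in|D_X|$. Then $\tilde\rho$ promotes to a morphism of log schemes $(\Spec O_L,s)\to(X,|D_X|)$, and functoriality of log-crystalline cohomology carries $a$ into $H^i_{\crys}((\Spec O_L,s)/W_n,\cI^{\otimes(-c)})=\varinjlim_{\cP_L}H^i_{\crys}(\cI_{(X_0,D_0)}^{\otimes(-c)})$ with $c=p\sum_a(\lceil b_a\rceil-1)e_a$. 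Since $\lceil b_a\rceil-1<b_a$ for each $a$, we get $\sum_a(\lceil b_a\rceil-1)e_a<\sum_ab_ae_a=r$ strictly (some $e_a\ge1$), hence, being an integer, $\sum_a(\lceil b_a\rceil-1)e_a\le\lceil r\rceil-1$ and $c\le p(\lceil r\rceil-1)\le p\lceil r\rceil-1$. By Proposition \ref{prop:inj}(2), $\rho^*(a)\in\varinjlim_{\cP_L}H^i_{\crys}(\cI^{\otimes(-p\lceil r\rceil+1)})=\Fil'_{p\lceil r\rceil}W_n\Omega^i_L=\Fil_rW_n\Omega^i_L$, so $a$ is bounded by $D_X$.

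For the reverse inclusion let $a\in W_n\Omega^i(X^{\circ})$ be bounded by $D_X$. As $X$ is quasi-compact and $j_*R^iu_{X^{\circ},*}\cO_{X^{\circ}/W_n}=\varinjlim_{\b}R^iu_{X,*}\cI_{(X,|D_X|)}^{\otimes(-\b)}$, we have $a\in H^i_{\crys}(\cI_{(X,|D_X|)}^{\otimes(-\b)})$ for some $\b\ge\c_0:=p\lceil\b\rceil-\1$. At the generic point $\eta_c$ of a component $D_c$ of $|D_X|$ we have $\cO_{X,\eta_c}^{\rm h}=O_{L_c}$ for some $L_c\in\Phi$ with uniformizer $t_c$, and since only $D_c$ passes through $\eta_c$ the tautological diagram there has $v_{L_c}(\tilde\rho_c^*D_X)=b_c$; combining boundedness of $a$ at $\eta_c$, the identity $\Fil_{b_c}W_n\Omega^i_{L_c}=\Fil'_{p\lceil b_c\rceil}W_n\Omega^i_{L_c}=\varinjlim_{\cP_{L_c}}H^i_{\crys}(\cI^{\otimes(-p\lceil b_c\rceil+1)})$, the fact that the $c$-th component of $\c_0$ equals $p\lceil b_c\rceil-1$, and faithfully flat descent along $\cO_{X,\eta_c}\to\cO_{X,\eta_c}^{\rm h}$, one sees that the germ $a_{\eta_c}$ lies in $\cG_{\eta_c}$. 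Thus $a$ lies in $\cG$ at every point of codimension $\le1$, and it remains to conclude $a\in\Gamma(X,\cG)$; since $R^iu_{X,*}\cI_{(X,|D_X|)}^{\otimes(-\b)}/\cG$ is supported on $|D_X|$, it is enough to show this quotient has no associated point of codimension $\ge2$ in $X$, for then a section vanishing at all the $\eta_c$ vanishes.

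This last purity statement is the main obstacle. For $n=1$ it is free, because $\Omega^i_X(\log |D_X|)$ tensored with any divisor supported on $|D_X|$ is locally free, hence reflexive (this is the route of Koizumi's \cite[Lem.~4.5]{K23}, matching the computation behind Proposition \ref{prop:ramfilcomp}(1)). For general $n$ the sheaves $R^iu_{X,*}\cI_{(X,|D_X|)}^{\otimes(-\b)}$ are not locally free, and I would instead prove, by induction on $\dim X$, that each has depth $\ge2$ along every closed subset of codimension $\ge2$ — so that $\cG$ and $R^iu_{X,*}\cI^{\otimes(-\b)}$ both do, whence the quotient has depth $\ge1$ there, which is the claim. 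The base case $\b=\0$ rests on the Cohen--Macaulayness of $W_n\Omega^i_{(X,|D_X|)}=R^iu_{X,*}\cO_{(X,|D_X|)/W_n}$, obtained from Mokrane's weight filtration \cite{M93} reducing to the de Rham--Witt complexes of the smooth strata, which carry the logarithmic analogue of Illusie's canonical filtration \cite{I79} with locally free graded pieces of full dimension; the inductive step descends through the exact sequences \eqref{eq:exactseq}, which express the successive quotients $R^iu_{X,*}\cI^{\otimes(-\b')}/R^iu_{X,*}\cI^{\otimes(-\b'+\e_c)}$ in terms of the meromorphic Hodge--Witt sheaves of the lower-dimensional $\cD_c$. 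The delicate point, where the real work lies, is tracking which of the exponents $b'_c$ are divisible by $p$ and controlling the corresponding connecting maps (multiplication by $\pm b'_c$) in \eqref{eq:exactseq}.
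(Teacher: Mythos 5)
Your first inclusion ($\Gamma(X,R^iu_{X,*}\cI_{(X,|D_X|)}^{\otimes(-p\lceil\b\rceil+\1)})\subseteq\ul{\rm M}W_n\Omega^i(\fX)$) is essentially the paper's argument: reduce via Proposition \ref{prop:inj}(3) to the exponent $-p(\lceil\b\rceil-\1)$, pull back along a log morphism realizing the diagram \eqref{eq:ramdiag}, and use the integer inequality $\sum_a e_a(\lceil b_a\rceil-1)\le\lceil\sum_a e_ab_a\rceil-1$. The paper does this with explicit lifts $\varphi:(\cY,\cE)\to(\cX,\cD)$ and differential forms rather than crystalline functoriality, but the content is the same.

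The reverse inclusion is where your proposal has a genuine gap. You reduce it to the claim that the quotient $R^iu_{X,*}\cI^{\otimes(-\b)}/\cG$ (with $\cG$ your target sheaf) has no associated point of codimension $\ge 2$, so that a section vanishing at the generic points $\eta_c$ of the components $D_c$ vanishes; and you then explicitly defer the proof of this purity statement, calling it ``the main obstacle'' and saying the real work of tracking $p$-divisibility of the exponents and the connecting maps in \eqref{eq:exactseq} remains to be done. As written this is a strategy, not a proof: the depth estimate for meromorphic Hodge--Witt sheaves is precisely the hard content of the proposition, and it cannot be waved through --- for $n>1$ these sheaves are not locally free, and the graded quotients of the exponent filtration are zero except when the exponent crosses a multiple of $p$ (Proposition \ref{prop:inj}(3)), so the induction has to be organized around exactly those jumps.

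The paper avoids any global purity statement by a contrapositive d\'evissage: choose $\c\ge\lceil\b\rceil$ minimal with $\omega\in\Gamma(\cX,\cH^i(\x^{-p(\c-\1)}\Omega^{\bullet}_{\cX/W_n}(\log\cD)))$; if some $c_{j_0}>\lceil b_{j_0}\rceil$, the image of $\omega$ in the quotient complex $\cF_n^{\bullet}$ is nonzero, and a two-step reduction --- first along the $W_n$-length using $0\to\cF_1^\bullet\to\cF_{n_0}^\bullet\to\cF_{n_0-1}^\bullet\to0$, then via the Cartier isomorphism identifying $\cH^i(\cF_1^{\bullet})$ with $\Omega^i_{D_{j_0}/k}(\log D')\oplus\Omega^{i-1}_{D_{j_0}/k}(\log D')$, a locally free sheaf on the smooth variety $D_{j_0}$ --- shows that this nonvanishing survives at the generic point $\xi$ of $D_{j_0}$, contradicting boundedness of $\omega$ at $b_{j_0}$. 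In other words, the only torsion-freeness ever needed is that of the single graded piece $\cH^i(\cF_1^{\bullet})$ over $D_{j_0}$, which is free, rather than a depth bound for the full sheaves. If you want to rescue your route you would have to prove your purity claim by exactly this kind of d\'evissage, at which point you are reproducing the paper's argument in a less direct form.
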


\begin{proof}
First we prove that the right hand side is contained in the left hand side. 
Because we may work locally, we may assume that 
$(X,|D_X|)$ is liftable to a pair $(\cX,\cD)$ of an affine 
smooth scheme $\cX = \Spec A$ over 
$W_n$ and a relative simple normal crossing divisor 
$\cD = \bigcup_{a=1}^m \cD_a$ of $\cX$, where $\cD_a$ is a smooth lifting 
of $D_a$ and defined by some $x_a \in A$. 
Put $\cD_{\cX} := \sum_{a=1}^m b_a \cD_a$. 
Then 
\begin{equation}\label{eq:local-ls}
\Gamma(X, R^iu_{X,*}\cI_{(X,|D_X|)}^{\otimes (-p\lceil \b \rceil + \1)})
= \Gamma(\cX, \cH^i(\x^{-p\lceil \b \rceil + \1}
\Omega^{\bullet}_{\cX/W_n}(\log \cD))) = 
\Gamma(\cX, \cH^i(\x^{-p(\lceil \b \rceil - \1)}
\Omega^{\bullet}_{\cX/W_n}(\log \cD))), 
\end{equation}
where we used the multi-index notation $\x^{-p\lceil \b \rceil + \1} := 
\prod_{a=1}^m x_a^{-p\lceil b_a \rceil + 1}$ and 
the last equality follows from Proposition \ref{prop:inj}(3). 
Now suppose given a commutatative diagram \eqref{eq:ramdiag}, and take 
a uniformizer $t_0$ of $O_L$. 
Then there exists some $(Y,E,g) \in \cP'_L$ such that the diagram 
\eqref{eq:ramdiag} is induced from a morphism 
$\varphi_0: (Y,E) \lra (X,|D_X|)$ of log schemes. Then take 
a lift $(\cY,\cE),t$ of $(Y,E),t_0$ as in Remark \ref{rem:lift}(2). 
Then we can take a lift $\varphi: (\cY,\cE) \lra (\cX, \cD)$ of 
$\varphi_0$. In this setting, if we put 
$e_a := v_L(\widetilde{\rho}^*D_a)$, we have 
$\varphi^*x_a = u_a t^{e_a}$ for some $u_a \in 
\Gamma(\cY, \cO_{\cY}^{\times})$. Now assume that 
$[\omega] \in 
\Gamma(\cX, \cH^i(\x^{-p(\lceil \b \rceil - \1)}
\Omega^{\bullet}_{\cX/W_n}(\log \cD)))$. 
Then 
$\x^{p(\lceil \b \rceil -\1)}\omega \in 
\Gamma(\cX, \Omega^{i}_{\cX/W_n}(\log \cD))$ and so 
$\varphi^*(\x^{p(\lceil \b \rceil -1)}\omega) 
= u t^{\sum_a pe_a(\lceil b_a \rceil -\1)}\varphi^*\omega 
\in \Gamma(\cY, \Omega^{i}_{\cY/W_n}(\log \cE))$, 
where $u$ is some unit. Since 
$\sum_a e_a(\lceil b_a \rceil - 1) < \sum_a e_ab_a \leq 
\lceil \sum_a e_ab_a \rceil$, we have 
$\sum_a e_a(\lceil b_a \rceil - 1) \leq \lceil \sum_a e_ab_a \rceil - 1$ 
and so $t^{-p(\lceil \sum_a e_a b_a \rceil -1)}\varphi^*\omega 
\in \Gamma(\cY, \Omega^{i}_{\cY/W_n}(\log \cE))$. Thus 
$\varphi^*[\omega] \in \Gamma(\cY, \cH^i(
t^{-p(\lceil \sum_a e_a b_a \rceil -1)}\Omega^{\bullet}_{\cY/W_n}(\log \cE)))
= 
\Gamma(\cY, \cH^i(
t^{-p\lceil \sum_a e_a b_a \rceil +1}\Omega^{\bullet}_{\cY/W_n}(\log \cE))))
$ 
and so $\rho^*[\omega] \in \Fil_{\lceil \sum_a e_a b_a \rceil}W_n\Omega^i_L
= \Fil_{v_L(\widetilde{\rho}^*D_X)}W_n\Omega^i_L$. 
Hence $[\omega]$ belongs to $\ul{\rm M}W_n\Omega^i(\fX)$ by definition of 
$\ul{\rm M}W_n\Omega^i$. 

Next we prove that the left hand side is contained in the right hand side. 
Because we may work locally, we can keep the notation in the previous 
paragraph. Let $\omega$ be an element in $\ul{\rm M}W_n\Omega^i(\fX)$ which 
does not belong to the group \eqref{eq:local-ls}. Take 
$\c = (c_1, \dots, c_m) \in \N^m$ with 
$c_j \geq \lceil b_j \rceil \, (\forall j)$ such that 
$$ \omega \in 
\Gamma(X, R^iu_{X,*}\cI_{(X,|D_X|)}^{\otimes (-p\c + \1)})
= 
\Gamma(X, R^iu_{X,*}\cI_{(X,|D_X|)}^{\otimes (-p(\c - \1))}) 
= 
\Gamma(\cX, \cH^i(\x^{-p(\c - \1)}
\Omega^{\bullet}_{\cX/W_n}(\log \cD))) 
$$ 
and that $|\c|$ is minimal among the indices satisfying this condition. 
Also, take an index $j_0$ with $c_{j_0} > \lceil b_{j_0} \rceil$. Then 
$\omega$ does not belong to 
$$\Gamma(\cX, \cH^i(\x^{-p(\c - \e_{j_0} - \1)}
\Omega^{\bullet}_{\cX/W_n}(\log \cD))) 
= 
\Gamma(\cX, \cH^i(\x^{-p(\c - \1) + \e_{j_0}}
\Omega^{\bullet}_{\cX/W_n}(\log \cD))).$$ 
Since we have the exact sequence 
\begin{align*}
\Gamma(\cX, \cH^i(\x^{-p(\c - \1) + \e_{j_0}} \Omega^{\bullet}_{\cX/W_n}(\log \cD)))
& \lra  
\Gamma(\cX, \cH^i(\x^{-p(\c - \1)} \Omega^{\bullet}_{\cX/W_n}(\log \cD))) 
\\ & \lra  
\Gamma\left(\cX, \cH^i\left(
\dfrac{\x^{-p(\c - \1)}\Omega^{\bullet}_{\cX/W_n}(\log \cD)}{\x^{-p(\c - \1) + \e_{j_0}} \Omega^{\bullet}_{\cX/W_n}(\log \cD)}  \right) \right), 
\end{align*}
the image $\omega_1$ of $\omega$ in 
$\Gamma\left(\cX, \cH^i\left(
\frac{\x^{-p(\c - \1} \Omega^{\bullet}_{\cX/W_n}(\log \cD)}{\x^{-p(\c - \1) + \e_{j_0}} \Omega^{\bullet}_{\cX/W_n}(\log \cD)}  \right) \right)$ is nonzero. 
To lighten the notation, put $\cF_n^{\bullet} := \frac{\x^{-p(\c - \1} \Omega^{\bullet}_{\cX/W_n}(\log \cD)}{\x^{-p(\c - \1) + \e_{j_0}} \Omega^{\bullet}_{\cX/W_n}(\log \cD)}$ and for $0 \leq n' \leq n$, let 
$\cF_{n'} := \cF_n \otimes_{W_n} W_{n'}$. Let $1 \leq n_0 \leq n$ be the minimal integer such that 
the image $\omega_2$ of $\omega_1$ in 
$\Gamma(\cX, \cH^i(\cF^{\bullet}_{n_0}))$ is nonzero. Since we have the exact sequence 
$$ 
\Gamma(X, \cH^i(\cF^{\bullet}_{1})) \os{\alpha}{\lra}
\Gamma(\cX, \cH^i(\cF^{\bullet}_{n_0})) \os{\beta}{\lra}
\Gamma(\cX, \cH^i(\cF^{\bullet}_{n_0-1})), 
$$ 
we see that there exists a nonzero element $\omega_3 \in 
\Gamma(X, \cH^i(\cF^{\bullet}_{1}))$ with $\alpha(\omega_3) = \omega_2$. 
Then note that we have the isomorphism of complexes 
\begin{align*}
\cF^{\bullet}_1 = 
\dfrac{\x^{-p(\c - \1)}\Omega^{\bullet}_{X/k}(\log D)}{\x^{-p(\c - \1) + \e_{j_0}} \Omega^{\bullet}_{X/k}(\log D)}  
& = 
x_{j_0}^{-c_{j_0}} (\x^{-p(\c' - \1')} \Omega^{\bullet}_{D_{j_0}/k}(\log D')) \oplus 
x_{j_0}^{-c_{j_0}} (\x^{-p(\c' - \1')} \Omega^{\bullet-1}_{D_{j_0}/k}(\log D')) \wedge \dlog x_{j_0} \\ 
& \cong \x^{-p(\c' - \1')} \Omega^{\bullet}_{D_{j_0}/k}(\log D') \oplus 
\x^{-p(\c' - \1')} \Omega^{\bullet-1}_{D_{j_0}/k}(\log D') \\
& \cong \Omega^{\bullet}_{D_{j_0}/k}(\log D') \oplus \Omega^{\bullet-1}_{D_{j_0}/k}(\log D')
\end{align*}
(where $\x^{-p(\c' - \1')} := \prod_{\substack{1 \leq a \leq m \\ a \not= j_0}}
x_a^{-p(c_a - 1)}$ and 
$D' = \bigcup_{a \not= j_0}(D_a \cap X')$): The isomorphism on each degree follows from the Zariski version of \eqref{eq:decomp-omega-2}, and the isomorphism as complexes follows from the computation as in \eqref{eq:calcdiff} and the fact that the complexes above are of characteristic $p$. Then, by Cartier inverse isomorphism for log schemes, we have an isomorphism 
$$\cH^i(\cF^{\bullet}_{1}) \cong \Omega^{i}_{D_{j_0}/k}(\log D') \oplus \Omega^{i-1}_{D_{j_0}/k}(\log D').$$
%
%
Let $U \lra X$ be any affine etale morphism whose image contains 
the generic point $\xi$ of $D_{j_0}$ and which does not meet 
any $D_a$ for $a \not= j_0$, and put $U' := U \times_X D_{j_0}$, 
$D_U := U \times_X |D_X| = U \times_X D_{j_0}$. 
Then 
$$\cH^i(\cF^{\bullet}_{1}|_U) \cong \Omega^{i}_{U'/k} \oplus \Omega^{i-1}_{U'/k} $$
and $\omega_3$ is nonzero in it. 
Then, by construction, we see that the image 
of $\omega$ in 
$$ 
\Gamma(U, R^iu_{U,*}\cI_{(U,D_U)}^{\otimes (-pc_{j_0} + 1)})
= 
\Gamma(U, R^iu_{U,*}\cI_{(U,D_U)}^{\otimes (-p(c_{j_0} - 1))}) 
$$ 
does not belong to 
$
\Gamma(U, R^iu_{U,*}\cI_{(U,D_U)}^{\otimes (-p(c_{j_0} - 1)+1)}) 
\supseteq 
\Gamma(U, R^iu_{U,*}\cI_{(U,D_U)}^{\otimes (-p\lceil b_{j_0} \rceil +1)})$. 
This implies that, if we define $L$ by $L = \Frac \cO_{X,\xi}^{\rm h}$, 
the image of $\omega$ in $W_n\Omega^i_L$ does not belong to 
$\Fil_{b_{j_0}}W_n\Omega^i_L$ and so $\omega$ does not belong to 
$\ul{\rm M}W_n\Omega^i(\fX)$. This is a contradiction. So 
$\omega$ belongs to the group \eqref{eq:local-ls} and so we are done. 
\end{proof}

Now we prove the second part of our main theorem. 

\begin{proof}[Proof of Theorem \ref{thm:main}(2)]
The property (3) in Theorem \ref{thm:koizumi1} follows immediately from 
Proposition \ref{prop:ls-form}. The property (2) in Theorem \ref{thm:koizumi1} 
follows from Proposition \ref{prop:ls-form} and the quasi-coherence of 
the sheaves of the form 
$R^iu_{X,*}\cI_{(X,|D_X|)}^{\otimes (-p\lceil \b \rceil + \1)}$, which is proved in Proposition \ref{prop:inj}(1). 

We prove that the sheaf $\ul{\rm M}W_n\Omega^i$ satisfies the property (1) 
in Theorem \ref{thm:koizumi1}(1). Since we may work locally, we may assume 
that 
$(X,|D_X|)$ is liftable to a pair $(\cX,\cD)$ of an affine 
smooth scheme $\cX = \Spec A$ over 
$W_n$ and a relative simple normal crossing divisor 
$\cD = \bigcup_{a=1}^m \cD_a$ of $\cX$, where $\cD_a$ is a smooth lifting 
of $D_a$ and defined by some $x_a \in A$. 
In this setting, using the isomorphism 
$$ 
R^iu_{X,*}\cI_{(X,|D_X|)}^{\otimes (-p\c + \1)}
= 
R^iu_{X,*}\cI_{(X,|D_X|)}^{\otimes (-p(\c - \1))} 
= 
\cH^i(\x^{-p(\c - \1)}\Omega^{\bullet}_{\cX/W_n}(\log \cD)) 
$$ 
and long exact sequences associated to the exact sequences 
\begin{align*}
0 \lra \x^{-p(\c - \1) + \e_{j_0}} \Omega^{\bullet}_{\cX/W_n}(\log \cD)
& \lra  
\x^{-p(\c - \1)} \Omega^{\bullet}_{\cX/W_n}(\log \cD)
\lra 
\dfrac{\x^{-p(\c - \1)}\Omega^{\bullet}_{\cX/W_n}(\log \cD)}{\x^{-p(\c - \1) + \e_{j_0}} \Omega^{\bullet}_{\cX/W_n}(\log \cD)} \lra 0, 
\end{align*}
\begin{align*}
0 \lra \cF^{\bullet}_1 \lra \cF^{\bullet}_n \lra \cF^{\bullet}_{n-1} \lra 0
\end{align*}
(where $\cF_n^{\bullet} := \frac{\x^{-p(\c - \1} \Omega^{\bullet}_{\cX/W_n}(\log \cD)}{\x^{-p(\c - \1) + \e_{j_0}} \Omega^{\bullet}_{\cX/W_n}(\log \cD)}$ and 
$\cF_{n'} := \cF_n \otimes_{W_n} W_{n'}$) and the isomporphism of complexes 
\begin{align*}
\cF_1 & = 
x_{j_0}^{-c_{j_0}} (\x^{-p(\c' - \1')} \Omega^{\bullet}_{D_{j_0}/k}(\log D')) \oplus 
x_{j_0}^{-c_{j_0}} (\x^{-p(\c' - \1')} \Omega^{\bullet-1}_{D_{j_0}/k}(\log D')) \wedge \dlog x_{j_0} \\ 
& \cong \x^{-p(\c' - \1')} \Omega^{\bullet}_{D_{j_0}/k}(\log D') \oplus 
\x^{-p(\c' - \1')} \Omega^{\bullet-1}_{D_{j_0}/k}(\log D') \\
& \cong \Omega^{\bullet}_{D_{j_0}/k}(\log D') \oplus \Omega^{\bullet-1}_{D_{j_0}/k}(\log D')
\end{align*}
(where $\x^{-p(\c' - \1')} := \prod_{\substack{1 \leq a \leq m \\ a \not= j_0}}
x_a^{-p(c_a - 1)}$ and 
$D' = \bigcup_{a \not= j_0}(D_a \cap X')$) appeared in the proof of Propsition \ref{prop:ls-form} and usual weight filtration on log de Rham complexes, we can reduce the claim to the case $n=1$ and $D_X = \emptyset$, 
namely, it suffices to prove the quasi-isomorphism 
$$ \cH^i(\Omega^{\bullet}_{X/k}) \lra 
R\pi_*\cH^i(\Omega^{\bullet}_{X \times \ol{\square}/k}), $$
where $\pi$ is the projection $X \times \ol{\square} \lra X$. 
By Cartier isomorphism, the above map is rewritten as 
$$ \Omega^i_{X/k} \lra 
R\pi_* \Omega^i_{X \times \ol{\square}/k}, $$
and we have 
\begin{align*}
R\pi_* \Omega^i_{X \times \ol{\square}/k}
& = R\pi_*\pi^*\Omega^i_{X/k} \oplus 
R\pi_*(\pi^* \Omega^{i-1}_{X/k} \otimes \Omega^1_{X \times \ol{\square}/X}) \\ 
& = (\Omega^i_{X/k} \otimes R \pi_* \cO_{X \times \Pr^1_k})
\oplus (\Omega^{i-1}_{X/k} \otimes R \pi_* \Omega^1_{X \times \ol{\square}/X})
= \Omega^i_{X/k}. 
\end{align*}
So the property (1) is proved and thus the proof is finished. 
\end{proof}

\appendix

\section{Comatibility of pushforward maps}

In this appendix, we prove a compatibility result between 
the pushforward maps of Hodge-Witt sheaves and those of 
de Rham complexes. 

Let $f: \cX' \lra \cX$ be a finite surjective 
morphism between connected smooth affine schemes 
over $W_n$ and let $f_0: X' \lra X $ be the mod $p$ reduction of $f$. 
Also, let $i$ be a nonnegative integer. 
Then, as we reviewed in Section 1, we have the pushforward map 
\begin{equation}\label{eq:push-hw}
f_{0,*}^i: W_n\Omega^i_{X'} \lra W_n\Omega_X^i 
\end{equation}
of Hodge-Witt sheaves. 
On the other hand, we have also the pushforward map 
$$f_*^i: \Omega^i_{\cX'/W_n} \lra \Omega^i_{\cX/W_n}$$ of 
Hodge sheaves over $W_n$: It is defined as the sheafification of 
the pushforward map in \cite[Def.~2.3.2]{CR11}. 
(In \cite{CR11}, the pushforward maps are defined for varieties over a field using the duality theory of Hartshorne \cite{H66} and 
Conrad \cite{C00}, \cite{C11}. But the arguments in Section 2 of 
\cite{CR11} works also for schemes of finite type over $W_n$ 
verbatim.) 
By \cite[Prop.~2.3.3 (3)]{CR11}, $f_*^i$ is equal to the map 
$\tau_f$ in \cite[Prop.~2.2.23]{CR11}, and then the explicit 
description of the map $\tau_f$ in  \cite[(2.2.24)]{CR11} shows that 
the compatibility with the differentials 
$d \circ f_*^i = f_*^{i+1} \circ d$ holds. Hence we have the pushforward map 
$$ f_*^{\bullet}: \Omega^{\bullet}_{\cX'/W_n} \lra \Omega^{\bullet}_{\cX/W_n}$$ of de Rham complexes and thus the pushforward map 
\begin{equation}\label{eq:push-drc}
\cH^i(f_*^{\bullet}): \cH^i(\Omega^{\bullet}_{\cX'/W_n}) \lra 
\cH^i(\Omega^{\bullet}_{\cX/W_n}) 
\end{equation}
of de Rham cohomology sheaves. 

\begin{rem}\label{rem:push}
Because $f_*^i$ is equal to the map 
$\tau_f$ in \cite[Prop.~2.2.23]{CR11}, it satisfies 
the projection formula \cite[Prop.~2.2.23]{CR11}. 
We use this fact in the main body of the article.
\end{rem}

Now recall that, in the situation above, there exist isomorphisms 
\begin{equation}\label{eq:ir}
\cH^i(\Omega^{\bullet}_{\cX/W_n}) \cong 
R^iu_{X,*}\cO_{X/W_n} \cong 
W\Omega^i_{X}, 
\end{equation}
\begin{equation}\label{eq:ir'}
\cH^i(\Omega^{\bullet}_{\cX'/W_n}) \cong
R^iu_{X',*}\cO_{X'/W_n} \cong 
W\Omega^i_{X'}
\end{equation}
due to Illusie-Raunaud \cite[III (1.5.2), (1.5.3)]{IR83}, where $u_X: (X/W_n)_{\crys}^{\sim} \lra X_{\Zar}^{\sim}, u_{X'}: (X'/W_n)_{\crys}^{\sim} \lra {X'}_{\Zar}^{\sim}$ 
are projections from crystalline topoi to Zariski topoi. 
We prove the following compatibility result of 
the pushforward maps \eqref{eq:push-hw}, \eqref{eq:push-drc} and 
isomorphisms \eqref{eq:ir}, \eqref{eq:ir'}: 

\begin{prop}\label{prop:pushforward}
Let the notations be as above. Then we have the commutative diagram 
\[ 
\xymatrix{
\cH^i(\Omega^{\bullet}_{\cX'/W_n}) 
\ar[d]^{\cong} \ar[r]^{\cH^i(f_*^{\bullet})} & 
\cH^i(\Omega^{\bullet}_{\cX/W_n}) \ar[d]^{\cong} \\
W_n\Omega^i_{X'} \ar[r]^{f_{0,*}^i} & 
W_n\Omega_X^i, 
}
\]
where the vertical arrows are the isomorphisms \eqref{eq:ir}, \eqref{eq:ir'}. 
\end{prop}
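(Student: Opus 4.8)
The plan is to deduce the commutativity by reducing first to a local and then to a generic-point statement, after which it becomes a comparison of two trace maps of function fields, treated by dévissage into a separable and a purely inseparable part; the latter is where the real work lies.

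First I would record the reductions. The vertical isomorphisms \eqref{eq:ir}, \eqref{eq:ir'} are natural, and the horizontal pushforwards are compatible with restriction to Zariski opens of $X$, so the claim is local on $X$ and we may keep the notation of the statement. Since $f_0$ is finite, $f_{0,*}$ commutes with the (flat) localization at the generic point of $X$, where --- $X,X'$ being integral --- it becomes ${\rm Tr}_{k(X')/k(X)}$; by the flat base change property of the Grothendieck duality trace $\tau_f=f_*^{\bu}$, the same holds for $\cH^i(f_*^{\bu})$ on the de Rham side. For smooth $X$ the sheaf $W_n\Omega^i_X=R^iu_{X,*}\cO_{X/W_n}$ has no nonzero sections supported in positive codimension --- this follows, e.g., from the Gersten resolution for de Rham--Witt sheaves, or can be extracted from Proposition \ref{prop:inj}(4) after comparing with the logarithmic variant and passing to the limit over dense opens --- so it injects into its generic stalk $W_n\Omega^i_{k(X)}$, and likewise for $X'$. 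Hence it suffices to prove that the Illusie--Raynaud isomorphism over the function fields $k(X)\subseteq k(X')$ intertwines $\cH^i(f_*^{\bu})$ with ${\rm Tr}_{k(X')/k(X)}$ on de Rham--Witt sheaves. (Alternatively one may try to argue directly at the sheaf level, keeping $\cX,\cX'$ as in Remark \ref{rem:lift}(5); the steps below adapt.)

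Now I would factor $k(X')/k(X)$ as a purely inseparable extension of a separable one, and treat the two cases. For the separable step, spreading out produces a finite étale morphism $f_0\colon V'\lra V$ of smooth $k$-schemes; lifting it uniquely to a finite étale $f\colon \cV'\lra\cV$ over $W_n$ one has $\Omega^{\bu}_{\cV'/W_n}=\cO_{\cV'}\otimes_{\cO_{\cV}}\Omega^{\bu}_{\cV/W_n}$ and $W_n\Omega^{\bu}_{V'}=W_n\cO_{V'}\otimes_{W_n\cO_{V}}W_n\Omega^{\bu}_{V}$, both pushforwards being $\id\otimes$(the trace of the finite étale algebra); naturality of \eqref{eq:ir} and étale base change for crystalline cohomology then reduce the compatibility to the evident degree-$0$ statement for the trace on Witt vectors. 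The purely inseparable step reduces to the case of an extension of degree $p$, equivalently a situation governed by the relative Frobenius, and here no such reduction is available: I would instead make the Illusie--Raynaud isomorphism explicit by choosing a lift of Frobenius, exactly as with the maps $\alpha,\beta$ in the proof of Proposition \ref{prop:ramfilcomp}(2), and compare it against the explicit residue formula \cite[(2.2.24)]{CR11} for $\tau_f=f_*^{\bu}$ in a monogenic situation $A'=A[\theta]$.

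The main obstacle is precisely this purely inseparable (Frobenius) case: the compatibility there is a genuine computation, reconciling the residue-theoretic description of the Grothendieck--Serre trace on differential forms with the combinatorics of Witt vectors and of the operators $F,V,d$ entering the Illusie--Raynaud description, and one must also verify that \eqref{eq:ir} is functorial for the finite morphisms at hand --- a point that has to be read off carefully from \cite{IR83} or reproved. A more structural alternative that I would also pursue is to characterize both pushforwards as the Grothendieck duality trace --- using Ekedahl's duality for the de Rham--Witt complex on the one side and Conrad--Rülling's duality on the other --- and to invoke uniqueness of trace maps, so that the statement follows once one knows that the comparison isomorphism $\Omega^{\bu}_{\cX/W_n}\simeq Ru_{X,*}\cO_{X/W_n}\simeq W_n\Omega^{\bu}_X$ is compatible with the two duality formalisms; this trades the explicit computation for a comparison of dualities, cleaner in principle but requiring more machinery.
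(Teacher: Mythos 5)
Your overall strategy --- localize at the generic point, then d\'evissage the finite field extension $k(X)\subseteq k(X')$ into a separable part and a purely inseparable part --- is a legitimate and genuinely different route from the one the paper takes, and the preliminary reductions (injectivity of $W_n\Omega^i_X$ into its generic stalk for smooth $X$, compatibility of both pushforwards with localization, functoriality under composition) are all sound. But the proposal has a genuine gap exactly where you say the real work lies: the purely inseparable case is not proved, only announced. In your decomposition that case \emph{is} the proposition --- reconciling the residue formula of \cite[(2.2.24)]{CR11} for $\tau_f$ in a monogenic situation $A'=A[\theta]/(\theta^p-a)$ with the Witt-vector description of ${\rm Tr}$ through the Illusie--Raynaud isomorphism is a substantial computation involving $F$, $V$, $d$ and Teichm\"uller lifts, and nothing in the proposal indicates how it closes. (A smaller instance of the same gap appears in the \'etale step: the ``evident degree-$0$ statement'' comparing the trace of the finite \'etale $W_n$-algebra $\cO_{\cV'}/\cO_{\cV}$ with the trace of $W_n\cO_{V'}/W_n\cO_V$ under $\cH^0(\Omega^\bullet_{\cV/W_n})\cong W_n\cO_V$ is itself a nontrivial verification, not a tautology.) The alternative via uniqueness of duality traces likewise defers the problem to the unproven compatibility of the comparison isomorphism with Ekedahl's and Conrad--R\"ulling's duality formalisms.

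For contrast, the paper's proof avoids ever having to compute the trace of a ramified or inseparable finite map directly: it factors $f$ as $\cX'\hookrightarrow\mathbb{P}^d_{\cX}\to\cX$ and uses the Chatzistamatiou--R\"ulling description of both pushforwards through this factorization, so that everything reduces to regular closed immersions of codimension one. There both maps are given by the explicit local-cohomology formula $\omega\mapsto -[\dlog t\wedge\omega]$ (resp.\ $\omega\mapsto -[\dlog\ul{\ol t}\wedge\omega]$), and after a PD-envelope argument showing independence of the lift one may choose a Frobenius lift with $\varphi(t)=t^p$, for which the Illusie--Raynaud isomorphism visibly sends $[\dlog t]$ to $[\dlog\ul{\ol t}\,]$. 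If you want to salvage your approach you must actually carry out the monogenic purely inseparable computation; as written, the hardest and essential step is missing.
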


\begin{proof}
Take a factorization 
$\cX' \overset{\iota}{\hookrightarrow} \cP = {\mathbb{P}}^d_{\cX} 
\overset{\pi}{\lra} \cX$ of $f$ where $\iota$ is a closed immersion 
and $\pi$ is the natural projection, and let 
$X' \overset{\iota_0}{\hookrightarrow} P = {\mathbb{P}}^d_{X} 
\overset{\pi_0}{\lra} X$ be the mod $p$ reduction of it. 
By the definition and the functoriality of pushforward maps 
in \cite[Def.~2.3.2, Prop.~2.3.3]{CR11}, 
the map $f_*^i$ is written as the composite 
\begin{equation*}
f_*\Omega^{i}_{\cX'/W_n} = 
\pi_*\iota_*\Omega^{i}_{\cX'/W_n} 
\os{\pi_*\psi_1^i}{\lra}
\pi_*(\cH^d_{\cX'}(\Omega^{i+d}_{\cP/W_n}))
\os{\pi_*\psi_2^i}{\lra} \pi_*(\Omega^{i+d}_{\cP/W_n}[d]) 
\os{\psi_3^i}{\lra} \Omega^{i}_{\cX/W_n}, 
\end{equation*}
where the maps $\psi_j^i \, (j=1,2,3)$ are described as follows: 

\smallskip 

\noindent
(1) \, 
%
If we work locally and assume that $\iota$ is defined by 
a regular sequence $t_1, \dots, t_d$, 
the map $\psi_1^i$ is described as 
$$ 
\omega \mapsto (-1)^d ({\rm dlog}\,t_1 \wedge \cdots \wedge {\rm dlog}\,t_d 
\wedge \omega) $$
(\cite[Prop.~2.2.19]{CR11}). Here the right hand side is 
regarded as an element in 
$\cH^d_{\cX'}(\Omega^{i+d}_{\cP/W_n})$ via the isomorphism 
$$ \cH^d_{\cX'}(\Omega^{i+d}_{\cP/
W_n}) \cong 
\Omega^{i+d}_{\cU_{[1,d]}/W_n}/
\sum_{\substack{J \subseteq [1,d] \\ \# J = d-1}} 
\Omega^{i+d}_{\cU_{J}/W_n}, $$
where, for $J \subseteq [1,d]$, $\cU_J$ is the open subscheme 
$\{ \forall j \in J, t_j \not= 0\}$ of $\cP$. 
\\
(2) \, The map $\psi_2^i$ is induced by the canonical map 
$$ \cH^d_{\cX'}(\Omega^{i+d}_{\cP/W_n}) = 
R\ul{\Gamma}_{\cX'}(\Omega^{i+d}_{\cP/W_n})[d] \lra 
\Omega^{i+d}_{\cP/W_n}[d].$$
(3) \, The map $\psi_3^i$ 
is an isomorphism. (This is proved in the same way as 
\cite[Lem.~2.4.3]{CR11}.) By the functoriality of pushforward maps,  
if we take a section $s: \cX \lra \cP$ of $\pi$, it is the inverse of the map 
$\pi_*\phi_3^i: \Omega^i_{\cX/W_n} = \pi_*s_*\Omega^i_{\cX/W_n} \lra 
\pi_*(\Omega^{i+d}_{\cP/W_n}[d])$ 
which is defined as the map $\pi_*\psi_2^i \circ \pi_*\psi_1^i$ for the closed immersion $s$. 

\smallskip 

We see by the explicit description of the map $\psi_1^i$ given above 
the map $\psi_1^i$ is compatible with the differential $d$ of de Rham complex. The map $\psi_3^i$ is also compatible with the differential $d$ 
because so is $(\psi_3^i)^{-1} = \pi_*\phi_3^i$, 
and the map $\psi_2^i$ is 
obviously compatible with the differential $d$. Thus 
the map $\cH^i(f_*^{\bullet})$ is written as the composite 
\begin{align*}
f_*\cH^i(\Omega^{\bullet}_{\cX'/W_n}) = 
\pi_*\iota_*\cH^i(\Omega^{\bullet}_{\cX'/W_n}) 
& \os{\pi_*\cH^i(\psi_1^{\bullet})}{\lra}
\pi_*(\cH^d_{\cX'}(\cH^{i+d}(\Omega^{\bullet}_{\cP/W_n}))) \\ 
& \os{\pi_*\cH^i(\psi_2^{\bullet})}{\lra} 
\pi_*(\cH^{i+d}(\Omega^{\bullet}_{\cP/W_n})[d]) 
\os{\cH^i(\psi_3^{\bullet})}{\lra} \cH^i(\Omega^{\bullet}_{\cX/W_n}), 
\end{align*}
where $\cH^i(\psi_1^{\bullet})$ has the description 
$$ 
[\omega] \mapsto [(-1)^d ({\rm dlog}\,t_1 \wedge \cdots \wedge {\rm dlog}\,t_d 
\wedge \omega)] $$
as in (1) above, where $[-]$ means the cohomology class, and 
$\cH^i(\psi_3^{\bullet})$ is the inverse of the map 
$$ \pi_*\cH^i(\phi_3^{\bullet}): 
\cH^i(\Omega^{\bullet}_{\cX/W_n}) = 
\pi_*s_*\cH^i(\Omega^{\bullet}_{\cX/W_n}) \lra 
\pi_*\cH^{i+d}(\Omega^{\bullet}_{\cP/W_n}). 
$$ 

On the other hand, the pushforward map $f_{0,*}^i$ has a similar description: 
By the functoriality of pushforward map (\cite[Prop.~2.3.3]{CR12}), 
the maps $f_{0,*}^i$ is written as the composite 
\begin{align*}
f_{0,*}W_n\Omega^{i}_{X'} = 
\pi_{0,*}\iota_{0,*}W_n\Omega^{i}_{X'} 
& \os{\pi_{0,*}\psi_{0,1}^i}{\lra}
\pi_{0,*}(\cH^d_{X'}(W_n\Omega^{i+d}_{P})) \\ & 
\os{\pi_{0,*}\psi_{0,2}^i}{\lra} \pi_{0,*}(W_n\Omega^{i+d}_{P}[d]) 
\os{\psi_{0,3}^i}{\lra} W_n\Omega^{i}_{X}, 
\end{align*}
where the maps $\psi_{0,j}^i \, (j=1,2,3)$ are described as follows: 

\smallskip 

\noindent
(1)' \, 
If we work locally and assume that $\iota_0$ is defined by 
regular sequence $\ol{t}_1, \dots, \ol{t}_d$, $\psi_{0,1}^i$ is described as 
$$ 
\omega \mapsto (-1)^d ({\rm dlog}\,\ul{\ol{t}_1} \wedge \cdots \wedge 
{\rm dlog}\,\ul{\ol{t}_d}
\wedge \omega), $$
where $\ul{\ol{t}_j}$ is the Teichm\"uller lift of $\ol{t}_j$ 
(\cite[Prop.~2.4.1]{CR12}). Here the right hand side is 
regarded as an element in 
$\cH^d_{X'}(W_n\Omega^{i+d}_{P})$ via the isomorphism 
$$ \cH^d_{X'}(W_n\Omega^{i+d}_{P}) \cong 
W_n\Omega^{i+d}_{U_{[1,d]}}/
\sum_{\substack{J \subseteq [1,d] \\ \# J = d-1}} 
W_n\Omega^{i+d}_{U_{J}}, $$
where, for $J \subseteq [1,d]$, $U_J$ is the open subscheme 
$\{ \forall j \in J, t_j \not= 0\}$ of $P$. \\
(2)' \, The map $\psi_{0,2}^i$ is induced by the canonical map 
$$ \cH^d_{X'}(W_n\Omega^{i+d}_{P}) = 
R\ul{\Gamma}_{X'}(W_n\Omega^{i+d}_{P})[d] \lra 
W_n\Omega^{i+d}_{P}[d].$$
(See \cite[Prop.~2.4.1]{CR12} for the first equality.) \\
(3)' \, The map $\psi_{0,3}^i$ 
is an isomorphism (\cite[Lem.~2.4.3]{CR12}).  
By the functoriality of pushforward maps,  
if we take a section $s_0: X \lra P$ of $\pi_0$, it is the inverse of the map 
$\pi_{0,*}\phi_{0,3}^i: W_n\Omega^i_{X} = \pi_{0,*}s_{0,*}W_n\Omega^i_{X} \lra 
\pi_{0,*}(W_n\Omega^{i+d}_{P}[d])$ 
which is defined as the map $\pi_{0,*}\psi_{0,2}^i \circ 
\pi_{0,*}\psi_{0,1}^i$ 
for the closed immersion $s_0$. 

\smallskip 

By these descriptions, to prove the proposition, it suffices to prove the 
compatibility of the map $\cH^i(\psi_2^{\bullet} \circ \psi_1^{\bullet})$ 
and the map $\psi_{0,2} \circ \psi_{0,1}$, and that of 
the map $\cH^i(\phi_3^{\bullet})$ and the map $\phi_{0,3}$. 
Hence it suffices to prove the compatibility of the pushforward maps 
for regular closed immersions via the isomphisms of Illusie-Raynaud. 
Moreover, since this claim is Zariski local 
and since the pushforward maps have functoriality, 
we may work with affine schemes and we may assume that 
the regular closed immersions in consideration are 
of codimension $1$ defined by a single section. 
So, in the following, 
assume we are given a regular closed immersion 
$\iota: \cX \hookrightarrow \cP$ of affine smooth schemes over $W_n$ 
defined by a section $t$ and let $\iota_0:X \hookrightarrow P, \ol{t}$ 
be the mod $p$ reduction of $\iota, t$ respectively and we prove the 
compatibility of the pushforward map 
$$ \cH^i(\iota_*^{\bullet}): 
\iota_*\cH^i(\Omega^{\bullet}_{\cX}) 
\os{\cH^i(\psi_1^{\bullet})}{\lra}
\cH^1_{\cX}(\cH^{i+1}(\Omega^{\bullet}_{\cP})) 
\os{\cH^i(\psi_2^{\bullet})}{\lra} 
\cH^{i+1}(\Omega^{\bullet}_{\cP})[1] $$
and the pushforward map 
$$ 
\iota_{0,*}: 
\iota_{0,*}W_n\Omega_{X}^i 
\os{\psi_{0,1}^i}{\lra} 
\cH_X^1(W_n\Omega^{i+1}_P) 
\os{\psi_{0,2}^i}{\lra} W_n\Omega^{i+1}_P[1] $$
via the isomorphism \eqref{eq:ir} and the corresponding isomorphism 
\begin{equation}\label{eq:irp}
\cH^i(\Omega^{\bullet}_{\cP/W_n}) \cong 
R^iu_{P,*} \cO_{P/W_n} \cong 
W\Omega^i_{P}
\end{equation}
for $P$. (Here, as before, $u_P:(P/W_n)_{\crys}^{\sim} \lra P_{\Zar}^{\sim}$ 
denotes the 
projection from the crystalline topos to the Zariski topos.) 
Moreover, since the compatibility of the maps $\cH^i(\psi_2^{\bullet})$, 
$\psi_{0,2}^i$ follows from the functoriality, we are reduced to proving 
the compatibility of the maps $\cH^i(\psi_1^{\bullet})$, 
$\psi_{0,1}^i$. 
To prove it, we first prove the following claim: 

\smallskip

\noindent
{\bf claim.} \, 
Let $\iota': \cX' \lra \cP'$ be another closed immersion of 
affine schemes over $W_n$ defined by a section $t'$ such that 
the mod $p$ reduction of $\iota', t'$ is equal to $\iota_0, \ol{t}$ 
respectively.  Also, let $\cD$ be the PD-envelope of 
$X$ in $\cX \times_{W_n} \cX'$ and let $\cD_P$ be the 
PD-envelope of $P$ in $\cP \times_{W_n} \cP'$. Then we have the 
following commutative diagram:
\begin{equation}\label{eq:comprci}
\xymatrix{
\iota_*\cH^i(\Omega^{\bullet}_{\cX/W_n}) 
\ar[d]^{\cong} \ar[r]^-{\cH^i(\psi_1^{\bullet})} 
& 
\cH^1_{\cX}(\cH^{i+1}(\Omega^{\bullet}_{\cP/W_n}))
\ar[d]^{\cong} 
\\ 
\iota_*\cH^i(\Omega^{\bullet}_{\cD/W_n}) 
& 
\cH^1_{\cD}(\cH^{i+1}(\Omega^{\bullet}_{\cD_P/W_n})) 
\\
\iota_*\cH^i(\Omega^{\bullet}_{\cX'/W_n}) 
\ar[u]^{\cong} \ar[r]^-{\cH^i({\psi_1^{\bullet}}')} 
& 
\cH^1_{\cX'}(\cH^{i+1}(\Omega^{\bullet}_{\cP'/W_n}))
\ar[u]^{\cong} 
}
\end{equation}
%
Here, $\Omega^{\bullet}_{\cD/W_n}$, 
$\Omega^{\bullet}_{\cD_P/W_n}$ denote the PD-de Rham complex 
for $\cD, \cD_P$ respectively, 
${\psi_1^{\bullet}}'$ is the map 
$\psi_j^{\bullet}$ for $\cX' \hookrightarrow \cP'$ and 
the vertical isomorphisms are 
induced by the projections $\cD \lra \cX, \cD \lra \cX', 
\cD_P \lra \cP, \cD_P \lra \cP'$. 

\begin{proof}[Proof of claim]
An element 
$[\alpha] \in \cH^i(\Omega^{\bullet}_{\cX/W_n})$ is sent by 
$\cH^i(\psi_1^{\bullet})$ to 
$[\alpha \wedge \dlog t] \in \cH^1_{\cX}(\cH^{i+1}(\Omega^{\bullet}_{\cP/W_n}))  = \cH^{i+1}(\Omega^{\bullet}_{\cU/W_n}/\Omega^{\bullet}_{\cP/W_n})$ 
(where $\cU = \cP \setminus \cX$), and 
$[\alpha'] \in \cH^i(\Omega^{\bullet}_{\cX'/W_n})$ is sent by 
$\cH^i({\psi_1^{\bullet}}')$ to 
$[\alpha' \wedge \dlog t'] \in \cH^1_{\cX}(\cH^{i+1}(\Omega^{\bullet}_{\cP'/W_n}))  = \cH^{i+1}(\Omega^{\bullet}_{\cU'/W_n}/\Omega^{\bullet}_{\cP'/W_n})$ 
(where $\cU' = \cP' \setminus \cX'$). 
So we need to prove that, if $[\alpha]$ and $[\alpha']$ agree in 
$\cH^i(\Omega^{\bullet}_{\cD/W_n})$, 
$[\alpha \wedge \dlog t]$ and $[\alpha' \wedge \dlog t']$ agree 
in 
$\cH^1_{\cX}(\cH^{i+1}(\Omega^{\bullet}_{\cD_P/W_n}))  = \cH^{i+1}(\Omega^{\bullet}_{\cU''/W_n}/\Omega^{\bullet}_{\cD_P/W_n})$, where 
$\cU'' = \cD_P \setminus \cD$. If we put 
$\alpha' - \alpha = d \beta$ in $\Omega^i_{\cD/W_n}$, 
\begin{equation}\label{eq:compclaim1}
\alpha \wedge \dlog t - \alpha' \wedge \dlog t = 
d\beta \wedge \dlog t = d(\beta \wedge \dlog t)
\end{equation}
in $\Omega^{i+1}_{\cU''/W_n}/\Omega^{i+1}_{\cP_D/W_n}$. 
Also, if we put 
$ f = \sum_{n=1}^{\infty} (n-1)!(t/t' - 1)^{[n]} \in 
\cO_{\cU''}$, $df = \dlog t - \dlog t'$ in 
$\Omega^1_{\cU''/W_n}$, and so 
\begin{equation}\label{eq:compclaim2}
\alpha \wedge \dlog t - \alpha \wedge \dlog t' 
= \alpha \wedge df = d(\alpha \wedge f)
\end{equation}
(for the last equality, we used the equality $d\alpha=0$). 
By combining \eqref{eq:compclaim1}, \eqref{eq:compclaim2}, 
we get the equality 
$[\alpha \wedge \dlog t] = [\alpha' \wedge \dlog t']$ 
as required. 
\end{proof}

Note that the first isomorphism in \eqref{eq:ir} for different 
lifts $\cX, \cX'$ of $X$ is related by the commutative diagram
\[ 
\xymatrix{
\cH^i(\Omega^{\bullet}_{\cX/W_n}) 
\ar[d]^{\cong} \\
\cH^i(\Omega^{\bullet}_{\cD/W_n}) & 
Ru_{X.*}\cO_{X/W_n}, \ar[lu]^{\cong} \ar[l]^{\cong} \ar[ld]^{\cong} \\
\cH^i(\Omega^{\bullet}_{\cX'/W_n}) \ar[u]^{\cong}
}\]
where $\cD$ is as in the claim. The same property holds also for 
the lifts $\cP, \cP'$ of $P$. 
So, by the above claim, 
when we would like to prove the required compatibility, 
we may replace the morphism 
$\cX \hookrightarrow \cP$ by another regular closed immersion 
which lifts the morphism $X \hookrightarrow P$. Then, since we may work 
locally, we may assume that there exists a lift of Frobenius 
$\varphi: \cP \lra \cP$ on $\cP$ with $\varphi(t)=t^p$. 
In this case, the isomorphism \eqref{eq:irp} is the composite 
\begin{equation}\label{eq:pppp}
\cH^i(\Omega^{\bullet}_{\cP/W_n}) \os{\cong}{\lra} 
\cH^i(W_n\Omega^{\bullet}_{P})
\os{\cong}{\lla} W_n\Omega^i_P, 
\end{equation}
where the first isomorphism is induced by 
$\cO_{\cP} \lra W_n\cO_P$ satisfying $t \mapsto \ul{\ol{t}}$ with 
$\ul{\ol{t}}$ the Teichm\"uller lift of the mod 
$p$ reduction $\ol{t}$ of $t$, and the second isomorphism is 
the higher Cartier inverse isomorphism of Illusie-Raynaud \cite[III Prop.~1.4]{IR83}. Then we see that the isomorphism \eqref{eq:pppp}, with $i=1$ and 
$P,\cP$ replaced by $U = P \setminus X, \cU = \cP \setminus \cX$, 
sends $[\dlog t]$ to $[\dlog \ul{\ol{t}}]$. Hence, by the description of 
the maps $\cH^i(\psi_1^{\bullet})$, $\psi_{0,1}^i$ given in (1), (1)' above, 
we conclude that they are compatible, as required. 
\end{proof}


\begin{thebibliography}{99}




\bibitem{BS19}
F.~Binda and S.~Saito, 
{\it Relative cycles with moduli and regulator maps}, 
J. Inst. Math. Jussieu {\bf 18}(2019), 1233--1293. 

\bibitem{CR11} 
A.~Chatzistamatiou and K.~R\"ulling, 
{\it Higher direct images of the structure sheaf in positive characteristic}, 
Algebra and Number Theory {\bf 5}(2011), No.~6, 693--775. 

\bibitem{CR12}
A.~Chatzistamatiou and K.~R\"ulling, 
{\it Hodge-Witt cohomology and Witt-rational singularities}, 
Documenta Math. {\bf 17}(2012), 663--781. 







\bibitem{C00}
B.~Conrad, {\it Grothendieck duality and base change}, 
Lecture Notes in Math. {\bf 1750}, Springer, Berlin, 2000.

\bibitem{C11}
B.~Conrad, {\it Clarifications and corrections for 
Grothendieck duality and base change}. 



\bibitem{G85}
M.~Gros, 
{\it Class de Chern et classes de cycles en cohomologie de 
Hodge-Witt logarithmique}, 
M\'emoires de la S.~M.~F.~ {\bf 21}(1985), 1--87. 

\bibitem{H66}
R.~Hartshorne, 
{\it Residues and duality}, 
Lecture Notes in Math. {\bf 20}, Springer, Berlin, 1966. 

\bibitem{HK94}
O.~Hyodo and K.~Kato, 
{\it Semistable reduction and crystalline cohmology with logarithmic poles}, 
P\'eriodes $p$-adiques (Bures-sur-Yvette 1988), Ast\'erisque 
{\bf 223}(1994), 221--268. 

\bibitem{I79}
L.~Illusie, 
{\it Complexe de de Rham-Witt et cohomologie cristalline}, 
Ann. Sci. E.N.S. {\bf 12}(1979), No.~4, 501--661. 

\bibitem{IR83}
L.~Illusie and M.~Raynaud, 
{\it Les suites spectrales associ\'ees au complexe de de Rham-Witt}, 
Pub. Math. I.H.E.S. {\bf 57}(1983), 73--212. 

\bibitem{IY22}
F.~Ivorra and T.~Yamazaki, 
{\it Mixed Hodge structure with modulus}, 
J. Inst. Math. Jussieu {\bf 21}(2022), 161--195. 

\bibitem{KMSY21a}
B.~Kahn, H.~Miyazaki, S.~Saito and T.~Yamazaki, 
{\it Motives with modulus I: Modulus sheaves with transfers for non-proper modulus pairs}, \'Epijournal G\'eom. Alg\'ebrique {\bf 5}(2021). 

\bibitem{KMSY21b}
B.~Kahn, H.~Miyazaki, S.~Saito and T.~Yamazaki, 
{\it Motives with modulus II: Modulus sheaves with transfers for proper modulus pairs}, \'Epijournal G\'eom. Alg\'ebrique {\bf 5}(2021). 

\bibitem{KMSY22} 
B.~Kahn, H.~Miyazaki, S.~Saito and T.~Yamazaki, 
{\it Motives with modulus III: The category of motives}, 
Annals of $K$-theory {\bf 7.1}(2022), 119--178. 



%
%
%

\bibitem{KSY16}
B.~Kahn, S.~Saito and T.~Yamazaki, 
{\it Reciprocity sheaves (with two appendices by Kay R\"ulling)}, 
Compositio Math. {\bf 152}(2016), No.~9, 1851--1898. 

\bibitem{KM23a}
S.~Kelly and H.~Miyazaki, 
{\it Hodge cohomology with a ramification filtration I}, 
Math. Z. {\bf 305} 305:70. 

\bibitem{KM23b}
S.~Kelly and H.~Miyazaki, 
{\it Hodge cohomology with a ramification filtration II}, 
arXiv:2306.06864v1

\bibitem{K23}
J.~Koizumi, 
{\it Blow-up invariance of cohomology theories with modulus}, 
arXiv:2306.14803v2


\bibitem{M93}
A.~Mokrane, 
{\it La suite spectrale des poids en cohomologie de Hyodo-Kato}, 
Duke Math. J. {\bf 72}(1993), 301--337. 

\bibitem{N05}
Y.~Nakkajima, 
{\it $p$-adic weight spectral sequences of log varieties}, 
J.~Math.~Sci.~Univ.~Tokyo {\bf 12}(2005), 513--661. 

\bibitem{NS08} 
Y.~Nakkajima and A.~Shiho, 
{\it Weight filtrations and log crystalline cohomologies of families of open smooth varieties}, Lecture Note in Math. {\bf 1959}, Springer Verlag. 

\bibitem{T99}
T.~Tsuji, 
{\it Poincar\'e duality for logarithmic crystalline cohomology}, 
Compositio Math.~{\bf 118}(1999), 11--41. 

%
%
%
%
%
%
%
%



\end{thebibliography}
\end{document}